\numberwithin{equation}{section}
\theoremstyle{definition}
\numberwithin{equation}{section}
\newcommand{\ncom}{\newcommand}
\ncom{\beq}{\begin{equation}}
	\ncom{\eeq}{\end{equation}}
\ncom{\bea}{\begin{eqnarray*}}
	\ncom{\eea}{\end{eqnarray*}}
\ncom{\beqa}{\begin{eqnarray}}
	\ncom{\eeqa}{\end{eqnarray}}
\ncom{\nno}{\nonumber}
\ncom{\non}{\nonumber}
\ncom{\ds}{\displaystyle}
\ncom{\half}{\frac{1}{2}}
\ncom{\mbx}{\makebox{.25cm}}
\ncom{\hs}{\mbox{\hspace{.25cm}}}
\ncom{\rar}{\rightarrow}
\ncom{\Rar}{\Rightarrow}
\ncom{\noin}{\noindent}
\ncom{\bc}{\begin{center}}
	\ncom{\ec}{\end{center}}
\ncom{\sz}{\scriptsize}
\ncom{\rf}{\ref}
\ncom{\s}{\sqrt{2}}
\ncom{\sgm}{\sigma}
\ncom{\Sgm}{\Sigma}
\ncom{\psgm}{\sigma^{\prime}}
\ncom{\dt}{\delta}
\ncom{\Dt}{\Delta}
\ncom{\lmd}{\lambda}
\ncom{\Lmd}{\Lambda}
\ncom{\Th}{\Theta}
\ncom{\e}{\eta}
\ncom{\eps}{\epsilon}
\ncom{\pcc}{\stackrel{P}{>}}
\ncom{\lp}{\stackrel{L_{p}}{>}}
\ncom{\dist}{{\rm\,dist}}
\ncom{\sspan}{{\rm\,span}}
\ncom{\re}{{\rm Re\,}}
\ncom{\im}{{\rm Im\,}}
\ncom{\sgn}{{\rm sgn\,}}
\ncom{\ba}{\begin{array}}
	\ncom{\ea}{\end{array}}
\ncom{\hone}{\mbox{\hspace{1em}}}
\ncom{\htwo}{\mbox{\hspace{2em}}}
\ncom{\hthree}{\mbox{\hspace{3em}}}
\ncom{\hfour}{\mbox{\hspace{4em}}}
\ncom{\vone}{\vskip 2ex}
\ncom{\vtwo}{\vskip 4ex}
\ncom{\vonee}{\vskip 1.5ex}
\ncom{\vthree}{\vskip 6ex}
\ncom{\vfour}{\vspace*{8ex}}
\ncom{\norm}{\|\;\;\|}
\ncom{\integ}[4]{\int_{#1}^{#2}\,{#3}\,d{#4}}
\ncom{\vspan}[1]{{{\rm\,span}\{ #1 \}}}
\ncom{\dm}[1]{ {\displaystyle{#1} } }
\ncom{\ri}[1]{{#1} \index{#1}}
\newtheorem{theorem}{\bf Theorem}[section]
\newtheorem{remark}{\bf Remark}[section]
\newtheorem{proposition}{Proposition}[section]
\newtheorem{corollary}{Corollary}[section]
\newtheoremstyle
{remarkstyle}
{}
{11pt}
{}
{}
{\bfseries}
{:}
{     }
{\thmname{#1} \thmnumber{#2} }
\theoremstyle{remarkstyle}
\def\l{{\langle}}
\def\r{\rangle}
\def\eps{\varepsilon}
\begin{document}
	\title{On the multivariate generalized  counting process and its time-changed variants}
	\author[Kuldeep Kumar Kataria]{Kuldeep Kumar Kataria}
	\address{Kuldeep Kumar Kataria, Department of Mathematics, Indian Institute of Technology Bhilai, Durg 491002, India.}
	\email{kuldeepk@iitbhilai.ac.in}
	\author[Manisha Dhillon]{Manisha Dhillon}
	\address{Manisha Dhillon, Department of Mathematics, Indian Institute of Technology Bhilai, Durg 491002, India.}
	\email{manishadh@iitbhilai.ac.in}
	\subjclass[2010]{Primary: 60G22; 60G52; Secondary: 26A33; 33E12}
	\keywords{Generalized counting process; multivariate counting process; random time-change; Bern\v{s}tein function; L\'evy measure.}
	\date{\today}
	\begin{abstract}
	In this paper, we study a multivariate version of the generalized counting process (GCP) and discuss its various time-changed variants. The time is changed using random processes such as the stable subordinator, inverse stable subordinator, and their composition, tempered stable subordinator, gamma subordinator \textit{etc.}
Several distributional properties that include the probability generating function, probability mass function and their governing differential equations are obtained for these variants. It is shown that some of these time-changed processes are L\'evy and  for such processes we have derived the associated L\'evy measure. The explicit expressions for the covariance and codifference of the component processes for some of these time-changed variants are obtained. 
An application of the multivariate generalized space fractional counting process to shock models is discussed.
\end{abstract}
	\maketitle
	\section{Introduction}
	The Poisson process is an important counting process which has many applications in various fields. It's a pure birth process with a possibility of atmost one arrival in an infinitesimal interval. Recently, a generalization of this process, namely, the generalized counting process (GCP) is introduced and studied by Di Crescenzo \textit{et al.} (2016). In GCP, which we denote by $\{M(t)\}_{t\geq0}$, there is a possibility of $k$ kinds of jumps of size $1,2,\dots,k$ with positive rates $\lambda_1,\lambda_2,\dots,\lambda_k$, respectively. Its state probabilities $p(n,t)=\mathrm{Pr}\{M(t)=n\}$ are given by (see Di Crescenzo \textit{et al.} (2016))
	\begin{equation*}
		p(n,t)=\sum_{\Omega(k,n)}\prod_{j=1}^{k}\frac{(\lambda_jt)^{x_j}}{x_j!}e^{-\lambda_j t}, \, \, n\geq0,
	\end{equation*}
	where $\Omega(k,n)=\{(x_1,x_2,\ldots,x_k):\sum_{j=1}^{k}jx_j=n,\ x_j\in \mathbb{N}_0\}$. Its L\'evy measure and probability generating function (pgf) are given by (see Kataria and Khandakar (2022a)) $
	\Pi(\mathrm{d}s) = \sum_{j=1}^{k}\lambda_j\delta_j\mathrm{d}s
	$ and 
	\begin{equation}\label{pgfgcp}
		G(u,t)=\mathbb{E}\left(u^{M(t)}\right)=\exp\bigg(-t\sum_{j=1}^{k}\lambda_{j}(1-u^{j})\bigg),\  \ |u|\le 1,
	\end{equation}
respectively. Here, $\delta_j$'s are Dirac measure.
	
	Let $\{X_i\}_{i\ge1}$ be a sequence of independent and identically distributed (iid) random variables with the following probability mass function (pmf):
	\begin{equation*}
		\mathrm{Pr}\{X_1=j\}=\frac{\lambda_j}{\Lambda},\, j=1,2,\dots,k,
	\end{equation*}
	where $\Lambda=\lambda_1+\lambda_2+\dots+\lambda_k$.
	Di Crescenzo \textit{et al.} (2016) showed that the GCP is equal in distribution to a
	compound Poisson process, that is,
	\begin{equation}\label{ccc}
		M(t)\overset{d}{=}\sum_{i=1}^{N(t)}X_i,
	\end{equation}
	where the sequence $\{X_i\}_{i\ge1}$ is independent of the Poisson process $\{N(t)\}_{t\ge0}$ with parameter $\Lambda$. Here, $\overset{d}{=}$ denotes equal in distribution.
	
	 It is well known that the Poisson process has light-tailed distributed waiting times. This limits its applications to the phenomenon involving long memory. To overcome this limitation,
	 several authors have introduced and studied various time-changed generalizations of the Poisson process. These generalized processes are characterized by heavy-tailed distributions of their waiting times. Generally, either the time is changed by a stable subordinator in which case the time-changed Poisson process is termed as the space fractional Poisson
	 process or the first passage time of stable subordinator is used as a time-changed process in which case it is termed as the time fractional Poisson process, for more details, we refer the reader to Mainardi \textit{et al.} (2004), Beghin and Orsingher (2009), Orsingher and Polito (2012) \textit{etc.}
	
Di Crescenzo {\it et al.} (2016) studied a time-changed variant of the GCP known as the generalized fractional counting process (GFCP). Here, we denote it by $\{M^{\beta}(t)\}_{t\ge0}$, $0<\beta\le 1$. Its pmf $p^{\beta}(n,t)=\mathrm{Pr}\{M^{\beta}(t)=n\}$ solves the following system of differential equations:
	\begin{equation*}
		\frac{\mathrm{d}^{\beta}}{\mathrm{d}t^{\beta}}p^{\beta}(n,t)=-\sum_{j=1}^{k}\lambda_jp^{\beta}(n,t)+	\sum_{j=1}^{\min\{n,k\}}\lambda_{j}p^{\beta}(n-j,t),\ \ n\ge0,
	\end{equation*}
	with the initial condition $p^{\beta}(n,0)=\mathbb{I}_{\{n=0\}}$,
	where $\mathbb{I}_{\{\cdot\}}$ is the indicator function and $\dfrac{\mathrm{d}^{\beta}}{\mathrm{d}t^{\beta}}$ is the Caputo fractional derivative defined as (see Kilbas {\it et al.} (2006))
	\begin{equation}\label{caputo}
		\frac{\mathrm{d}^{\beta}}{\mathrm{d}t^{\beta}}f(t)=\left\{
		\begin{array}{ll}
			\dfrac{1}{\Gamma{(1-\beta)}}\displaystyle\int^t_{0} (t-s)^{-\beta}f'(s)\,\mathrm{d}s,\ \ 0<\beta<1,\\\\
			f'(t),\ \ \beta=1.
		\end{array}
		\right.
	\end{equation}
 Its  mean and variance are given by (see 	Di Crescenzo {\it et al.} (2016))
\begin{equation}\label{meanvargfcp}
	\mathbb{E}(M^\beta (t))=\frac{\sum_{j=1}^{k}j\lambda_jt^\beta}{\Gamma(\beta+1)}
	\end{equation}
	and
\begin{equation}\label{meanvargfcpvv}	 \operatorname{Var}(M^\beta(t))=\bigg(\sum_{j=1}^{k}j\lambda_jt^\beta\bigg)^2\left(\frac{2}{\Gamma(2\beta+1)}-\frac{1}{\Gamma^2(\beta+1)}\right)+\frac{\sum_{j=1}^{k}j^2\lambda_jt^\beta}{\Gamma(\beta+1)}.
\end{equation}

 For $\beta=1$, the GFCP reduces to the GCP. Further, for $k=1$, the GFCP and the GCP reduces to the time fractional Poisson process (see Beghin and Orsingher (2009), Meerschaert {\it et al.} (2011)) and the homogeneous Poisson process, respectively. Kataria and Khandakar ((2022a), (2022b)) have shown that various counting processes, namely, the Poisson process of order $k$ (see Kostadinova and Minkova (2013)), P\'olya-Aeppli process, P\'olya-Aeppli process of order $k$ (see  Chukova and Minkova (2015)), Poisson-logarthmic process (see Sendova and Minkova (2018)), Bell-Touchard process (see Freud and Rodriguez (2022)), {\it etc}. and their time-changed variants are particular cases of the GFCP.
 For martingale characterizations of GCP and its time-changed variants, we refer the readers to Dhillon and Kataria (2025). 
 
  Cha and Giorgio (2018) introduced and studied a new class of bivariate counting processes having marginal regularity property. Moreover, they discussed an application of this process to a shock model. Recently, an application of a bivariate tempered space-fractional Poisson process to shock models is given by Soni \textit{et al.} (2024).
  Multivariate data are frequently used in ecology, biology, climate study, reliability theory and related areas. In the last decade, the multivariate Poisson process has been an important topic of study for many researchers, for example, Beghin and Macci ((2016), (2017)), Di Crescenzo and Meoli (2022) \textit{etc.} Beghin and Macci (2016) studied a time-changed multivariate Poisson process, namely, the multivariate space-time fractional Poisson process. 
  For more details on time-changed multivariate random processes, we refer the reader to Barndorff \textit{et al.} (2001),  Beghin \textit{et al.} (2020). 
  
Here, we introduce a multivariate GCP of dimension $q\ge1$ and study various time-changed variants of it. For $q=1$, we get the GCP and its corresponding time-changed variants. The paper is organized as follows:

In Section 2, we set some notations that are used throughout the paper. Here, we give few definitions and known results about some special functions and subordinators. In Section 3, we study the multivariate generalized counting process (MGCP). Its state probabilities, pgf and L\'evy measure are obtained. Subsequently, we study some time-changed variants of it where the time is changed via various random processes, for example, stable subordinator, inverse stable subordinator, composition of stable and inverse stable subordinator, tempered stable subordinator, gamma subordinator \textit{etc.} For all these time-changed processes, the corresponding pgf, pmf and their governing system of differential equations are obtained. It is shown that some of these time-changed processes are L\'evy, and for such processes the associated L\'evy measures are derived. For some cases, we compute the covariance and codifference of the component processes. In Section 4, we discuss a more general time-changed variant of the MGCP where the random time-change component is a subordinator associated to some Bern\v{s}tein function. We derive an explicit expression for its transition probabilities and obtain its pgf, pmf along with their governing system of differential equations. It is shown that for particular choices of Bern\v{s}tein function, this time-changed variant of MGCP reduces to the time-changed processes that are discussed in Section 3. Later, we discuss an application of the multivariate generalized space fractional counting process to a shock model.  Several results such as the reliability function, hazard rates, failure density, and the probability that failure occurs due to a particular type of shock are obtained.
%We establish that   are special cases of it, which can be obtained for particular choice of Bernstein function.

\section{Preliminaries}
First, we set some notations which will be used throughout the paper. Then, we give some definitions and known results for some special functions, stable subordinator, inverse stable subordinator, tempered stable subordinator, gamma subordinator and inverse Gaussian subordinator.

Throughout the paper, we will be using the following notations:
Let $\mathbb{R}$ denote the set of real numbers, $\mathbb{N}_{0}=\{0,1,2,\dots\}$ denote the set of non-negative integers and $\bar{n}=(n_1,n_2,\dots,n_q)$, $\bar{0}=(0,0,\dots,0)$, $\bar{1}=(1,1,\dots,1)$ be $q$-tuple vectors. By $\bar{n}\ge\bar{m}$, we mean $n_i\ge m_i$ for each $i=1,2,\dots,q$. Moreover, $\bar{n}\succ \bar{m}$ ( $\bar{n}\prec\bar{m}$) denotes that $n_i\ge m_i$ ($n_i\leq m_i$) for all $i=1,2,\dots,q$ and $\bar{n}\neq \bar{m}$. 

\subsection{Some special functions}
Here, we give definitions of some special functions.
\subsubsection{Mittag-Leffler function}
The three-parameter Mittag-Leffler function is defined as
(see Kilbas \textit{et al.} (2006), p. 45)
\begin{equation}\label{Mitagdef}
	E_{\alpha,\,\beta}^{\gamma}(x)\coloneqq\sum_{r=0}^{\infty}\frac{(\gamma)_r\,x^r}{r!\Gamma(r\alpha+\beta)},\,\, x\in\mathbb{R},
\end{equation}
where $\alpha>0$, $\beta>0$, $\gamma>0$, $(\gamma)_0=1$ and  $(\gamma)_r=(\gamma)(\gamma+1)\dots(\gamma+r-1)$ is the Pochhammer symbol.

For $\gamma=1$, it reduces to the two parameter Mittag-Leffler function. For $\beta=\gamma=1$, it reduces to Mittag-Leffler function. Further, for $\alpha=\beta=\gamma=1$, we get the exponential function that is $E_{1,1}^{1}(x)=e^x$.
\subsubsection{Generalized Wright function}
%The Wright function is defined as:
%\begin{equation*}
%	W_{\alpha,\beta}(x)=\sum_{r=0}^{\infty}\frac{x^r}{r!\Gamma(\alpha+\beta r)}.
%\end{equation*}
The generalized Wright function is defined as (see Kilbas \textit{ et al.} (2006), Eq. (1.11.14))
\begin{equation}\label{wrightfn}
	{}_p\Psi_q\left[\begin{matrix}
		(a_i,\alpha_i)_{1,p}\\
		(b_j,\beta_j)_{1,q}
	\end{matrix}\bigg| x\right]=\sum_{r=0}^{\infty}\frac{\prod_{i=1}^{p}\Gamma(a_i+\alpha_ir)}{\prod_{j=1}^{q}\Gamma(b_j+\beta_jr)}\frac{x^r}{r!},\, \, x\in \mathbb{R},
\end{equation}
where $a_i$'s, $b_j$'s, $\alpha_i$'s and $\beta_j$'s are real numbers.
\subsubsection{Incomplete beta function}
	The incomplete beta function is defined as (see Gradshteyn and Ryzhik (2007), p. 910)
	\begin{equation}\label{defbeta}
		B(x,p,q)=\int_{0}^{x}t^{p-1}(1-t)^{q-1}\, \mathrm{d}t,\, 0< x<1,
	\end{equation}
	where $p>0,\, q>0.$
	\subsubsection{Generalized incomplete gamma function}
	The generalized incomplete gamma function is defined as 
	\begin{equation}\label{defgama}
		\gamma(x;p,q)=\int_{p}^{q}e^{-t}t^{x-1}\mathrm{d}t,\,x>0, 
	\end{equation}
	where $0\le p < q \le 1.$
\subsection{Subordinator}\label{seci}
A subordinator $\{D(t)\}_{t\ge0}$ is a one dimensional non-decreasing L\'evy process which is characterized by the following Laplace transform (see Applebaum (2009)):
\begin{equation*}
\mathbb{E}(e^{-sD(t)})=e^{-t\phi(s)},\, s>0,
\end{equation*}
where $\phi(s)=bs+f(s)$ is the Laplace exponent. Here, $b\ge0$ is the drift and 
\begin{equation*}
	f(s)=\int_{0}^{\infty}(1-e^{-sx})\,\mu_{D}(\mathrm{d}x)
\end{equation*} 
is the Bern\v{s}tein function with L\'evy measure $\mu_D$. So, it satisfies $\mu_{D}([0,\infty))=\infty$ and $\int_{0}^{\infty}(1\wedge x)\,\mu_{D}(\mathrm{d}x)<\infty$, where $x\wedge y=\min\{x,y\}$.

\subsubsection{Stable subordinator and its inverse} An $\alpha$-stable subordinator $\{D_\alpha(t)\}_{t\ge0}$, $0<\alpha<1$ is a driftless subordinator whose Laplace exponent is given by $\phi(s)=s^\alpha$, $s>0$. So, $\mathbb{E}(e^{-sD_\alpha(t)})=e^{-ts^\alpha}$. Its L\'evy measure is given by
  \begin{equation}\label{lmeasalpha}
  \mu_{D_\alpha}(\mathrm{d}s)=\frac{\alpha}{\Gamma(1-\alpha)}s^{-\alpha-1}\mathrm{d}s.
  \end{equation} 
 Its density function $h_{D_\alpha(t)}(x,t)\mathrm{d}x=\mathrm{Pr}\{D_\alpha(t)\in\mathrm{d}x\}$ is given by
 \begin{equation*}
 h_{D_{\alpha}(t)}(x,t)\mathrm{d}x=\frac{1}{\pi}\sum_{n=1}^{\infty}(-1)^{n+1}\frac{\Gamma(\alpha n+1)}{n!}\frac{t^n}{y^{\alpha n+1}}\sin(\pi\alpha n)\mathrm{d}x,\ \ x>0.
 \end{equation*} 
Moreover, its joint density function is defined as
\begin{equation*}
h_{D_{\alpha}(t_1),D_{\alpha}(t_2)}(x_1,t_1;x_2,t_2)\mathrm{d}x_1\,\mathrm{d}x_2=\mathrm{Pr}\{D_{\alpha}(t_1)\in\mathrm{d}x_1, D_{\alpha}(t_2)\in\mathrm{d}x_2\}
\end{equation*}
and it satisfies (see Di Crescenzo and Meoli (2022), Eq. (3.2))
\begin{equation}\label{eqjoalpha}
h_{D_{\alpha}(t_1),D_{\alpha}(t_2)}(x_1,t_1;x_2,t_2)\mathrm{d}x_1\,\mathrm{d}x_2=h_{D_\alpha(t_2-t_1)}(x_2-x_2,t_2-t_1)h_{D_{\alpha}(t_1)}(x_1,t_1)\mathrm{d}x_1\, \mathrm{d}x_2.
\end{equation}
The first passage time process of an $\alpha$-stable subordinator $\{D_\alpha(t)\}_{t\ge0}$ is known as the inverse $\alpha$-stable subordinator $\{Y_\alpha(t)\}_{t\ge0}$. It is defined as follows:
\begin{equation*}
Y_\alpha(t)=\inf\{s>0:D_\alpha(s)>t\}
\end{equation*}
and its Laplace transform is given by $\mathbb{E}(e^{-sY_\alpha(t)})=E_{\alpha,1}(-st^\alpha)$. 

From Eq. (2.4) and Eq. (2.7) of Piryatinska  \textit{et al.} (2005), the $n$th moment of $\{Y_\alpha(t)\}_{t\ge0}$ is given by
\begin{equation}\label{beghin}
	\mathbb{E}(Y_\alpha^n(t))=\frac{n!t^{n\alpha}}{\Gamma(n\alpha+1)},\, n\ge0.
\end{equation} 
For an alternate proof of \eqref{beghin} using Mellin transform, we refer the reader to Remark 4.3 of Kataria and Vellaisamy (2018). 
\subsubsection{Tempered stable subordinator}\label{sectempre}
Let $0<\alpha<1$, $\theta>0$. The Laplace exponent of tempered stable subordinator $\{D_{\alpha,\theta}(t)\}_{t\ge0}$  is  $\Psi(s)=(s+\theta)^\alpha-\theta^\alpha$, $s>0$ (see Meerschaert \textit{et al.} (2013)). Thus,
\begin{equation}\label{temppp}
\mathbb{E}(e^{-sD_{\alpha,\theta}(t)})=e^{-t\Psi(s)}.
\end{equation}

\subsubsection{Gamma subordinator}
The density function of gamma subordinator $\{G_{a,b}(t)\}_{t\ge0}$ with parameters $a>0$, $b>0$ is given by (see Applebaum (2009), p. 55)
\begin{equation}\label{gammadensity}
h(x,t)=\frac{a^{bt}}{\Gamma(bt)}x^{bt-1}e^{-ax},\,\, x>0
\end{equation}
and its Laplace transform is given by
\begin{equation}\label{gammaberfn}
\mathbb{E}(e^{-sG_{a,b}(t)})=\exp(-bt\log(1+s/a)),\,\, s\ge0.
\end{equation}
Let $e^{c\partial_t}$ be a shift operator such that 
\begin{equation}\label{gammaoper}
	e^{c\partial_t}f(t)\coloneqq\sum_{n=0}^{\infty}\frac{(c\partial_t)^n}{n!}f(t)=f(t+c),\,\, c\in\mathbb{R},
\end{equation}
where $\partial_t=\frac{\partial}{\partial t}$ and $f:\mathbb{R}\to\mathbb{R}$ is an analytic function. Then, for $x\ge0,\, t\ge0$, the density function of gamma subordinator solves (see Beghin (2014), Lemma 2.1)
\begin{equation}\label{erkq}
\frac{\partial}{\partial x}h(x,t)=-b(1-e^{-\partial_t/a})h(x,t),
\end{equation}
such that $h(x,0)=\delta(x)$
and $\lim_{|x|\to+\infty}h(x,t)=0$.
Here, $\delta(x)$ is the Dirac delta function.

\subsubsection{Inverse Gaussian subordinator}
The density function $g(x,t)$ of an inverse Gaussian subordinator $\{I_{\delta,\gamma}(t)\}_{t\ge0}$, $\delta>0$, $\gamma>0$ is given by (see Applebaum (2009), Eq. (1.27))
\begin{equation}\label{gxtinv}
g(x,t)=\frac{\delta t}{\sqrt{2\pi x^3}}\exp\Big(\delta \gamma t-\frac{1}{2}(\delta^2t^2x^{-1}+\gamma^2x)\Big), \,\, t\ge0,\,x\ge0
\end{equation}
 and its Laplace transform is given by
\begin{equation}\label{lapinv}
\mathbb{E}(e^{-sI_{\delta,\gamma}(t)})=\exp(-\delta t(\sqrt{2s+\gamma^2}-\gamma)),\,\, t>0,\, s>0.
\end{equation}

\section{Multivariate GCP and its time-changed variants}\label{secc1}
In this section, we introduce a multivariate GCP and study its time-changed variants.

 For $i=1,2,\dots,q$, let $\{M_i(t)\}_{t\ge0}$ be a GCP which performs $k_i$ kinds of jumps of size $j_i$ with positive rate $\lambda_{ij_i}$, $j_i=1, 2, \dots, k_i$. We define a $q$-variate GCP $\{\bar{M}(t)\}_{t\geq0}$ as follows:
	\begin{equation*}\label{mgcp}
	\bar{M}(t)\coloneqq(M_1(t),M_2(t),\dots ,M_q(t))
	\end{equation*}
  whose transition probabilities are given by 
  \begin{equation}\label{transpb}
 	\mathrm{Pr}\{\bar{M}(t+h)=\bar{n}+\bar{j}|\bar{M}(t)=\bar{n}\}=\begin{cases}
 		\lambda_{ij_i}h+o(h),\ \bar{j}=\bar{\epsilon}_i^{j_i},\,1\leq j_i\leq k_i,\,1\leq i\leq q,\vspace{0.1cm}\\
 		1-\sum_{i=1}^{q}\sum_{j_i=1}^{k_i}\lambda_{ij_i} h+o(h),\ \bar{j}=\bar{0},\vspace{0.1cm}\\
 		o(h),\ \text{otherwise},
 	\end{cases}
 \end{equation}
where $o(h)/h\to0$ as $h\to0$. Here, $\bar{\epsilon}_i^{j_i}\in \mathbb{N}_{0}^q$ is a $q$-tuple vector whose $i^{th}$ entry is $j_i$ and other entries are zero. 
We call this $q$-variate GCP $\{\bar{M}(t)\}_{t\geq0}$ as the multivariate generalized counting process (MGCP). 

In an infinitesimal time interval of length $h$, there is a transition in MGCP iff there is a transition in exactly one of its component GCPs. That is, in such a time interval, the probability of simultaneous transitions in more than one component GCPs is negligible.
Note that the MGCP has independent and stationary increments because its component GCPs have independent and stationary increments.
  
Next, we obtain the system of differential equations that governs the state probabilities of MGCP.
\begin{proposition}
The state probabilities $p(\bar{n},t)=\mathrm{Pr}\{\bar{M}(t)=\bar{n}\}$, $\bar{n}\geq \bar{0}$ solve the following system of differential equations:
\begin{equation}\label{propmgcp}
\frac{\mathrm{d}}{\mathrm{d}t}p(\bar{n},t)=-\sum_{i=1}^{q}\sum_{j_i=1}^{k_i}\lambda_{ij_i}\big(p(\bar{n},t)-p(\bar{n}-\bar{\epsilon}_i^{j_i},t)\big)
\end{equation}
with initial conditions
\begin{equation*}
	p(\bar{n},0)=\begin{cases}
		1,\ \bar{n}=\bar{0},\\
		0,\ \bar{n}\ne\bar{0},
	\end{cases}
\end{equation*}
where $p(\bar{n}-\bar{m},t)=0$ for all $\bar{n}\prec\bar{m}$. 
\end{proposition}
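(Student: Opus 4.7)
The plan is to derive the claimed system as the forward Kolmogorov equations associated with the infinitesimal transition structure in \eqref{transpb}. First, I would fix $\bar{n}\in\mathbb{N}_{0}^{q}$ and apply the law of total probability in the form
\begin{equation*}
p(\bar{n},t+h)=\sum_{\bar{m}}\mathrm{Pr}\{\bar{M}(t+h)=\bar{n}\mid\bar{M}(t)=\bar{m}\}\,p(\bar{m},t).
\end{equation*}
Stationarity of the increments of $\{\bar{M}(t)\}_{t\ge0}$, which is noted just after its definition, reduces each conditional probability on the right-hand side to the infinitesimal transition probability over an interval of length $h$ given in \eqref{transpb}.

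With \eqref{transpb} in hand, only three classes of source states $\bar{m}$ contribute at order $h$ or larger: the stay-put case $\bar{m}=\bar{n}$, the single-jump cases $\bar{m}=\bar{n}-\bar{\epsilon}_{i}^{j_{i}}$ for $1\le i\le q$, $1\le j_{i}\le k_{i}$, and a lumped remainder of $o(h)$ from all other source states. Substituting the rates then yields
\begin{equation*}
p(\bar{n},t+h)=\Big(1-h\sum_{i=1}^{q}\sum_{j_{i}=1}^{k_{i}}\lambda_{ij_{i}}\Big)p(\bar{n},t)+h\sum_{i=1}^{q}\sum_{j_{i}=1}^{k_{i}}\lambda_{ij_{i}}\,p(\bar{n}-\bar{\epsilon}_{i}^{j_{i}},t)+o(h).
\end{equation*}
Subtracting $p(\bar{n},t)$, dividing by $h$, and letting $h\downarrow 0$ produces \eqref{propmgcp}. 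The convention $p(\bar{n}-\bar{m},t)=0$ for $\bar{n}\prec\bar{m}$ takes care of the boundary: whenever some $n_{i}<j_{i}$ the putative source state $\bar{n}-\bar{\epsilon}_{i}^{j_{i}}$ lies outside $\mathbb{N}_{0}^{q}$ and is unreachable, so its contribution must vanish. The initial condition $p(\bar{n},0)=\mathbb{I}_{\{\bar{n}=\bar{0}\}}$ is immediate from $\bar{M}(0)=\bar{0}$.

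I do not anticipate any substantive obstacle; the only care point is controlling the lumped remainder. That the total contribution of all source states outside the three distinguished classes is $o(h)$ follows because the probability of two or more jumps in $[t,t+h]$, whether within one component or across distinct components, is $o(h)$, a standard consequence of \eqref{transpb} applied componentwise together with the independence of the component GCPs.
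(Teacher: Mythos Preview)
Your proposal is correct and follows essentially the same approach as the paper: decompose $p(\bar{n},t+h)$ via the law of total probability, insert the transition rates \eqref{transpb} to isolate the stay-put and single-jump contributions plus an $o(h)$ remainder, form the difference quotient, and let $h\to 0$. Your write-up is in fact slightly more thorough in justifying the $o(h)$ remainder, the boundary convention, and the initial condition.
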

\begin{proof}
Note that
\begin{align*}
p(\bar{n},t+h)&=p(\bar{n},t)\mathrm{Pr}\{\bar{M}(t+h)=\bar{n}|\bar{M}(t)=\bar{n}\}\\
&\, \, +\sum_{i=1}^{q}\sum_{j_i=1}^{k_i}p(\bar{n}-\bar{\epsilon}_i^{j_i},t)\mathrm{Pr}\{\bar{M}(t+h)=\bar{n}|\bar{M}(t)=\bar{n}-\bar{\epsilon}_i^{j_i}\}+o(h).
\end{align*}
Using \eqref{transpb}, we get
\begin{equation*}
p(\bar{n},t+h)=p(\bar{n},t)\bigg(1-\sum_{i=1}^{q}\sum_{j_i=1}^{k_i}\lambda_{ij_i} h\bigg)+\sum_{i=1}^{q}\sum_{j_i=1}^{k_i}p(\bar{n}-\bar{\epsilon}_i^{j_i},t)\lambda_{ij_i}h+o(h).
\end{equation*}
Equivalently,
\begin{equation*}
\frac{p(\bar{n},t+h)-p(\bar{n},t)}{h}=-\sum_{i=1}^{q}\sum_{j_i=1}^{k_i}\lambda_{ij_i} p(\bar{n},t)+\sum_{i=1}^{q}\sum_{j_i=1}^{k_i}\lambda_{ij_i}p(\bar{n}-\bar{\epsilon}_i^{j_i},t)+\frac{o(h)}{h}.
\end{equation*}
On letting $h\to 0$, we obtain the required result.
\end{proof}

Let $\bar{u}=(u_1,u_2,\dots,u_q)$ such that $|u_i|\leq 1$ for all $i$. In the following result, we obtain the differential equation that governs the pgf 
	\begin{equation}\label{pgfdef}
	G(\bar{u},t)=\mathbb{E}\bigg(u_1^{M_1(t)}u_2^{M_2(t)}\dots u_q^{M_q(t)}\bigg)
	=\sum_{\substack{n_i\geq 0\\  i=1,2,\dots,q}}u_1^{n_1} u_2^{n_2}\dots u_q^{n_q}p(\bar{n},t)
\end{equation}
 of MGCP.
\begin{proposition}
		The pgf $G(\bar{u},t)$ of MGCP solves the following differential equation:
\begin{equation}\label{pgfde}
\frac{\partial}{\partial t}G(\bar{u},t)=-\sum_{i=1}^{q}\sum_{j_i=1}^{k_i}\lambda_{ij_i}(1-u_i^{j_i})G(\bar{u},t),\, \,G(\bar{u},0)=1.
\end{equation}
\end{proposition}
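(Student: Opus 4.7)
The plan is to derive the partial differential equation for $G(\bar{u},t)$ directly from the governing system \eqref{propmgcp} for the state probabilities $p(\bar{n},t)$, using the definition \eqref{pgfdef} of the pgf. The strategy is standard: multiply \eqref{propmgcp} through by $u_1^{n_1}u_2^{n_2}\cdots u_q^{n_q}$ and sum over all $\bar{n}\geq \bar{0}$, then simplify both sides.

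First I would handle the left-hand side. Since $|u_i|\leq 1$ and $\sum_{\bar{n}\geq \bar{0}} p(\bar{n},t)=1$, the series defining $G(\bar{u},t)$ converges absolutely and uniformly on compact $t$-intervals, as does its term-by-term $t$-derivative (the latter being bounded in absolute value by the series for $2\sum_{i,j_i}\lambda_{ij_i}\cdot 1$). This justifies interchanging the sum and $\partial/\partial t$, so the left-hand side becomes $\partial G(\bar{u},t)/\partial t$.

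Next I would treat the right-hand side term by term. The first group of terms sums to $-\bigl(\sum_{i=1}^{q}\sum_{j_i=1}^{k_i}\lambda_{ij_i}\bigr)G(\bar{u},t)$ by pulling the constants out of the sum over $\bar{n}$. For the second group I would perform an index shift in each inner term: for fixed $i$ and $j_i$,
\begin{equation*}
\sum_{\bar{n}\geq\bar{0}} u_1^{n_1}\cdots u_q^{n_q}\,p(\bar{n}-\bar{\epsilon}_i^{j_i},t) = u_i^{j_i}\sum_{\bar{m}\geq \bar{0}} u_1^{m_1}\cdots u_q^{m_q}\,p(\bar{m},t) = u_i^{j_i}\,G(\bar{u},t),
\end{equation*}
where I use the convention $p(\bar{n}-\bar{m},t)=0$ for $\bar{n}\prec\bar{m}$ to discard the terms with negative indices before substituting $\bar{m}=\bar{n}-\bar{\epsilon}_i^{j_i}$. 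Combining the two groups produces the claimed factor $-\sum_{i,j_i}\lambda_{ij_i}(1-u_i^{j_i})$ multiplying $G(\bar{u},t)$.

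Finally, the initial condition $G(\bar{u},0)=1$ follows immediately from \eqref{pgfdef} using $p(\bar{0},0)=1$ and $p(\bar{n},0)=0$ for $\bar{n}\neq \bar{0}$. I do not expect any real obstacle here; the only delicate point is justifying the interchange of summation with differentiation and carefully performing the index shift while respecting the convention on negative indices, both of which are routine.
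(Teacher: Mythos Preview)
Your proposal is correct and follows essentially the same approach as the paper: multiply the governing system \eqref{propmgcp} by $u_1^{n_1}\cdots u_q^{n_q}$, sum over $\bar{n}\geq\bar{0}$, and perform the index shift on the $p(\bar{n}-\bar{\epsilon}_i^{j_i},t)$ terms. The paper's proof is terser and does not explicitly justify the interchange of sum and derivative or spell out the initial condition, so your version is, if anything, more complete.
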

\begin{proof}
On multiplying  $u_1^{n_1}u_2^{n_2}\dots u_q^{n_q}$ on both sides of \eqref{propmgcp}, we get
	{\small\begin{align*}
\bigg(\prod_{l=1}^{q}u_l^{n_l}\bigg)\frac{\mathrm{d}}{\mathrm{d}t}p(\bar{n},t)=-\bigg(\prod_{l=1}^{q}u_l^{n_l}\bigg)\sum_{i=1}^{q}\sum_{j_i=1}^{k_i}\lambda_{ij_i}p(\bar{n},t)	+\sum_{i=1}^{q}\bigg(\prod_{l=1,\, l\ne i}^{q}u_l^{n_l}\bigg)\sum_{j_i=1}^{k_i}\lambda_{ij_i}p(\bar{n}-\bar{\epsilon}_i^{j_i},t)u_i^{n_i-j_i}u_i^{j_i}
	\end{align*}}
whose sum over the range $\bar{n}\geq \bar{0}$ on both sides gives the required result.
\end{proof}
On solving \eqref{pgfde}, the pgf of MGCP is given by
\begin{equation}\label{pgffmgcp}
G(\bar{u},t)=\exp\bigg(-t\sum_{i=1}^{q}\sum_{j_i=1}^{k_i}\lambda_{ij_i}(1-u_i^{j_i})\bigg), \, |u_i|\le1.
\end{equation}
For $q=1$, the pgf \eqref{pgffmgcp} reduces to 
\begin{equation*}
	G(u,t)=\exp\bigg(-t\sum_{j=1}^{k}\lambda_j(1-u^j)\bigg),\, |u|\le1
\end{equation*}
which agrees with \eqref{pgfgcp}, the pgf of GCP $\{M(t)\}_{t\ge0}$.

Equivalently, we have the  following representation:
\begin{align}
G(\bar{u},t)&=e^{-\lambda t}\sum_{r=0}^{\infty}\frac{1}{r!}\bigg(t\sum_{i=1}^{q}\sum_{j_i=1}^{k_i}\lambda_{ij_i}u_i^{j_i}\bigg)^r\nonumber\\
&=e^{-\lambda t}\sum_{r=0}^{\infty}\sum_{\substack{\sum_{i=1}^{q}x_i=r\\x_i\in\mathbb{N}_0}}\prod_{i=1}^{q}\frac{1}{x_i!}\bigg(t\sum_{j_i=1}^{k_i}\lambda_{ij_i}u_i^{j_i}\bigg)^{x_i}\nonumber\\
&=e^{-\lambda t}\sum_{r=0}^{\infty}\sum_{\substack{\sum_{i=1}^{q}x_i=r\\x_i\in\mathbb{N}_0}}\prod_{i=1}^{q}\sum_{\substack{\sum_{j_i=1}^{k_i}x_{ij_i}=x_i\\x_{ij_i}\in\mathbb{N}_0}}\prod_{j_i=1}^{k_i}\frac{(\lambda_{ij_i} t)^{x_{ij_i}}}{x_{ij_i}!}u_i^{j_ix_{ij_i}}\nonumber\\
&=e^{-\lambda t}\sum_{\substack{x_i\geq 0\\ i=1,2,\dots, q}}\prod_{i=1}^{q}\sum_{\substack{\sum_{j_i=1}^{k_i}x_{ij_i}=x_i\\x_{ij_i}\in\mathbb{N}_0}}\prod_{j_i=1}^{k_i}\frac{(\lambda_{ij_i} t)^{x_{ij_i}}}{x_{ij_i}!}u_i^{j_ix_{ij_i}}\nonumber\\
&=e^{-\lambda t}\sum_{\substack{n_i\ge0\\i=1,2,\dots,q}}\Big(\prod_{i=1}^{q}u_i^{n_i}\Big)\prod_{i=1}^{q}\sum_{\Omega(k_i,n_i)}\prod_{j_i=1}^{k_i}\frac{(\lambda_{ij_i}t)^{x_{ij_i}}}{x_{ij_i}!},\label{pgfmcp}
\end{align}
where 
$\lambda=\sum_{i=1}^{q}\sum_{j_i=1}^{k_i}\lambda_{ij_i}$ and $\Omega(k_i,n_i)=\big\{(x_{i1}, x_{i2}, \dots, x_{ik_i}):\sum_{j_i=1}^{k_i}j_ix_{ij_i}=n_i,\, x_{ij_i}\in \mathbb{N}_{0}\big\}$.

Finally, on comparing the coefficient of $u_1^{n_1}u_2^{n_2}\dots u_q^{n_q}$ over the range $\bar{n}\ge\bar{0}$ on both sides of \eqref{pgfdef} and \eqref{pgfmcp}, we get
\begin{equation}\label{jopmf}
p(\bar{n},t)=\prod_{i=1}^{q}\sum_{\Omega(k_i,n_i)}\prod_{j_i=1}^{k_i}\frac{(\lambda_{ij_i}t)^{n_{ij_i}}}{n_{ij_i}!}e^{-\lambda_{ij_i} t},\,\,\bar{n}\ge\bar{0}.
\end{equation}

For $q=1$, the pmf \eqref{jopmf} reduces to that of $\{M(t)\}_{t\ge0}$.
\begin{remark}\label{remk}
From \eqref{jopmf}, it follows that the component GCPs $\{M_i(t)\}_{t\ge0}$ of MGCP are independent. Hence, the state probabilities $	p(\bar{n},t)$ satisfy the following recurrence relation:
\begin{equation*}
	p(\bar{n},t)=t^q\prod_{i=1}^{q}\frac{1}{n_i}\sum_{j_i=1}^{\min\{n_i,k_i\}}j_i\lambda_{ij_i}p(\bar{n}-\bar{j},t),\,\, \bar{n}\ge\bar{1},
\end{equation*}
which can be established using a recurrence relationship for the state probabilities of GCP (see Kataria and Khandakar (2022a), Proposition 1). Here, $p(\bar{n}-\bar{j},t)=p(n_1-j_1,n_2-j_2,\dots,n_q-j_q,t)$.
\end{remark}

\begin{remark}
From \eqref{ccc}, the GCP is equal in distribution to a compound Poisson process. So, it's a L\'evy process. Thus, on substituting $\tilde{q}^i(A_i)=\sum_{j_i=1}^{k_i}\mathbb{I}_{\{j_i\in A_i\}}\lambda_{ij_i}/\lambda_i$ in Eq. (21) of Beghin and Macci (2016), it follows that the MGCP is also a L\'evy process with the following L\'evy measure:
\begin{equation}\label{measgcp}
\Pi(A_1\times A_2\times\dots \times A_q)=\sum_{	i=1}^{q}\sum_{j_i=1}^{k_i}\lambda_{ij_i}\mathbb{I}_{\{j_i\in A_i\}}.
\end{equation}
On substituting $k_1=k_2=\dots=k_q=1$ in \eqref{measgcp}, $\Pi$ reduces to the L\'evy measure of multivariate Poisson process (see Beghin and Macci (2016), Eq. (22)).
 \end{remark}
%Note that,
%\begin{align*}
%\sum_{\bar{n}\ge\bar{0}}p(\bar{n},t)&=\sum_{\bar{n}\ge\bar{0}}\prod_{i=1}^{q}\sum_{\Omega(k_i,n_i)}\prod_{j_i=1}^{k_i}\frac{(t\lambda_{ij_i})^{y_{j_i}}}{y_{j_i}!}e^{-\Lambda t}\\
%&=\prod_{i=1}^{q}\sum_{n_1\ge0}\sum_{\Omega(k_i,n_i)}\prod_{j_i=1}^{k_i}\frac{(t\lambda_{ij_i})^{y_{j_i}}}{y_{j_i}!}e^{-\Lambda t}\\
%&=1.
%\end{align*} 
\begin{remark}
 Dhillon and Kataria (2023) showed that the process $\{\mathscr{M}^{q}(t)\}_{t\ge0}$ is a GCP, where  $\mathscr{M}^{q}(t)=\sum_{i=1}^{q}M_i(t)$ is the convolution of component GCPs of MGCP. 
\end{remark}
Next, we consider some time-changed variants of the MGCP.
\subsection{Multivariate  generalized space fractional counting process}
 For $i=1,2,\dots,q$, let the GCPs $\{M_i(t)\}_{t\ge0}$ be independent of an $\alpha$-stable subordinator $\{D_\alpha(t)\}_{t\ge0}$, $0<\alpha<1$. We consider the following  time-changed process:
\begin{equation}\label{mulalphadef}
\bar{M}^\alpha(t)\coloneqq \bar{M}(D_\alpha(t)), \, t\ge0,
\end{equation} 
where $\{\bar{M}(t)\}_{t\ge0}$ is the MGCP.
 We call it the multivariate
 generalized space fractional counting process (MGSFCP).
 
  In an infinitesimal time interval of length $h$ such that $o(h)/h\to 0$ as $h\to 0$, the transition probabilities of $\{\bar{M}^\alpha(t)\}_{t\ge0}$ are given by
 {\scriptsize\begin{align}\label{transprbstable}
 \mathrm{Pr}\{\bar{M}^\alpha(t+h)=\bar{n}+\bar{m}|\bar{M}^\alpha(t)=\bar{n}\}&=\begin{cases}
 -h\displaystyle\sum_{\substack{\Omega(k_i,m_i)\\i=1,2,\dots,q}}\frac{\lambda^\alpha\Gamma(1+\alpha)}{\Gamma(\alpha-\sum_{i=1}^{q}\sum_{j_i=1}^{k_i}x_{ij_i}+1)}\prod_{i=1}^{q}\prod_{j_i=1}^{k_i}\frac{(-\lambda_{ij_i}/\lambda)^{x_{ij_i}}}{x_{ij_i}!}+o(h),\ \bar{m}\succ\bar{0},\vspace{.2cm}\\
 1-\lambda^\alpha h+o(h),\ \bar{m}=\bar{0},
 \end{cases}
\end{align}}
where $\Omega(k_i,m_i)=\{(x_{i1},x_{i2},\dots,x_{ik_i}):\sum_{j_i=1}^{k_i}j_ix_{ij_i}=m_i,\,\, x_{ij_i}\in\mathbb{N}_0\}$.
\begin{remark}
 For $q=1$, the transition probabilities in \eqref{transprbstable} reduces to that of generalized space fractional counting process (see Kataria \textit{et al.} (2022), Section 4).
\end{remark}

For $q=1$, the MGSFCP reduces to the generalized space fractional counting process (see Kataria \textit{et al.} (2022)). Further, for $q=1$ and $k_1=1$, the MGSFCP reduces to  the space fractional Poisson process (see Orsingher and Polito (2012)).

Let $h_{D_\alpha(t)}(\cdot,t)$ be the density of $\{D_\alpha(t)\}_{t\ge0}$. Then, the pgf $G^\alpha(\bar{u},t)$ of MGSFCP can be obtained as follows:
\begin{align}
G^\alpha(\bar{u},t)&=\int_{	0}^{\infty}G(\bar{u},x)h_{D_\alpha(t)}(x,t)\,\mathrm{d}x\nonumber\\
&=\int_{0}^{\infty}\exp\bigg(-x\sum_{i=1}^{q}\sum_{j_i=1}^{k_i}\lambda_{ij_i}(1-u_i^{j_i})\bigg)h_{D_\alpha(t)}(x,t)\,\mathrm{d}x,\,\, \text{(using \eqref{pgffmgcp})}\nonumber\\
&=\exp\bigg(-t\bigg(\sum_{i=1}^{q}\sum_{j_i=1}^{k_i}\lambda_{ij_i}(1-u_i^{j_i})\bigg)^\alpha\bigg),\label{zz}
\end{align}
where the last step follows on using the Laplace transform of $\alpha$-stable subordinator (see Section \ref{seci}).
It follows that the pgf of MGSFCP satisfies the following differential equation:
\begin{equation}\label{zzeq}
	\frac{\partial}{\partial t}G^\alpha(\bar{u},t)=-\bigg(\sum_{i=1}^{q}\sum_{j_i=1}^{k_i}\lambda_{ij_i}(1-u_i^{j_i})\bigg)^\alpha G^\alpha(\bar{u},t),\,\, G^\alpha(\bar{u},0)=1.
\end{equation}
\begin{remark}
%For $k_1=k_2=\dots=k_q=1$, the pgf $G^\alpha(\bar{u},t)$ reduces to that of multivariate space fractional Poisson process  (see Beghin and Macci (2016), Eq. (5)). 
On substituting $q=1$ in \eqref{zz}, we get the pgf of generalized space fractional counting process (see Kataria \textit{et al.} (2022)). 
\end{remark}

\begin{proposition}
Let $X_l$, $l\ge1$ be iid uniform random variables in $[0,1]$. Then,
\begin{equation*}
	G^\alpha(\bar{u},t)=\mathrm{Pr}\bigg\{\min_{0\le l\le N(t)}X_l^{1/\alpha}\ge1-\frac{1}{\lambda}\sum_{i=1}^{q}\sum_{j_i=1}^{k_i}\lambda_{ij_i}u_i^{j_i}\bigg\},\, 0<u_i<1,
\end{equation*}
where $\lambda=\sum_{i=1}^{q}\sum_{j_i=1}^{k_i}\lambda_{ij_i}$ and $\{N(t)\}_{t\ge0}$ is a Poisson process with parameter $\lambda^\alpha$  
such that $\min_{0\le l\le N(t)}X_l^{1/\alpha}=1$ whenever $N(t)=0$. 
\end{proposition}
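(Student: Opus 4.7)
The plan is to start from the closed-form expression \eqref{zz} for $G^\alpha(\bar u,t)$ and recognize the right-hand side as the probability generating function of a compound Poisson-type event. First I would introduce the shorthand $a\coloneqq 1-\frac{1}{\lambda}\sum_{i=1}^{q}\sum_{j_i=1}^{k_i}\lambda_{ij_i}u_i^{j_i}$ and note that the hypothesis $0<u_i<1$, together with $\sum_{i,j_i}\lambda_{ij_i}=\lambda$, forces $0<a<1$. With this notation, the inner sum in \eqref{zz} equals $\lambda a$, so
\begin{equation*}
    G^\alpha(\bar u,t)=\exp\bigl(-t\lambda^\alpha a^\alpha\bigr).
\end{equation*}

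Next I would evaluate the probability on the right-hand side by conditioning on $N(t)$. Since the $X_l$'s are iid uniform on $[0,1]$ and $a\in(0,1)$, for each $n\ge 1$ independence gives
\begin{equation*}
    \mathrm{Pr}\Big\{\min_{1\le l\le n}X_l^{1/\alpha}\ge a\Big\}=\bigl(\mathrm{Pr}\{X_1\ge a^\alpha\}\bigr)^n=(1-a^\alpha)^n.
\end{equation*}
The stated convention $\min_{0\le l\le N(t)}X_l^{1/\alpha}=1$ when $N(t)=0$ makes the $n=0$ term equal to $1$, which matches the formula $(1-a^\alpha)^n$ at $n=0$. This boundary convention is the only subtle point in the argument and must be recorded explicitly, since otherwise the $n=0$ contribution would be ambiguous.

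Finally, assuming $\{N(t)\}_{t\ge 0}$ is independent of $\{X_l\}_{l\ge 1}$ and using $\mathrm{Pr}\{N(t)=n\}=e^{-t\lambda^\alpha}(t\lambda^\alpha)^n/n!$, I would sum the resulting series:
\begin{equation*}
    \mathrm{Pr}\Big\{\min_{0\le l\le N(t)}X_l^{1/\alpha}\ge a\Big\}=e^{-t\lambda^\alpha}\sum_{n=0}^{\infty}\frac{\bigl(t\lambda^\alpha(1-a^\alpha)\bigr)^n}{n!}=e^{-t\lambda^\alpha a^\alpha},
\end{equation*}
which coincides with $G^\alpha(\bar u,t)$ computed above. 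There is no real obstacle here; the argument is a one-step conditioning followed by the exponential series, and the identification hinges only on the algebraic fact that $\lambda-\sum_{i,j_i}\lambda_{ij_i}u_i^{j_i}=\lambda a$.
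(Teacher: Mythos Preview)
Your proposal is correct and follows essentially the same approach as the paper: both start from the closed form \eqref{zz}, rewrite it as $\exp(-t\lambda^\alpha a^\alpha)$ with $a=1-\lambda^{-1}\sum_{i,j_i}\lambda_{ij_i}u_i^{j_i}\in(0,1)$, and identify this with the probability on the right by conditioning on $N(t)$, using $\mathrm{Pr}\{X_l^{1/\alpha}\ge a\}=1-a^\alpha$, and summing the exponential series. The only cosmetic difference is that the paper runs the chain of equalities from $G^\alpha(\bar u,t)$ toward the probability, whereas you compute the two sides separately and match them.
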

\begin{proof}
From \eqref{zz}, we have
\begin{align*}
G^\alpha(\bar{u},t)&=\exp\bigg(-t\lambda^\alpha\bigg(1-\frac{1}{\lambda}\sum_{i=1}^{q}\sum_{j_i=1}^{k_i}\lambda_{ij_i}u_i^{j_i}\bigg)^\alpha\bigg)\\
&=e^{-t\lambda^\alpha}\sum_{r=0}^{\infty}\frac{(t\lambda^\alpha)^r}{r!}\bigg(1-\bigg(1-\frac{1}{\lambda}\sum_{i=1}^{q}\sum_{j_i=1}^{k_i}\lambda_{ij_i}u_i^{j_i}\bigg)^\alpha\bigg)^r\\
&=e^{-t\lambda^\alpha}\sum_{r=0}^{\infty}\bigg(\mathrm{Pr}\bigg\{X_l^{1/\alpha}\ge1-\frac{1}{\lambda}\sum_{i=1}^{q}\sum_{j_i=1}^{k_i}\lambda_{ij_i}u_i^{j_i}\bigg\}\bigg)^r\frac{(t\lambda^\alpha)^r}{r!}\\
&=\sum_{r=0}^{\infty}\mathrm{Pr}\bigg\{\min_{0\le l\le r}X_l^{1/\alpha}\ge1-\frac{1}{\lambda}\sum_{i=1}^{q}\sum_{j_i=1}^{k_i}\lambda_{ij_i}u_i^{j_i}\bigg\}\frac{(t\lambda^\alpha)^r}{r!}e^{-t\lambda^\alpha}\\
&=\mathrm{Pr}\bigg\{\min_{0\le l\le N(t)}X_l^{1/\alpha}\ge1-\frac{1}{\lambda}\sum_{i=1}^{q}\sum_{j_i=1}^{k_i}\lambda_{ij_i}u_i^{j_i}\bigg\}.
\end{align*}
 This completes the proof.
\end{proof}
\begin{proposition}\label{prppp}
	The state probabilities $p^\alpha(\bar{n},t)=\mathrm{Pr}\{\bar{M}^\alpha(t)=\bar{n}\}$, $\bar{n}\ge\bar{0}$  solve the following differential equation:
	\begin{equation*}
		\frac{\mathrm{d}}{\mathrm{d}t}p^\alpha(\bar{n},t)=-\sum_{\substack{m_i\geq 0\\ i=1,2,\dots,q}}\sum_{\substack{\Omega(k_i,m_i)\\i=1,2,\dots,q}}\frac{\lambda^\alpha\Gamma(\alpha +1)p^\alpha(\bar{n}-\bar{m},t)}{\Gamma(\alpha -\sum_{i=1}^{q}\sum_{j_i=1}^{k_i}x_{ij_i}+1)}\prod_{i=1}^{q}\prod_{j_i=1}^{k_i}\frac{\big(-\lambda_{ij_i}/\lambda\big)^{x_{ij_i}}}{x_{ij_i}!}
	\end{equation*}
	with initial condition $p^\alpha(\bar{n},0)=\mathbb{I}_{\{\bar{n}=\bar{0}\}}$, where $\lambda=\sum_{i=1}^{q}\sum_{j_i=1}^{k_i}\lambda_{ij_i}$ and  $\Omega(k_i,m_i)=\{(x_{i1},  x_{i2}$, $\dots,  x_{ik_i} ):\sum_{j_i=1}^{k_i}j_ix_{ij_i}=m_i,\, x_{ij_i}\in\mathbb{N}_{0}\}$.
\end{proposition}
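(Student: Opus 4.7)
The plan is to start from the first-order linear differential equation \eqref{zzeq} governing the pgf $G^\alpha(\bar u,t)$ and to recover the stated evolution equation for $p^\alpha(\bar n,t)$ by expanding the coefficient on the right-hand side as a formal power series in $u_1,u_2,\dots,u_q$ and matching coefficients with the pgf expansion \eqref{pgfdef}.

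First I would rewrite the prefactor appearing in \eqref{zzeq} by factoring out $\lambda=\sum_{i=1}^{q}\sum_{j_i=1}^{k_i}\lambda_{ij_i}$, obtaining
$\bigl(\sum_{i,j_i}\lambda_{ij_i}(1-u_i^{j_i})\bigr)^\alpha=\lambda^\alpha\bigl(1-\tfrac{1}{\lambda}\sum_{i,j_i}\lambda_{ij_i}u_i^{j_i}\bigr)^\alpha$,
and then apply the generalized binomial series $(1-y)^\alpha=\sum_{r=0}^{\infty}\frac{\Gamma(\alpha+1)}{\Gamma(\alpha-r+1)\,r!}(-y)^r$ with $y=\tfrac{1}{\lambda}\sum_{i,j_i}\lambda_{ij_i}u_i^{j_i}$. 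Next, I would apply the multinomial theorem to the $r$-th power $\bigl(\sum_{i,j_i}\lambda_{ij_i}u_i^{j_i}\bigr)^r$, producing
\[
\lambda^\alpha\Bigl(1-\tfrac{1}{\lambda}\sum_{i,j_i}\lambda_{ij_i}u_i^{j_i}\Bigr)^\alpha=\sum_{\{x_{ij_i}\}\in\mathbb{N}_0}\frac{\lambda^\alpha\Gamma(\alpha+1)}{\Gamma\bigl(\alpha-\sum_{i,j_i}x_{ij_i}+1\bigr)}\prod_{i=1}^{q}\prod_{j_i=1}^{k_i}\frac{(-\lambda_{ij_i}/\lambda)^{x_{ij_i}}}{x_{ij_i}!}\,u_i^{j_ix_{ij_i}},
\]
where the outer sum runs over all non-negative integer families $\{x_{ij_i}\}$.

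Then I would regroup the right-hand side according to the total exponent of each $u_i$. For fixed $\bar m=(m_1,\dots,m_q)\ge\bar 0$, the monomial $u_1^{m_1}u_2^{m_2}\dots u_q^{m_q}$ appears precisely when $(x_{i1},x_{i2},\dots,x_{ik_i})\in\Omega(k_i,m_i)$ for every $i$, so the coefficient of $\prod_i u_i^{m_i}$ is exactly
\[
\sum_{\substack{\Omega(k_i,m_i)\\ i=1,\dots,q}}\frac{\lambda^\alpha\Gamma(\alpha+1)}{\Gamma\bigl(\alpha-\sum_{i,j_i}x_{ij_i}+1\bigr)}\prod_{i=1}^{q}\prod_{j_i=1}^{k_i}\frac{(-\lambda_{ij_i}/\lambda)^{x_{ij_i}}}{x_{ij_i}!}.
\]

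To finish, I would multiply this expansion by $G^\alpha(\bar u,t)=\sum_{\bar n\ge\bar 0}\prod_i u_i^{n_i}\,p^\alpha(\bar n,t)$ using the Cauchy product in several variables, and compare the coefficient of $\prod_i u_i^{n_i}$ on both sides of \eqref{zzeq}. The resulting identity is exactly the claimed differential equation, since termwise differentiation of the pgf series is legitimate inside the radius of convergence $|u_i|\le 1$. The initial condition $p^\alpha(\bar n,0)=\mathbb{I}_{\{\bar n=\bar 0\}}$ follows from $G^\alpha(\bar u,0)=1$. The main bookkeeping obstacle is justifying the interchange of the triple summation (binomial index $r$, the multinomial composition, and the pgf series) and converting the unconstrained sum over $\{x_{ij_i}\}$ into the constrained sum over $\Omega(k_i,m_i)$; this is handled by absolute convergence on $|u_i|<1$ together with the observation that the multi-index sum naturally partitions by the values $m_i=\sum_{j_i}j_ix_{ij_i}$.
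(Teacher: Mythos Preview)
Your proposal is correct and coincides almost step for step with the paper's \emph{alternate} proof given in Appendix~A1: factor out $\lambda$, expand $(1-y)^\alpha$ via the generalized binomial series, open $y^r$ with the multinomial theorem, regroup by the constraint sets $\Omega(k_i,m_i)$, and equate coefficients of $\prod_i u_i^{n_i}$ after multiplying by the pgf series. The paper's \emph{primary} proof takes a different route, working directly from the transition probabilities \eqref{transprbstable}: it writes $p^\alpha(\bar n,t+h)$ as a sum over incoming states, substitutes the infinitesimal transition rates, and lets $h\to 0$. That approach is slightly more probabilistic in flavor and avoids any analytic justification of series manipulations, whereas your pgf argument is purely computational but requires the absolute-convergence remark you already flagged; both reach the same identity with comparable effort.
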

\begin{proof}
	We have
		\begin{align*}
			p^\alpha(\bar{n},t+h)&=p^\alpha(\bar{n},t)\mathrm{Pr}\{\bar{M}^\alpha(t+h)=\bar{n}|\bar{M}^\alpha(t)=\bar{n}\}\\
			&\ \  +\sum_{\bar{m}\succ\bar{0}}p^\alpha(\bar{n}-\bar{m},t)\mathrm{Pr}\{\bar{M}^\alpha(t+h)=\bar{n}|\bar{M}^\alpha(t)=\bar{n}-\bar{m}\}+o(h).
		\end{align*}
		Using \eqref{transprbstable}, we get
		\begin{align*}
			p^\alpha(\bar{n},t+h)&=o(h)+p^\alpha(\bar{n},t)(1-\lambda^\alpha h)\\
			&\ \ -h\sum_{\bar{m}\succ\bar{0}}p^\alpha(\bar{n}-\bar{m},t)\displaystyle\sum_{\substack{\Omega(k_i,m_i)\\i=1,2,\dots,q}}\frac{\lambda^\alpha\Gamma(1+\alpha)}{\Gamma(\alpha-\sum_{i=1}^{q}\sum_{j_i=1}^{k_i}x_{ij_i}+1)}\prod_{i=1}^{q}\prod_{j_i=1}^{k_i}\frac{(-\lambda_{ij_i}/\lambda)^{x_{ij_i}}}{x_{ij_i}!}.
		\end{align*}
		Equivalently,
		\begin{align*}
			\frac{p^\alpha(\bar{n},t+h)-p^\alpha(\bar{n},t)}{h}&=\frac{o(h)}{h}-\lambda^\alpha p^\alpha(\bar{n},t)\\
			&\ \ -\sum_{\bar{m}\succ\bar{0}}\displaystyle\sum_{\substack{\Omega(k_i,m_i)\\i=1,2,\dots,q}}\frac{\lambda^\alpha\Gamma(1+\alpha)p^\alpha(\bar{n}-\bar{m},t)}{\Gamma(\alpha-\sum_{i=1}^{q}\sum_{j_i=1}^{k_i}x_{ij_i}+1)}\prod_{i=1}^{q}\prod_{j_i=1}^{k_i}\frac{(-\lambda_{ij_i}/\lambda)^{x_{ij_i}}}{x_{ij_i}!}.
		\end{align*}
		On letting $h\to 0$, we get the required result.
\end{proof}
An alternate proof of Proposition \ref{prppp} is given in Appendix A1.
\begin{theorem}\label{thmyy}
The state probabilities $p^\alpha(\bar{n},t)$, $\bar{n}\ge\bar{0}$ of MGSFCP are given by
\begin{equation*}
p^\alpha(\bar{n},t)=\sum_{\substack{\Omega(k_i,n_i)\\i=1,2,\dots,q}}\sum_{r=0}^{\infty}\frac{(-t\lambda^\alpha )^r}{r!}\frac{\Gamma(\alpha r+1)}{\Gamma(\alpha r-\sum_{i=1}^{q}\sum_{j_i=1}^{k_i}x_{ij_i}+1)}\prod_{i=1}^{q}\prod_{j_i=1}^{k_i}\frac{\big(-\lambda_{ij_i}/\lambda\big)^{x_{ij_i}}}{x_{ij_i}!},
\end{equation*}
where $\lambda=\sum_{i=1}^{q}\sum_{j_i=1}^{k_i}\lambda_{ij_i}$ and $\Omega(k_i,n_i)=\{(x_{i1}$, $ x_{i2}, \dots, x_{ik_i} ):\sum_{j_i=1}^{k_i}j_ix_{ij_i}=n_i,\, x_{ij_i}\in\mathbb{N}_{0}\}$.
\end{theorem}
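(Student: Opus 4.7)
The plan is to derive the pmf directly from the pgf expression \eqref{zz} by a double series expansion, extracting the coefficient of $\prod_{i=1}^{q} u_i^{n_i}$.

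First I would rewrite the argument of the outer exponential in \eqref{zz} by factoring out $\lambda$:
\begin{equation*}
\sum_{i=1}^{q}\sum_{j_i=1}^{k_i}\lambda_{ij_i}(1-u_i^{j_i}) = \lambda\bigg(1-\frac{1}{\lambda}\sum_{i=1}^{q}\sum_{j_i=1}^{k_i}\lambda_{ij_i}u_i^{j_i}\bigg),
\end{equation*}
so that
\begin{equation*}
G^\alpha(\bar{u},t)=\exp\bigg(-t\lambda^\alpha\bigg(1-\frac{1}{\lambda}\sum_{i=1}^{q}\sum_{j_i=1}^{k_i}\lambda_{ij_i}u_i^{j_i}\bigg)^\alpha\bigg).
\end{equation*}
Expanding the outer exponential gives an absolutely convergent power series in $r$, yielding a factor $(-t\lambda^\alpha)^r/r!$ multiplied by the $(\alpha r)$-th power of the bracketed term.

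Next I would apply the generalized binomial series to that $(\alpha r)$-th power, writing
\begin{equation*}
\bigg(1-\frac{1}{\lambda}\sum_{i=1}^{q}\sum_{j_i=1}^{k_i}\lambda_{ij_i}u_i^{j_i}\bigg)^{\alpha r}=\sum_{m=0}^{\infty}\frac{\Gamma(\alpha r+1)}{m!\,\Gamma(\alpha r-m+1)}\bigg(-\frac{1}{\lambda}\sum_{i=1}^{q}\sum_{j_i=1}^{k_i}\lambda_{ij_i}u_i^{j_i}\bigg)^{m},
\end{equation*}
then expand the inner $m$-th power by the multinomial theorem. The multinomial sum is indexed by non-negative integers $\{x_{ij_i}\}$ with $\sum_{i,j_i}x_{ij_i}=m$, producing the factor $m!\prod_{i,j_i}(-\lambda_{ij_i}/\lambda)^{x_{ij_i}} u_i^{j_i x_{ij_i}}/x_{ij_i}!$. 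Combining with the binomial coefficient cancels the $m!$ and, as the outer sum over $m$ together with the constraint $\sum_{i,j_i}x_{ij_i}=m$ collapses to an unconstrained sum over all $\{x_{ij_i}\}\in\mathbb{N}_0$, one obtains
\begin{equation*}
G^\alpha(\bar{u},t)=\sum_{r=0}^{\infty}\frac{(-t\lambda^\alpha)^r}{r!}\sum_{\{x_{ij_i}\}}\frac{\Gamma(\alpha r+1)}{\Gamma(\alpha r-\sum_{i,j_i}x_{ij_i}+1)}\prod_{i=1}^{q}\prod_{j_i=1}^{k_i}\frac{(-\lambda_{ij_i}/\lambda)^{x_{ij_i}}}{x_{ij_i}!}\,u_i^{j_i x_{ij_i}}.
\end{equation*}

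Finally I would extract the coefficient of $u_1^{n_1}u_2^{n_2}\cdots u_q^{n_q}$. The total exponent of $u_i$ is $\sum_{j_i=1}^{k_i}j_i x_{ij_i}$, so matching with $n_i$ restricts $\{(x_{i1},\dots,x_{ik_i})\}$ to lie in $\Omega(k_i,n_i)$ for each $i=1,\dots,q$, yielding exactly the claimed formula in view of \eqref{pgfdef}. The only non-routine point is justifying the interchange of the $r$-sum, the $m$-sum (equivalently the $\{x_{ij_i}\}$-sum) and the coefficient extraction; this is handled by restricting to $|u_i|<1$ where all series converge absolutely, and then the pmf formula, being a formal expression in $t$ and the rates, extends by uniqueness of power series coefficients.
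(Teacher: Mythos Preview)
Your proposal is correct and follows essentially the same approach as the paper's own proof: factor out $\lambda$ in the pgf \eqref{zz}, expand the outer exponential, apply the generalized binomial series to the $(\alpha r)$-th power, expand by multinomial, and then extract the coefficient of $u_1^{n_1}\cdots u_q^{n_q}$. The only cosmetic difference is that the paper performs the multinomial expansion in two nested stages (first over $i$, then over $j_i$), whereas you do it in a single step over all $\{x_{ij_i}\}$; the computations are equivalent.
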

\begin{proof}
From \eqref{zz}, we have
{\small\begin{align}
G^\alpha(\bar{u},t)&=\exp\bigg(-t\lambda^\alpha\bigg(1-\frac{1}{\lambda}\sum_{i=1}^{q}\sum_{j_i=1}^{k_i}\lambda_{ij_i}u_i^{j_i}\bigg)^\alpha\bigg)\nonumber\\
&=\sum_{r=0}^{\infty}\frac{(-t\lambda^\alpha )^r}{r!}\bigg(1-\frac{1}{\lambda}\sum_{i=1}^{q}\sum_{j_i=1}^{k_i}\lambda_{ij_i}u_i^{j_i}\bigg)^{\alpha r}\nonumber\\
&=\sum_{r=0}^{\infty}\frac{(-t\lambda^\alpha )^r}{r!}\sum_{x\geq0}\binom{\alpha r}{x}\bigg(-\frac{1}{\lambda}\bigg)^x\bigg(\sum_{i=1}^{q}\sum_{j_i=1}^{k_i}\lambda_{ij_i}u_i^{j_i}\bigg)^x\nonumber\\
&=\sum_{r=0}^{\infty}\frac{(-t\lambda^\alpha )^r}{r!}\sum_{x\geq0}\binom{\alpha r}{x}\bigg(-\frac{1}{\lambda}\bigg)^x\sum_{\substack{\sum_{i=1}^{q}x_i=x\\x_i\in\mathbb{N}_0}}x!\prod_{i=1}^{q}\frac{(\sum_{j_i=1}^{k_i}\lambda_{ij_i}u_i^{j_i})^{x_i}}{x_i!}\nonumber\\
%&=\sum_{r=0}^{\infty}\frac{(-t\lambda^\alpha )^r}{r!}\sum_{x\geq0}\binom{\alpha r}{x}\bigg(-\frac{1}{\lambda}\bigg)^x\sum_{\substack{\sum_{i=1}^{q}x_i=x\\x_i\in\mathbb{N}_0}}x!\prod_{i=1}^{q}\sum_{\substack{\sum_{j_i=1}^{k_i}x_{ij_i}=x_i\\
%x_{ij_i\in\mathbb{N}_0}}}\prod_{j_i=1}^{k_i}\frac{(\lambda_{ij_i}u_i^{j_i})^{x_{ij_i}}}{x_{ij_i}!}\nonumber\\
&=\sum_{r=0}^{\infty}\frac{(-t\lambda^\alpha )^r}{r!}\sum_{x\geq0}\frac{\Gamma(\alpha r+1)}{\Gamma(\alpha r-x+1)}\bigg(-\frac{1}{\lambda}\bigg)^x\sum_{\substack{\sum_{i=1}^{q}x_i=x\\x_i\in\mathbb{N}_0}}\prod_{i=1}^{q}\sum_{\substack{\sum_{j_i=1}^{k_i}x_{ij_i}=x_i\\
x_{ij_i\in\mathbb{N}_0}}}\prod_{j_i=1}^{k_i}\frac{(\lambda_{ij_i}u_i^{j_i})^{x_{ij_i}}}{x_{ij_i}!}\nonumber\\
&=\sum_{r=0}^{\infty}\frac{(-t\lambda^\alpha )^r}{r!}\sum_{\substack{x_i\geq0\\i=1,2,\dots,q}}\frac{\Gamma(\alpha r+1)}{\Gamma(\alpha r-\sum_{i=1}^{q}x_i+1)}\prod_{i=1}^{q}\sum_{\substack{\sum_{j_i=1}^{k_i}x_{ij_i}=x_i\\
x_{ij_i\in\mathbb{N}_0}}}\prod_{j_i=1}^{k_i}\bigg(\frac{-\lambda_{ij_i}u_i^{j_i}}{\lambda}\bigg)^{x_{ij_i}}\frac{1}{x_{ij_i}!}\nonumber\\
&=\sum_{r=0}^{\infty}\frac{(-t\lambda^\alpha )^r}{r!}\sum_{\substack{n_i\ge0\\i=1,2,\dots,q}}\sum_{\substack{\Omega(k_i,n_i)\\i=1,2,\dots,q}}\frac{\Gamma(\alpha r+1)}{\Gamma(\alpha r-\sum_{i=1}^{q}\sum_{j_i=1}^{k_i}x_{ij_i}+1)}\Big(\prod_{i=1}^{q}u_i^{n_i}\Big)\prod_{i=1}^{q}\prod_{j_i=1}^{k_i}\frac{\big(-\lambda_{ij_i}/\lambda\big)^{x_{ij_i}}}{x_{ij_i}!}\nonumber\\
&=\sum_{\substack{n_i\ge0\\i=1,2,\dots,q}}\Big(\prod_{i=1}^{q}u_i^{n_i}\Big)\sum_{\substack{\Omega(k_i,n_i)\\i=1,2,\dots,q}}\sum_{r=0}^{\infty}\frac{(-t\lambda^\alpha )^r}{r!}\frac{\Gamma(\alpha r+1)}{\Gamma(\alpha r-\sum_{i=1}^{q}\sum_{j_i=1}^{k_i}x_{ij_i}+1)}\prod_{i=1}^{q}\prod_{j_i=1}^{k_i}\frac{\big(-\lambda_{ij_i}/\lambda\big)^{x_{ij_i}}}{x_{ij_i}!}.\label{**}
\end{align}}
Also, we have
\begin{equation}\label{*}
	G^\alpha(\bar{u},t)=\sum_{\substack{n_i\geq 0\\ i=1,2,\dots,q}}\Big(\prod_{i=1}^{q}u_i^{n_i}\Big)p^\alpha(\bar{n},t).
\end{equation}
Finally, on comparing the coefficient of $u_1^{n_1}u_2^{n_2}\dots u_q^{n_q}$ over the range $\bar{n}\ge\bar{0}$ in \eqref{**} and \eqref{*}, the result follows.
\end{proof}
\begin{corollary}
The pmf of MGSFCP can be written in terms of the  generalized Wright function as follows:
\begin{equation}\label{alphawright}
p^\alpha(\bar{n},t)=
\sum_{\substack{\Omega(k_i,n_i)\\i=1,2,\dots,q}}
 {}_1\Psi_1\left[\begin{matrix}
(1,\alpha)_{1,1}\\
\Big(1-\sum_{i=1}^{q}\sum_{j_i=1}^{k_i}x_{ij_i}, \alpha\Big)_{1,1}
\end{matrix}\Bigg| -\lambda^\alpha t\right]\prod_{i=1}^{q}\prod_{j_i=1}^{k_i}\frac{\big(-\lambda_{ij_i}/\lambda\big)^{x_{ij_i}}}{x_{ij_i}!},\, \bar{n}\ge\bar{0},
\end{equation}
where ${}_1\Psi_1(\cdot)$ is the generalized Wright function  given in \eqref{wrightfn}.
\end{corollary}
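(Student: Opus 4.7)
The plan is to derive this corollary as a direct consequence of Theorem \ref{thmyy} by recognizing the inner series over $r$ as the series representation of the generalized Wright function ${}_1\Psi_1$. No new analysis is required; the entire proof is a matter of matching the series in Theorem \ref{thmyy} to the definition \eqref{wrightfn}.

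More concretely, I start from the expression
\begin{equation*}
p^\alpha(\bar{n},t)=\sum_{\substack{\Omega(k_i,n_i)\\i=1,2,\dots,q}}\sum_{r=0}^{\infty}\frac{(-t\lambda^\alpha )^r}{r!}\frac{\Gamma(\alpha r+1)}{\Gamma(\alpha r-\sum_{i=1}^{q}\sum_{j_i=1}^{k_i}x_{ij_i}+1)}\prod_{i=1}^{q}\prod_{j_i=1}^{k_i}\frac{(-\lambda_{ij_i}/\lambda)^{x_{ij_i}}}{x_{ij_i}!}
\end{equation*}
established in Theorem \ref{thmyy}. Holding the tuple $(x_{ij_i})$ fixed, I rewrite $\Gamma(\alpha r+1)=\Gamma(1+\alpha r)$ in the numerator and $\Gamma(\alpha r-\sum_{i,j_i}x_{ij_i}+1)=\Gamma((1-\sum_{i,j_i}x_{ij_i})+\alpha r)$ in the denominator, so that the inner sum takes the form
\begin{equation*}
\sum_{r=0}^{\infty}\frac{\Gamma(1+\alpha r)}{\Gamma\bigl(1-\sum_{i=1}^{q}\sum_{j_i=1}^{k_i}x_{ij_i}+\alpha r\bigr)}\frac{(-\lambda^\alpha t)^r}{r!}.
\end{equation*}
Comparing with definition \eqref{wrightfn} with $p=q=1$, parameters $a_1=1,\alpha_1=\alpha$, $b_1=1-\sum_{i,j_i}x_{ij_i}$, $\beta_1=\alpha$, and argument $-\lambda^\alpha t$, this is exactly ${}_1\Psi_1$ evaluated as displayed in \eqref{alphawright}. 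Substituting this identification back yields the stated formula.

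The only subtlety worth remarking on is that the parameter $b_1=1-\sum_{i,j_i}x_{ij_i}$ may be a non-positive integer, in which case some terms in the series have a formal pole in the gamma function in the denominator; however, these are interpreted via the usual convention $1/\Gamma(-k)=0$ for $k\in\mathbb{N}_0$, which is consistent with the truncation implicit in Theorem \ref{thmyy}, so no convergence or interpretation issue actually arises. Since the identification is term-by-term and the outer sum over $\Omega(k_i,n_i)$ is carried along unchanged, this completes the proof. I do not anticipate any serious obstacle; the corollary is essentially notational repackaging of Theorem \ref{thmyy}.
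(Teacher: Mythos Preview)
Your proposal is correct and matches the paper's approach: the corollary is stated without proof immediately after Theorem \ref{thmyy}, and the intended derivation is precisely the term-by-term identification of the inner $r$-series with the definition \eqref{wrightfn} of ${}_1\Psi_1$. Your remark on the convention $1/\Gamma(-k)=0$ is a helpful clarification that the paper leaves implicit.
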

\begin{remark}
For $q=1$,  $p^\alpha(\bar{n},t)$ reduces to the pmf of generalized space fractional counting process (see Kataria \textit{et al.} (2022), Eq. (4.6)). Further, on substituting  $q=1$ and $k_1=1$, the pmf of MGSFCP reduces to that of space fractional Poisson process (see Orsingher and Polito (2012)).  
\end{remark}

\begin{proposition}\label{levymeasalpha}
The L\'evy measure of  MGSFCP is given by
\begin{align}\label{lemeas}
\Pi^\alpha(A_1\times A_2\times\dots\times A_q)
&=\frac{\alpha\lambda^\alpha }{\Gamma(1-\alpha)}\sum_{\bar{n}\succ\bar{0}}\sum_{\substack{\Omega(k_i,n_i)\\i=1,2,\dots,q}}\Gamma\Big(\sum_{i=1}^{q}\sum_{j_i=1}^{k_i}n_{ij_i}-\alpha\Big)\prod_{i=1}^{q}\prod_{j_i=1}^{k_i}\frac{(\lambda_{ij_i}/\lambda)^{n_{ij_i}}}{n_{ij_i}!}\mathbb{I}_{\{n_i\in A_i\}},
\end{align}
where $\Omega(k_i,n_i)=\{(n_{i1},n_{i2},\dots,n_{ik_i}):\sum_{j_i=1}^{k_i}j_in_{ij_i}=n_i,\,n_{ij_i}\in\mathbb{N}_0\}$ and $\lambda=\sum_{i=1}^{q}\sum_{x_l=1}^{k_i}\lambda_{ij_i}$.
\end{proposition}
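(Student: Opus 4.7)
The plan is to exploit the fact that the MGSFCP is a subordinated L\'evy process: the outer process $\{\bar M(t)\}_{t\ge 0}$ is L\'evy with L\'evy measure $\Pi$ from \eqref{measgcp}, and the time change $\{D_\alpha(t)\}_{t\ge 0}$ is a driftless $\alpha$-stable subordinator with L\'evy measure $\mu_{D_\alpha}(\mathrm{d}s)=\frac{\alpha}{\Gamma(1-\alpha)}s^{-\alpha-1}\mathrm{d}s$ from \eqref{lmeasalpha}. By the standard subordination theorem for L\'evy measures (see e.g.\ Sato or Applebaum (2009), Theorem 1.3.25), for any Borel set $B\subset\mathbb{R}^q\setminus\{\bar 0\}$,
\begin{equation*}
\Pi^\alpha(B)=b\,\Pi(B)+\int_0^\infty\mathrm{Pr}\{\bar M(s)\in B\}\,\mu_{D_\alpha}(\mathrm{d}s),
\end{equation*}
and since the drift $b$ of $D_\alpha$ vanishes, only the integral term survives.

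Taking $B=A_1\times\cdots\times A_q$ with $\bar 0\notin B$ and substituting the explicit pmf from \eqref{jopmf}, I would invoke Tonelli's theorem (every summand is non-negative) to interchange $\int_0^\infty$ with the sums $\sum_{\bar n\succ\bar 0}\prod_i\mathbb{I}_{\{n_i\in A_i\}}$ and $\sum_{\Omega(k_i,n_i)}$. This reduces the calculation to a single gamma integral
\begin{equation*}
\int_0^\infty s^{\sum_{i,j_i}n_{ij_i}-\alpha-1}e^{-\lambda s}\,\mathrm{d}s=\frac{\Gamma\bigl(\sum_{i,j_i}n_{ij_i}-\alpha\bigr)}{\lambda^{\sum_{i,j_i}n_{ij_i}-\alpha}},
\end{equation*}
where $\lambda=\sum_{i,j_i}\lambda_{ij_i}$ absorbs the product $\prod_{i,j_i}e^{-\lambda_{ij_i}s}$. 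Factoring out $\lambda^\alpha$ and combining it with the prefactor $\alpha/\Gamma(1-\alpha)$ coming from $\mu_{D_\alpha}$ and with $\prod_{i,j_i}\lambda_{ij_i}^{n_{ij_i}}/n_{ij_i}!$ from the pmf reproduces \eqref{lemeas} verbatim.

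The main technical point is justifying convergence of the gamma integral at $s=0$, which requires $\sum_{i,j_i}n_{ij_i}>\alpha$. This holds automatically: if $\bar n\succ\bar 0$ then some $n_i\ge 1$, so any tuple in $\Omega(k_i,n_i)$ satisfies $\sum_{j_i}n_{ij_i}\ge 1>\alpha$ since $\alpha\in(0,1)$. Thus no $\bar n=\bar 0$ term enters (as expected for a L\'evy measure, which places no mass at the origin), the integrand is non-negative throughout, and the Tonelli interchange is unconditional. Behaviour at $s=\infty$ is handled by the factor $e^{-\lambda s}$ with $\lambda>0$, so no further tail estimate is needed.
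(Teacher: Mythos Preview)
Your proof is correct and follows essentially the same route as the paper: both invoke the subordination formula for L\'evy measures (the paper cites Eq.~(30.8) of Sato (1999)), substitute the pmf \eqref{jopmf} and the L\'evy measure \eqref{lmeasalpha} of the $\alpha$-stable subordinator, and evaluate the resulting gamma integral. Your version is slightly more careful in justifying the Tonelli interchange and the integrability at $s=0$, but the argument is otherwise identical.
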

\begin{proof}
 On using
Eq. (30.8) of Sato (1999), the L\'evy measure of $\{\bar{M}^\alpha(t)\}_{t\ge0}$ can be obtained as follows:
{\small\begin{align*}
\Pi^\alpha(A_1\times A_2\times\dots\times A_q)&=\int_{0}^{\infty}\sum_{\bar{n}\succ\bar{0}}p(\bar{n},s)\mu_{D_\alpha}(\mathrm{d}s)\\
&=\frac{\alpha}{\Gamma(1-\alpha)}\sum_{\bar{n}\succ\bar{0}}\sum_{\substack{\Omega(k_i,n_i)\\i=1,2,\dots,q}}\bigg(\prod_{i=1}^{q}\prod_{j_i=1}^{k_i}\frac{\lambda_{ij_i}^{n_{ij_i}}}{n_{ij_i}!}\mathbb{I}_{\{n_i\in A_i\}}\bigg)\int_{0}^{\infty}e^{-\lambda s}s^{\sum_{i=1}^{q}\sum_{j_i=1}^{k_i}n_{ij_i}-\alpha-1}\mathrm{d}s\\
&=\frac{\alpha\lambda^\alpha }{\Gamma(1-\alpha)}\sum_{\bar{n}\succ\bar{0}}\sum_{\substack{\Omega(k_i,n_i)\\i=1,2,\dots,q}}\Gamma\Big(\sum_{i=1}^{q}\sum_{j_i=1}^{k_i}n_{ij_i}-\alpha\Big)\prod_{i=1}^{q}\prod_{j_i=1}^{k_i}\frac{(\lambda_{ij_i}/\lambda)^{n_{ij_i}}}{n_{ij_i}!}\mathbb{I}_{\{n_i\in A_i\}},
\end{align*}}
where the second step follows on using \eqref{jopmf} and the L\'evy measure of an $\alpha$-stable subordinator given in \eqref{lmeasalpha}. This completes the proof.
\end{proof}
\begin{remark}
On substituting $k_1=k_2=\dots=k_q=1$ in \eqref{lemeas}, we obtain the L\'evy measure  of multivariate space fractional Poisson process (see Beghin and Macci (2016), Eq. (24)).
\end{remark}

\begin{remark}
Let us consider a process $\{\bar{X}^\alpha(t)\}_{t\ge0}$, $\alpha\in(0,1]$ such that its state probabilities $p_{\bar{X}^\alpha}(\bar{n},t)$, $\bar{n}\ge\bar{0}$ satisfy the following system of differential equations:
\begin{equation}\label{aba}
	\frac{\mathrm{d}}{\mathrm{d}t}p_{\bar{X}^\alpha}(\bar{n},t)=-\sum_{\substack{m_i\ge0\\i=1,2,\dots,q}}\sum_{\substack{\Omega(k_i,m_i)\\i=1,2,\dots,q}}\frac{\lambda^\alpha \Gamma(\alpha +1)p_{\bar{X}^\alpha}(\bar{n}-\bar{m},t)}{\Gamma(\alpha -\sum_{i=1}^{q}\sum_{j_i=1}^{k_i}x_{ij_i}+1)}\prod_{i=1}^{q}\prod_{j_i=1}^{k_i}\frac{\big(-\lambda_{ij_i}/\lambda\big)^{x_{ij_i}}}{x_{ij_i}!}
\end{equation}
with $p_{\bar{X}^\alpha}(\bar{n},0)=\mathbb{I}_{\{\bar{n}=\bar{0}\}}$, where
$\lambda=\sum_{i=1}^{q}\sum_{j_i=1}^{k_i}\lambda_{ij_i}$ and $\Omega(k_i,m_i)=\{(x_{i1}$, $ x_{i2}, \dots, x_{ik_i} ):\sum_{j_i=1}^{k_i}j_ix_{ij_i}=m_i,\, x_{ij_i}\in\mathbb{N}_{0}\}$.

\paragraph{Case I}
For $\alpha\in(0,1)$, 
it can be shown that the pgf $G_{\bar{X}^\alpha}(\bar{u},t)$, $|u_i|\le1$ of $\{\bar{X}^\alpha(t)\}_{t\ge0}$ satisfies
\begin{equation*}
\frac{\partial}{\partial t}G_{\bar{X}^\alpha}(\bar{u},t)=-\bigg(\sum_{i=1}^{q}\sum_{j_i=1}^{k_i}\lambda_{ij_i}(1-u_i^{j_i})\bigg)^\alpha G_{\bar{X}^\alpha}(\bar{u},t),\,\, G_{\bar{X}^\alpha}(\bar{u},0)=1.
\end{equation*}
Hence,
\begin{equation*}
G_{\bar{X}^\alpha}(\bar{u},t)=\exp\bigg(-t\bigg(\sum_{i=1}^{q}\sum_{j_i=1}^{k_i}\lambda_{ij_i}(1-u_i^{j_i})\bigg)^\alpha\bigg), \, |u_i|\le1.
\end{equation*}

Thus, for $\alpha\in(0,1)$, we have
\begin{equation*}
	\bar{X}^\alpha(t)\overset{d}{=}\bar{M}^\alpha(t).
\end{equation*}

\paragraph{Case II}
For $\alpha=1$, \eqref{aba} reduces to the governing system of differential equations of the pmf of MGCP. In this case, it can be shown that the pgf $G_{\bar{X}^1}(\bar{u},t)$ reduces to the pgf of MGCP. Thus,  $\bar{X}^1(t)\overset{d}{=}\bar{M}(t)$.
\end{remark}

\subsection{Multivariate generalized fractional counting process}
Here, we consider a multivariate version of the GFCP introduced and studied by Di Crescenzo \textit{et al.} (2016). Let $\{Y_\beta(t)\}_{t\ge0}$, $0<\beta<1$ be an inverse $\beta$-stable subordinator independent of $\{M_i(t)\}_{t\ge0}$, $i=1,2,\dots,q$. Then, we define the time-changed process $\{\bar{\mathcal{M}}^\beta(t)\}$ as follows:
\begin{equation}\label{invrep}
\bar{\mathcal{M}}^\beta(t)\coloneqq\bar{M}(Y_\beta(t)),\, 0<\beta<1.
\end{equation}
We call it the multivariate generalized fractional counting process (MGFCP).

For $q=1$, it reduces to the GFCP (see Di Crescenzo \textit{et al.} (2016), Kataria and Khandakar (2022a)). Further, for $q=1$ and $k_1=1$, it reduces to the time fractional Poisson process (see Meershaert \textit{et al.} (2011)).

The pgf $H^\beta(\bar{u},t)=\mathbb{E}\bigg(\prod_{i=1}^{q}u_i^{M_i(Y_\beta(t))}\bigg),\, |u_i|\le1$ of $\{\bar{\mathcal{M}}^\beta(t)\}_{t\ge0}$ can be obtained as follows:
\begin{align}
H^\beta(\bar{u},t)&=\mathbb{E}\Big(\mathbb{E}\Big(\prod_{i=1}^{q}u_i^{M_i(Y_\beta(t))}\Big|Y_\beta(t)\Big)\Big)\nonumber\\
&=\mathbb{E}\Bigg(\exp\Big(-Y_\beta(t)\sum_{i=1}^{q}\sum_{j_i=1}^{k_i}\lambda_{ij_i}(1-u_i^{j_i})\Big)\bigg),\,\, \text{(using \eqref{pgffmgcp})}\nonumber\\
&=E_{\beta,1}\bigg(-t^\beta\sum_{i=1}^{q}\sum_{j_i=1}^{k_i}\lambda_{ij_i}(1-u_i^{j_i})\bigg),\label{invpgf}
\end{align}
where the last step follows from the Laplace transform of an inverse stable subordinator (see Section $\ref{seci}$).
It is known that the Mittag-Leffler function is an eigenfunction of the Caputo fractional
derivative. Therefore, the pgf $H^\beta(\bar{u},t)$ satisfies the following Cauchy problem:
\begin{equation}\label{invpgfde}
\frac{\partial^\beta}{\partial t^\beta}H^\beta(\bar{u},t)=-\sum_{i=1}^{q}\sum_{j_i=1}^{k_i}\lambda_{ij_i}(1-u_i^{j_i})H^\beta(\bar{u},t),\,\, H^\beta(\bar{u},0)=1,
\end{equation} 
where $\frac{\partial^\beta}{\partial t^\beta}$ is the Caputo fractional derivative given in \eqref{caputo}.
\begin{remark}
On substituting $q=1$ in \eqref{invpgfde}, it reduces to the Cauchy problem given in Di Crescenzo \textit{et al.} (2016), Proposition 2.1. Further, for $q=1$ and $\lambda_{1j_1}=\lambda_{j_1}$, the pgf \eqref{invpgf} reduces to $H^\beta(\bar{u},t)|_{q=1}=E_{\beta,1}(-t^\beta\sum_{j_1=1}^{k_1}\lambda_{j_1}(1-u_1^{j_1}))$ which agrees with the pgf of GFCP (see Kataria and Khandakar (2022a)).
\end{remark}
%Let $g_\beta(t,\cdot)$ be the density of $\{Y_\beta(t)\}_{t\ge0}$.  Then, the pgf \eqref{invpgf} can be alternatively obtained as follows:
%\begin{align*}
%H^\beta(\bar{u},t)&=\int_{0}^{\infty}G(\bar{u},x)g_\beta(t,x)\mathrm{d}x\\
%&=\int_{0}^{\infty}\exp\bigg(-x\sum_{i=1}^{q}\sum_{j_i=1}^{k_i}\lambda_{ij_i}(1-u_i^{j_i})\bigg)g_\beta(t,x)\mathrm{d}x\\
%&=E_{\beta,1}\bigg(-t^\beta\sum_{i=1}^{q}\sum_{j_i=1}^{k_i}\lambda_{ij_i}(1-u_i^{j_i})\bigg).
%\end{align*}
Next, we obtain the governing system of fractional differential equations for the pmf of MGFCP.
\begin{proposition}
The pmf $q^\beta(\bar{n},t)=\mathrm{Pr}\{\bar{\mathcal{M}}^\beta(t)=\bar{n}\}$, $\bar{n}\ge\bar{0}$ satisfies
\begin{equation}\label{invpmfde}
\frac{\mathrm{d}^\beta}{\mathrm{d}t^\beta}q^\beta(\bar{n},t)=-\lambda q^\beta(\bar{n},t)+\sum_{i=1}^{q}\sum_{j_i=1}^{k_i}\lambda_{ij_i}q^\beta(\bar{n}-\bar{\epsilon}^{j_i}_i,t),\,\, q^\beta(\bar{n},0)=\mathbb{I}_{\{\bar{n}=\bar{0}\}},
\end{equation}
where $\lambda=\sum_{i=1}^{q}\sum_{j_i=1}^{k_i}\lambda_{ij_i}$, and $\bar{\epsilon}_i^{j_i}\in \mathbb{N}_{0}^q$ is a $q$-tuple vector whose $i^{th}$ entry is $j_i$ and other entries are zero.
\end{proposition}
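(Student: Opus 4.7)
The plan is to derive the system \eqref{invpmfde} by extracting power series coefficients from the already-established pgf equation \eqref{invpgfde}. Writing
\[
H^\beta(\bar u,t)=\sum_{\bar n\ge\bar 0}q^\beta(\bar n,t)\prod_{i=1}^{q}u_i^{n_i},
\]
which converges absolutely on $\{|u_i|\le 1\}_{i=1}^q$ since $q^\beta(\bar n,t)$ is a pmf, I will substitute this series into both sides of \eqref{invpgfde}, reindex to collect like monomials, and invoke uniqueness of the coefficients of a convergent multivariate power series.

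First I rewrite the right-hand side of \eqref{invpgfde} using $\sum_{i=1}^q\sum_{j_i=1}^{k_i}\lambda_{ij_i}=\lambda$ as
\[
-\lambda H^\beta(\bar u,t)+\sum_{i=1}^{q}\sum_{j_i=1}^{k_i}\lambda_{ij_i}u_i^{j_i}H^\beta(\bar u,t).
\]
Substituting the series for $H^\beta$ and noting that multiplying by $u_i^{j_i}$ shifts the $i$-th index by $j_i$, a monomial $q^\beta(\bar m,t)u_i^{j_i}\prod_l u_l^{m_l}$ contributes to the coefficient of $\prod_l u_l^{n_l}$ precisely when $\bar m=\bar n-\bar\epsilon_i^{j_i}$. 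Using the standing convention that $q^\beta(\bar n-\bar\epsilon_i^{j_i},t)=0$ whenever $\bar n\prec\bar\epsilon_i^{j_i}$, this reindexing is valid uniformly for every $\bar n\ge\bar 0$, and matching coefficients of $\prod_i u_i^{n_i}$ on the two sides yields the stated recursion. The initial condition is immediate from $\bar{\mathcal M}^\beta(0)=\bar M(Y_\beta(0))=\bar 0$ a.s.

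The step that requires care is the interchange of the Caputo operator $\partial^\beta/\partial t^\beta$ with the infinite sum defining $H^\beta$, so that coefficient-matching of the \emph{derivatives} is legitimate. The cleanest justification uses the definition \eqref{caputo}: $\partial^\beta/\partial t^\beta$ is the convolution of the locally integrable kernel $(t-s)^{-\beta}/\Gamma(1-\beta)$ with the time derivative. Since $|q^\beta(\bar n,t)\prod_i u_i^{n_i}|\le q^\beta(\bar n,t)$ and the partial sums $\sum_{\bar n}q^\beta(\bar n,t)$ are bounded by one uniformly in $t$, a Fubini argument on $[0,t]$ (or, equivalently, differentiating the tight family of truncated partial sums and passing to the limit via dominated convergence) transfers the operator inside the sum. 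As a sanity check, specializing to $q=1$ recovers the governing equation of the GFCP of Di Crescenzo \textit{et al.} (2016), while $q=1,\ k_1=1$ returns the classical time fractional Poisson equation.
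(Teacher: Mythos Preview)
Your approach is correct and coincides with the paper's \emph{alternate} proof: the authors also substitute the power series for $H^\beta$ into \eqref{invpgfde}, reindex the term $\sum_{i,j_i}\lambda_{ij_i}u_i^{j_i}H^\beta(\bar u,t)$ to produce $q^\beta(\bar n-\bar\epsilon_i^{j_i},t)$, and compare coefficients of $\prod_i u_i^{n_i}$. The paper's \emph{primary} proof takes a different route, working directly with the representation $q^\beta(\bar n,t)=\int_0^\infty p(\bar n,x)g_\beta(t,x)\,\mathrm{d}x$ and Laplace transforms: using $\tilde q^\beta(\bar n,s)=s^{\beta-1}\tilde p(\bar n,s^\beta)$ together with the Laplace-transformed version of \eqref{propmgcp}, one obtains $s^\beta\tilde q^\beta(\bar n,s)-s^{\beta-1}q^\beta(\bar n,0)$ on the left, which inverts to the Caputo derivative. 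That argument sidesteps entirely the interchange-of-operator issue you flagged, at the cost of invoking the Meerschaert--Scheffler density relation; your coefficient-matching route is more elementary but, as you note, requires the Fubini/dominated-convergence justification.
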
 
\begin{proof}
	From \eqref{invrep}, we have
	\begin{equation}\label{CCCC}
		q^\beta(\bar{n},t)=\int_{0}^{\infty}p(\bar{n},x)g_\beta(t,x)\,\mathrm{d}x,
	\end{equation}
	where $g_\beta(t,\cdot)$
	is the density of $\{Y_\beta(t)\}_{t\ge0}$ and $p(\bar{n},x)$ is the pmf of MGCP. On taking the Laplace transform of \eqref{CCCC}, we get
	\begin{equation}\label{aaaa}
		\tilde{q}^\beta(\bar{n},s)=s^{\beta -1}\int_{0}^{\infty}p(\bar{n},x)e^{-s^\beta x}\, \mathrm{d}x=s^{\beta-1}\tilde{p}(\bar{n},s^\beta)
	\end{equation}
 which follows on using Eq. (3.13) of Meerschaert and Scheffler (2008).

Moreover, on taking the Laplace transform with respect to $t$ on both sides of \eqref{propmgcp}, we have
	\begin{equation*}
		s\tilde{p}(\bar{n},s)-p(\bar{n},0)=-\lambda\tilde{p}(\bar{n},s)+\sum_{i=1}^{q}\sum_{j_i=1}^{k_i}\lambda_{ij_i}\tilde{p}(\bar{n}-\bar{\epsilon}^{j_i}_i,s),
	\end{equation*}
	which on using \eqref{aaaa} gives
	\begin{equation}\label{gh}
		s^\beta\tilde{q}^\beta(\bar{n},s)-s^{\beta-1}q^\beta(\bar{n},0)=-\lambda\tilde{q}^\beta(\bar{n},s)+\sum_{i=1}^{q}\sum_{j_i=1}^{k_i}\lambda_{ij_i}\tilde{q}^\beta(\bar{n}-\bar{\epsilon}^{j_i}_i,s).
	\end{equation} 
	Now, on taking the inverse Laplace transform in \eqref{gh}, we get \eqref{invpmfde}. 
	
	The following is an alternate proof:
	From \eqref{invpgfde}, we have
{\footnotesize\begin{align}\label{invdel}
\sum_{\substack{n_l\geq 0\\  l=1,2,\dots, q}}\Big(\prod_{l=1}^{q}u_l^{n_l}\Big)\frac{\mathrm{d}^\beta}{\mathrm{d} t^\beta}q^\beta(\bar{n},t)
%&=-\lambda \sum_{\substack{n_l\geq 0\\  l=1,2,\dots, q}}\Big(\prod_{l=1}^{q}u_l^{n_l}\Big)q^\beta(\bar{n},t)+ \sum_{i=1}^{q}\sum_{j_i=1}^{k_i}\lambda_{ij_i}u_i^{j_i}\sum_{\substack{n_l\geq 0\\  l=1,2,\dots, q}}\Big(\prod_{l=1}^{q}u_l^{n_l}\Big)q^\beta(\bar{n},t)\nonumber\\
&=-\lambda \sum_{\substack{n_l\geq 0\\  l=1,2,\dots, q}}\Big(\prod_{l=1}^{q}u_l^{n_l}\Big)q^\beta(\bar{n},t)+\sum_{\substack{n_l\geq 0\\  l=1,2,\dots, q}} \sum_{i=1}^{q}\bigg(\prod_{\substack{l=1\\ l\neq i}}^{q}u_l^{n_l}\bigg)\sum_{j_i=1}^{k_i}\lambda_{ij_i}q^\beta(\bar{n},t)u_i^{n_i+j_i}\nonumber\\
&=-\lambda \sum_{\substack{n_l\geq 0\\  l=1,2,\dots, q}}\Big(\prod_{l=1}^{q}u_l^{n_l}\Big)q^\beta(\bar{n},t)+\sum_{i=1}^{q}\sum_{n_i\ge j_i}\sum_{\substack{n_l\geq 0\\  l=1,2,\dots, q\\l\ne i}}\Big(\prod_{l=1}^{q}u_l^{n_l}\Big) \sum_{j_i=1}^{k_i}\lambda_{ij_i}q^\beta(\bar{n}-\bar{\epsilon}^{j_i}_i,t)\nonumber\\
&=\sum_{\substack{n_l\geq 0\\  l=1,2,\dots, q}}\Big(\prod_{l=1}^{q}u_l^{n_l}\Big)\bigg(-\lambda q^\beta(\bar{n},t)+ \sum_{i=1}^{q}\sum_{j_i=1}^{k_i}\lambda_{ij_i}q^\beta(\bar{n}-\bar{\epsilon}^{j_i}_i,t)\bigg),
\end{align}}
 where in the last step we neglected the restriction $n_i\ge j_i$. On comparing the coefficient of $u_1^{n_1}u_2^{n_2}\dots u_q^{n_q}$ over the range of $\bar{n}\ge\bar{0}$ on both sides of equation \eqref{invdel}, we get the required result.
\end{proof}
\begin{remark}
	For $q=1$, the governing system of differential equations \eqref{invpmfde} of MGFCP reduces to that of GFCP (see Di Crescenzo \textit{et al.} (2016), Eq. (2.3)).
\end{remark}

%On replacing $s$ with $s^\beta$ in the above equation, we obtain 
%\begin{equation*}	s^\beta\tilde{p}(\bar{n},s^\beta)-p(\bar{n},0)=-\lambda\tilde{p}(\bar{n},s^\beta)+\sum_{i=1}^{q}\sum_{j_i=1}^{k_i}\lambda_{ij_i}\tilde{p}(\bar{n}-\bar{\epsilon}^{j_i}_i,s^\beta).
%\end{equation*}
%%Now, on multiplying with $s^{\beta -1}$ on both sides of the above equation, we get
%Equivalently,
%\begin{equation}\label{bbbb}
%s^{\beta-1}s^\beta\tilde{p}(\bar{n},s^\beta)-s^{\beta-1}p(\bar{n},0)=-\lambda s^{\beta-1}\tilde{p}(\bar{n},s^\beta)+\sum_{i=1}^{q}\sum_{j_i=1}^{k_i}\lambda_{ij_i}s^{\beta-1}\tilde{p}(\bar{n}-\bar{\epsilon}^{j_i}_i,s^\beta).
%\end{equation}

\begin{theorem}
The state probabilities $q^\beta(\bar{n},t)$, $\bar{n}\ge\bar{0}$ of MGFCP are given by 
\begin{equation}\label{invpmf}
q^\beta(\bar{n},t)=\sum_{\substack{\Omega(k_i,n_i)\\i=1,2,\dots,q}}\Big(\sum_{i=1}^{q}\sum_{j_i=1}^{k_i}x_{ij_i}\Big)!E_{\beta,\,\beta\sum_{i=1}^{q}\sum_{j_i=1}^{k_i}x_{ij_i} +1}^{\sum_{i=1}^{q}\sum_{j_i=1}^{k_i}x_{ij_i}+1}(-\lambda t^\beta)\prod_{i=1}^{q}\prod_{j_i=1}^{k_i}\frac{(\lambda_{ij_i}t^{\beta })^{x_{ij_i}}}{x_{ij_i}!},
\end{equation}
where  $\lambda=\sum_{i=1}^{q}\sum_{j_i=1}^{k_i}\lambda_{ij_i}$, $\Omega(k_i,n_i)=\{(x_{i1},x_{i2},\dots,x_{ik_i}):\sum_{j_i=1}^{k_i}j_ix_{ij_i}=n_i,\, x_{ij_i}\in\mathbb{N}_{0}\}$ and $E_{\alpha,\,\beta}^{\gamma}(\cdot)$ is the generalized Mittag-Leffler function defined in \eqref{Mitagdef}.
\end{theorem}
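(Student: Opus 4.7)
The plan is to obtain the pmf by extracting the coefficient of $\prod_{i=1}^{q} u_i^{n_i}$ from the closed-form pgf \eqref{invpgf}, mirroring the strategy used in Theorem \ref{thmyy} for the MGSFCP. Writing $\lambda=\sum_{i,j_i}\lambda_{ij_i}$, I would first rewrite \eqref{invpgf} as
\[
H^\beta(\bar u,t)=E_{\beta,1}\!\left(-\lambda t^\beta+t^\beta\sum_{i=1}^{q}\sum_{j_i=1}^{k_i}\lambda_{ij_i}u_i^{j_i}\right),
\]
and then Taylor expand $E_{\beta,1}(z+w)$ in powers of $w=t^\beta\sum_{i,j_i}\lambda_{ij_i}u_i^{j_i}$ about $z=-\lambda t^\beta$.

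The key analytic input is the derivative identity
\[
\frac{d^m}{dz^m}E_{\beta,1}(z)=m!\,E^{m+1}_{\beta,\,m\beta+1}(z),
\]
which follows by termwise differentiation of the series \eqref{Mitagdef}, using $E_{\beta,1}=E^{1}_{\beta,1}$ and $(1)_m=m!$. Taylor's theorem therefore gives
\[
H^\beta(\bar u,t)=\sum_{m=0}^{\infty}E^{m+1}_{\beta,\,m\beta+1}(-\lambda t^\beta)\,\bigg(t^\beta\sum_{i=1}^{q}\sum_{j_i=1}^{k_i}\lambda_{ij_i}u_i^{j_i}\bigg)^{m}.
\]

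Next, I would apply the multinomial theorem to the $m$-th power, writing it as $m!\sum_{\sum_{i,j_i}x_{ij_i}=m}\prod_{i,j_i}\frac{(\lambda_{ij_i}t^\beta)^{x_{ij_i}}}{x_{ij_i}!}u_i^{j_i x_{ij_i}}$, where the factor $t^{\beta m}$ has been distributed as $\prod t^{\beta x_{ij_i}}$ across the product. Dropping the overall constraint $\sum x_{ij_i}=m$ by summing over all nonnegative multi-indices and then regrouping by the individual total exponents $n_i=\sum_{j_i}j_i x_{ij_i}$, the constraint singling out each $n_i$ is exactly the index set $\Omega(k_i,n_i)$. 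Comparing with $H^\beta(\bar u,t)=\sum_{\bar n\ge\bar 0}\bigl(\prod_i u_i^{n_i}\bigr)q^\beta(\bar n,t)$ then yields the claimed formula.

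The main obstacle is the combinatorial bookkeeping of the nested multi-index sums together with the clean tracking of the $m!$ factor produced by the multinomial expansion; once the derivative identity above is in hand, the manipulation becomes mechanical. An alternative I would keep in reserve is the subordination integral $q^\beta(\bar n,t)=\int_{0}^{\infty}p(\bar n,x)g_\beta(t,x)\,\mathrm{d}x$ from \eqref{CCCC}: substituting \eqref{jopmf}, noting that the exponentials collapse to $e^{-\lambda x}$, and using
\[
\int_{0}^{\infty}x^{n}e^{-\lambda x}g_\beta(t,x)\,\mathrm{d}x=(-1)^{n}\frac{d^{n}}{ds^{n}}E_{\beta,1}(-st^\beta)\Big|_{s=\lambda}=t^{n\beta}\,n!\,E^{n+1}_{\beta,\,n\beta+1}(-\lambda t^\beta),
\]
by the same derivative identity, would deliver the formula in essentially one step with $n=\sum_{i,j_i}x_{ij_i}$.
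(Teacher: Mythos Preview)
Your proposal is correct and follows essentially the same approach as the paper: both extract the pmf from the pgf \eqref{invpgf} by expanding in powers of the $u_i$. The paper carries this out via a direct series expansion of $E_{\beta,1}$, the binomial expansion of $(1-\cdot)^r$, and a switch of summation order to recognise the three-parameter Mittag-Leffler, which is precisely the computation your Taylor-expansion packaging via the derivative identity $\tfrac{d^m}{dz^m}E_{\beta,1}(z)=m!\,E^{m+1}_{\beta,m\beta+1}(z)$ encapsulates in one step.
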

\begin{proof}

From \eqref{invpgf}, we get
{\small\begin{align}
H^\beta(\bar{u},t)
%	&=\sum_{r=0}^{\infty}\frac{(-t^\beta)^r}{\Gamma(\beta r+1)}\bigg(\sum_{i=1}^{q}\sum_{j_i=1}^{k_i}\lambda_{ij_i}(1-u_i^{j_i})\bigg)^r\nonumber\\
	&=\sum_{r=0}^{\infty}\frac{(-\lambda t^\beta)^r}{\Gamma(\beta r+1)}\bigg(1-\frac{1}{\lambda}\sum_{i=1}^{q}\sum_{j_i=1}^{k_i}\lambda_{ij_i}u_i^{j_i}\bigg)^r\nonumber\\
	&=\sum_{r=0}^{\infty}\frac{(-\lambda t^\beta)^r}{\Gamma(\beta r+1)}\sum_{x=0}^{r}\frac{r!}{x!(r-x)!}\Big(-\frac{1}{\lambda}\Big)^x\bigg(\sum_{i=1}^{q}\sum_{j_i=1}^{k_i}\lambda_{ij_i}u_i^{j_i}\bigg)^x\nonumber\\
	&=\sum_{r=0}^{\infty}\frac{(-\lambda t^\beta)^r}{\Gamma(\beta r+1)}\sum_{x=0}^{r}\frac{r!}{(r-x)!}\Big(-\frac{1}{\lambda}\Big)^x\sum_{\substack{\sum_{i=1}^{q}x_i=x\\x_i\in\mathbb{N}_0}}\prod_{i=1}^{q}\frac{1}{x_i!}\bigg(\sum_{j_i=1}^{k_i}\lambda_{ij_i}u_i^{j_i}\bigg)^{x_i}\nonumber\\
	&=\sum_{r=0}^{\infty}\frac{(-\lambda t^\beta)^r}{\Gamma(\beta r+1)}\sum_{x=0}^{r}\frac{r!}{(r-x)!}\Big(-\frac{1}{\lambda}\Big)^x\sum_{\substack{\sum_{i=1}^{q}x_i=x\\x_i\in\mathbb{N}_0}}\prod_{i=1}^{q}\sum_{\substack{\sum_{j_i=1}^{k_i}x_{ij_i}=x_i\\
			x_{ij_i\in\mathbb{N}_0}}}\prod_{j_i=1}^{k_i}\frac{(\lambda_{ij_i}u_i^{j_i})^{x_{ij_i}}}{x_{ij_i}!}\nonumber\\
	&=\sum_{x=0}^{\infty}\sum_{r=x}^{\infty}\frac{r!(-\lambda t^\beta)^r }{(r-x)!\Gamma(\beta r+1)}\Big(-\frac{1}{\lambda}\Big)^x\sum_{\substack{\sum_{i=1}^{q}x_i=x\\x_i\in\mathbb{N}_0}}\prod_{i=1}^{q}\sum_{\substack{\sum_{j_i=1}^{k_i}x_{ij_i}=x_i\\
			x_{ij_i\in\mathbb{N}_0}}}\prod_{j_i=1}^{k_i}\frac{(\lambda_{ij_i}u_i^{j_i})^{x_{ij_i}}}{x_{ij_i}!}\nonumber\\
	&=\sum_{x=0}^{\infty}\sum_{r=0}^{\infty}\frac{(r+x)!(-\lambda t^\beta)^r}{r!\Gamma(\beta r+\beta x+1)}t^{\beta x}\sum_{\substack{\sum_{i=1}^{q}x_i=x\\x_i\in\mathbb{N}_0}}\prod_{i=1}^{q}\sum_{\substack{\sum_{j_i=1}^{k_i}x_{ij_i}=x_i\\
			x_{ij_i\in\mathbb{N}_0}}}\prod_{j_i=1}^{k_i}\frac{(\lambda_{ij_i}u_i^{j_i})^{x_{ij_i}}}{x_{ij_i}!}\nonumber\\
	&=\sum_{x=0}^{\infty}x!E_{\beta,\,\beta x+1}^{x+1}(-\lambda t^\beta)t^{\beta x}\sum_{\substack{\sum_{i=1}^{q}x_i=x\\x_i\in\mathbb{N}_0}}\prod_{i=1}^{q}\sum_{\substack{\sum_{j_i=1}^{k_i}x_{ij_i}=x_i\\
			x_{ij_i\in\mathbb{N}_0}}}\prod_{j_i=1}^{k_i}\frac{(\lambda_{ij_i}u_i^{j_i})^{x_{ij_i}}}{x_{ij_i}!}\nonumber\\
	&=\sum_{\substack{x_i\geq0\\ 1\leq i\leq q}} \Big(\sum_{i=1}^{q}x_i\Big)! E_{\beta,\,\beta\sum_{i=1}^{q}x_i +1}^{\sum_{i=1}^{q}x_i+1}(-\lambda t^\beta)t^{\beta\sum_{i=1}^{q}x_i} \prod_{i=1}^{q}\sum_{\substack{\sum_{j_i=1}^{k_i}x_{ij_i}=x_i\\
			x_{ij_i\in\mathbb{N}_0}}}\prod_{j_i=1}^{k_i}\frac{(\lambda_{ij_i}u_i^{j_i})^{x_{ij_i}}}{x_{ij_i}!}\nonumber\\
	&=\sum_{\substack{n_i\ge0\\i=1,2,\dots,q}}\Big(\prod_{i=1}^{q}u_i^{n_i}\Big)\sum_{\substack{\Omega(k_i,n_i)\\i=1,2,\dots,q}}\Big(\sum_{i=1}^{q}\sum_{j_i=1}^{k_i}x_{ij_i}\Big)!E_{\beta,\,\beta\sum_{i=1}^{q}\sum_{j_i=1}^{k_i}x_{ij_i} +1}^{\sum_{i=1}^{q}\sum_{j_i=1}^{k_i}x_{ij_i}+1}(-\lambda t^\beta) \prod_{i=1}^{q}\prod_{j_i=1}^{k_i}\frac{(\lambda_{ij_i}t^{\beta})^{x_{ij_i}}}{x_{ij_i}!},\label{&&}
\end{align}}
Also, we have
\begin{equation}\label{&}
	H^\beta(\bar{u},t)=\sum_{\substack{n_i\geq 0\\  i=1,2,\dots, q}}\Big(\prod_{i=1}^{q}u_i^{n_i}\Big)q^\beta(\bar{n},t).
\end{equation}
Finally, the result follows on comparing the coefficient of $u_1^{n_1}u_2^{n_2}\dots u_q^{n_q}$ over the range $\bar{n}\ge\bar{0}$ on both sides of \eqref{&&} and \eqref{&}.
\end{proof}
\begin{remark}
For $k_1=k_2=\dots=k_q=1$, the pmf \eqref{invpmf} reduces to that of multivariate time fractional Poisson process (see Beghin and Macci (2016), p. 701). Moreover, For $q=1$, the pmf of MGFCP reduces to that of GFCP (see Di Crescenzo \textit{et al.} (2016), Eq. (2.8)).
\end{remark}
\begin{proposition}\label{prpcov}
Let $0<\beta<1$. For any $i=1,2\dots,q$ and $l=1,2\dots,q$, the covariance of $\{M_i(Y_\beta(t))\}_{t\ge0}$ and $\{M_l(Y_\beta(t))\}_{t\ge0}$ is given by 
\begin{equation*}
\operatorname{Cov}(M_i(Y_\beta(t)),M_l(Y_\beta(t)))=\frac{\sum_{j_i=1}^{k_i}j_i^2\lambda_{ij_i}t^{\beta}}{\Gamma(\beta+1)}\mathbb{I}_{\{i=l\}}+\sum_{j_i=1}^{k_i}\sum_{j_l=1}^{k_l}j_i\lambda_{ij_i}j_l\lambda_{lj_l}t^{2\beta}\bigg(\frac{2}{\Gamma(2\beta+1)}-\frac{1}{\Gamma^2(\beta+1)}\bigg).
\end{equation*}
\end{proposition}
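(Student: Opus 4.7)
The plan is to compute the covariance by conditioning on $Y_\beta(t)$ and exploiting the independence of the component GCPs $\{M_i(t)\}_{t\ge 0}$ noted in Remark 3.3. Writing
\begin{equation*}
\operatorname{Cov}(M_i(Y_\beta(t)),M_l(Y_\beta(t))) = \mathbb{E}[M_i(Y_\beta(t))M_l(Y_\beta(t))] - \mathbb{E}[M_i(Y_\beta(t))]\mathbb{E}[M_l(Y_\beta(t))],
\end{equation*}
I would first compute the marginal means. Using the GCP mean $\mathbb{E}(M_i(s)) = s\sum_{j_i=1}^{k_i}j_i\lambda_{ij_i}$ (the $\beta = 1$ specialisation of \eqref{meanvargfcp}) and the first moment $\mathbb{E}(Y_\beta(t)) = t^\beta/\Gamma(\beta+1)$ from \eqref{beghin}, conditioning on $Y_\beta(t)$ and applying the tower property gives $\mathbb{E}[M_i(Y_\beta(t))] = \frac{t^\beta}{\Gamma(\beta+1)}\sum_{j_i=1}^{k_i}j_i\lambda_{ij_i}$.

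For the mixed second moment I would split into two cases. When $i\ne l$, the independence of $M_i$ and $M_l$ yields $\mathbb{E}[M_i(s)M_l(s)] = s^2\bigl(\sum_{j_i}j_i\lambda_{ij_i}\bigr)\bigl(\sum_{j_l}j_l\lambda_{lj_l}\bigr)$, so conditioning on $Y_\beta(t)$ and using $\mathbb{E}(Y_\beta^2(t)) = 2t^{2\beta}/\Gamma(2\beta+1)$ from \eqref{beghin} gives $\mathbb{E}[M_i(Y_\beta(t))M_l(Y_\beta(t))] = \frac{2t^{2\beta}}{\Gamma(2\beta+1)}\bigl(\sum_{j_i}j_i\lambda_{ij_i}\bigr)\bigl(\sum_{j_l}j_l\lambda_{lj_l}\bigr)$. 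When $i = l$, I would instead invoke the GCP second moment $\mathbb{E}(M_i^2(s)) = s\sum_{j_i}j_i^2\lambda_{ij_i} + s^2\bigl(\sum_{j_i}j_i\lambda_{ij_i}\bigr)^2$ (which follows from $\operatorname{Var}(M_i(s)) = s\sum_{j_i}j_i^2\lambda_{ij_i}$, the $\beta = 1$ specialisation of \eqref{meanvargfcpvv}), producing an additional linear-in-$Y_\beta(t)$ term $\frac{t^\beta}{\Gamma(\beta+1)}\sum_{j_i}j_i^2\lambda_{ij_i}$ after taking expectation.

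Subtracting the product of the marginal means from the mixed second moment collects the $t^{2\beta}$ contributions into the factor $\frac{2}{\Gamma(2\beta+1)}-\frac{1}{\Gamma^2(\beta+1)}$, while the diagonal linear term is carried only when $i=l$, which can be recorded compactly by the indicator $\mathbb{I}_{\{i=l\}}$. Note that for $i = l$ the double sum $\sum_{j_i}\sum_{j_l}j_i\lambda_{ij_i}j_l\lambda_{lj_l}$ automatically collapses to $\bigl(\sum_{j_i}j_i\lambda_{ij_i}\bigr)^2$, so the two cases glue into the single advertised formula.

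The argument is essentially mechanical and I do not anticipate any genuine obstacle; the only point requiring care is bookkeeping in the $i = l$ case, where one must ensure that the GCP variance term (not just the square of the mean) is carried through the conditioning and that it appears precisely in the diagonal.
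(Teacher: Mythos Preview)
Your proposal is correct and follows essentially the same conditioning argument as the paper. The only cosmetic difference is that for the diagonal case $i=l$ the paper simply quotes the known GFCP variance formula \eqref{meanvargfcpvv} directly, whereas you re-derive it by conditioning on $Y_\beta(t)$ and invoking the GCP second moment; both arrive at the same expression with the same ingredients.
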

\begin{proof}
For $i=l$, we have
\begin{equation*}
\operatorname{Cov}(M_i(Y_\beta(t)),M_l(Y_\beta(t)))=\frac{\sum_{j_i=1}^{k_i}j_i^2\lambda_{ij_i}t^{\beta}}{\Gamma(\beta+1)}+\bigg(\sum_{j_i=1}^{k_i}j_i\lambda_{ij_i}t^{\beta}\bigg)^2\bigg(\frac{2}{\Gamma(2\beta+1)}-\frac{1}{\Gamma^2(\beta+1)}\bigg)
\end{equation*}
which follows from \eqref{meanvargfcpvv}.
For $i\ne l$, we have
\begin{align*}
\mathbb{E}(M_i(Y_\beta(t))M_l(Y_\beta(t)))&=\mathbb{E}(\mathbb{E}(M_i(Y_\beta(t))M_l(Y_\beta(t))|Y_\beta(t)))\nonumber\\
&=\mathbb{E}\bigg(\sum_{j_i=1}^{k_i}\sum_{j_l=1}^{k_l}j_i\lambda_{ij_i}j_l\lambda_{lj_l}Y_\beta^2(t)\bigg)\nonumber\\
&=\sum_{j_i=1}^{k_i}\sum_{j_l=1}^{k_l}j_i\lambda_{ij_i}j_l\lambda_{lj_l}\mathbb{E}(Y_\beta^2(t))\\
&=\sum_{j_i=1}^{k_i}\sum_{j_l=1}^{k_l}j_i\lambda_{ij_i}j_l\lambda_{lj_l}\frac{2t^{2\beta}}{\Gamma(2\beta+1)},
\end{align*}
where we have used \eqref{beghin}.
Thus, on using  \eqref{meanvargfcp}, we get
\begin{equation*}
\operatorname{Cov}(M_i(Y_\beta(t)),M_l(Y_\beta(t)))=\sum_{j_i=1}^{k_i}\sum_{j_l=1}^{k_l}j_i\lambda_{ij_i}j_l\lambda_{lj_l}t^{2\beta}\bigg(\frac{2}{\Gamma(2\beta+1)}-\frac{1}{\Gamma^2(\beta+1)}\bigg).
\end{equation*}
This proves the result.
\end{proof}
Next, we obtain the codifference of  $\{M_i(Y_\beta(t))\}_{t\ge0}$ and $\{M_l(Y_\beta(t))\}_{t\ge0}$. It is defined as follows (see Kokoszka and Taqqu (1996), Eq. (1.7)):  
\begin{equation}\label{cod}
\tau(M_i(Y_\beta(t)),M_l(Y_\beta(t))):=\ln\mathbb{E}(e^{\omega(M_i(Y_\beta(t))-M_l(Y_\beta(t)))})-\ln\mathbb{E}(e^{\omega M_i(Y_\beta(t))})-\ln\mathbb{E}(e^{-\omega M_l(Y_\beta(t))}),
\end{equation} 
where $\omega=\sqrt{-1}$ and $1\le i,j \le q$. The following result will be used (see Di Crescenzo \textit{et al.} (2016), Eq. (2.5)):
\begin{equation}\label{rprp}
	\mathbb{E}(e^{\omega uM_i(Y_\beta(t))})=E_{\beta,1}\bigg(-t^\beta\bigg(\sum_{j_i=1}^{k_i}\lambda_{ij_i}(1-e^{\omega uj_i})\bigg)\bigg),\, \, u\in\mathbb{R}.
\end{equation}
\begin{proposition}\label{prpcod}
For any $i=1,2,\dots,q$ and $l=1,2,\dots,q$, the codifference of $\{M_i^\beta(t)\}_{t\ge0}$ and $\{M_l^\beta(t)\}_{t\ge0}$ is given by
{\small\begin{align*}
\tau(M_i(Y_\beta(t)),M_l(Y_\beta(t)))&=\ln E_{\beta,1}\bigg(-t^\beta\bigg(\sum_{j_i=1}^{k_i}\lambda_{ij_i}(1-e^{\omega j_i})+\sum_{j_l=1}^{k_l}\lambda_{lj_l}(1-e^{-\omega j_l})\bigg)\bigg)\mathbb{I}_{\{i\ne l\}}\\
&\ \ -\ln E_{\beta,1}\bigg(-t^\beta\bigg(\sum_{j_i=1}^{k_i}\lambda_{ij_i}(1-e^{\omega j_i})\bigg)\bigg)-\ln E_{\beta,1}\bigg(-t^\beta\bigg(\sum_{j_l=1}^{k_l}\lambda_{lj_l}(1-e^{-\omega j_l})\bigg)\bigg).
\end{align*}}
\end{proposition}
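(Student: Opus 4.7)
The plan is to compute each of the three logarithmic terms on the right-hand side of \eqref{cod} separately and then recombine. The two one-dimensional terms $\ln\mathbb{E}(e^{\omega M_i(Y_\beta(t))})$ and $\ln\mathbb{E}(e^{-\omega M_l(Y_\beta(t))})$ are immediate: setting $u=1$ and $u=-1$ in \eqref{rprp} yields exactly the two $-\ln E_{\beta,1}(\cdot)$ contributions appearing in the stated conclusion.

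The substantive computation is the joint term $\ln\mathbb{E}(e^{\omega(M_i(Y_\beta(t))-M_l(Y_\beta(t)))})$. When $i=l$ the difference is identically zero, so this logarithm vanishes, which is precisely why the indicator $\mathbb{I}_{\{i\ne l\}}$ multiplies only the first term in the statement. For $i\ne l$, I would condition on $Y_\beta(t)=s$. By Remark \ref{remk}, the component GCPs $\{M_i(t)\}_{t\ge 0}$ and $\{M_l(t)\}_{t\ge 0}$ are independent, so the conditional expectation factorises, and applying the GCP pgf \eqref{pgfgcp} with $u=e^{\omega}$ to $M_i(s)$ and with $u=e^{-\omega}$ to $M_l(s)$ gives
\[
\mathbb{E}\bigl(e^{\omega(M_i(s)-M_l(s))}\bigr)=\exp\!\Bigl(-s\Bigl(\sum_{j_i=1}^{k_i}\lambda_{ij_i}(1-e^{\omega j_i})+\sum_{j_l=1}^{k_l}\lambda_{lj_l}(1-e^{-\omega j_l})\Bigr)\Bigr).
\]
Taking the outer expectation over $Y_\beta(t)$ and applying the Laplace transform $\mathbb{E}(e^{-sY_\beta(t)})=E_{\beta,1}(-st^\beta)$ of the inverse $\beta$-stable subordinator then produces the Mittag-Leffler expression in the stated formula, and assembling the three pieces completes the proof.

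I do not expect any genuine obstacle; the whole derivation is a chain of standard conditioning and transform identities. The only subtlety is to note that the identity $\mathbb{E}(e^{-sY_\beta(t)})=E_{\beta,1}(-st^\beta)$, ostensibly stated for $s>0$, extends by analyticity to complex arguments with non-negative real part. The arguments arising above are of this form, since $\mathrm{Re}(1-e^{\pm\omega j})=1-\cos j\ge 0$ for each $j\in\mathbb{N}$, so the transform identity may legitimately be invoked with the complex Laplace argument that appears after conditioning.
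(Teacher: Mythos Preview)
Your proposal is correct and follows essentially the same approach as the paper: condition on $Y_\beta(t)$, use the independence of the component GCPs (Remark~\ref{remk}) together with the GCP pgf to compute the conditional joint characteristic function, and then apply the Laplace transform of the inverse $\beta$-stable subordinator to obtain the Mittag-Leffler expression. Your remark on the analytic extension of the Laplace identity to complex arguments with non-negative real part is a valid technical point that the paper leaves implicit.
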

\begin{proof}
For $i\ne l$, we have
\begin{align*}
\mathbb{E}(e^{\omega(M_i(Y_\beta(t))-M_l(Y_\beta(t)))})&=\mathbb{E}\bigg(\mathbb{E}(e^{\omega M_i(Y_\beta(t))}e^{-\omega M_l(Y_\beta(t))})|Y_\beta(t)\bigg)\\
&=\mathbb{E}\bigg(\exp\bigg(\bigg(\sum_{j_i=1}^{k_i}\lambda_{ij_i}(e^{\omega j_i}-1)+\sum_{j_l=1}^{k_l}\lambda_{lj_l}(e^{-\omega j_l}-1)\bigg)Y_\beta(t)\bigg)\bigg)\\
&=E_{\beta,1}\bigg(-t^\beta\bigg( \sum_{j_i=1}^{k_i}\lambda_{ij_i}(1-e^{\omega j_i})+\sum_{j_l=1}^{k_l}\lambda_{lj_l}(1-e^{-\omega j_l}) \bigg)\bigg), 
\end{align*}
where the last step follows on using the Laplace transform of inverse $\beta$-stable subordinator (see Section $\ref{seci}$).

For $i=l$, we get
 $\mathbb{E}(e^{\omega(M_i(Y_\beta(t))-M_l(Y_\beta(t)))})=1$. Finally, the result follows on using \eqref{cod} and \eqref{rprp}.
\end{proof}
\begin{remark}
For $k_i=k_l=1$, the covariance in Proposition $\ref{prpcov}$ and the codifference in Proposition $\ref{prpcod}$ reduces to a known result of  Beghin and Macci (2016), Proposition 5. 
\end{remark}

\begin{remark}
	Let us consider a process $\{\bar{\mathscr{G}}^\beta(t)\}_{t\ge0}$, $\beta\in(0,1]$ such that its state probabilities $q_{\bar{\mathscr{G}}}^\beta(\bar{n},t)$, $\bar{n}\ge\bar{0}$ satisfy the following system of differential equations:
	\begin{equation}\label{invpmfdef}
		\frac{\mathrm{d}^\beta}{\mathrm{d}t^\beta}q_{\bar{\mathscr{G}}}^\beta(\bar{n},t)=-\lambda q_{\bar{\mathscr{G}}}^\beta(\bar{n},t)+\sum_{i=1}^{q}\sum_{j_i=1}^{k_i}\lambda_{ij_i}q_{\bar{\mathscr{G}}}^\beta(\bar{n}-\bar{\epsilon}^{j_i}_i,t),
	\end{equation}
	where $\lambda=\sum_{i=1}^{q}\sum_{j_i=1}^{k_i}\lambda_{ij_i}$ and the initial condition is $q_{\bar{\mathscr{G}}}^\beta(\bar{n},0)=\mathbb{I}_{\{\bar{n}=\bar{0}\}}$.
	
	\paragraph{Case I}
	For $\beta\in(0,1)$, 
	the pgf $H_{\bar{\mathscr{G}}}^\beta(\bar{u},t)$ of $\{\bar{\mathscr{G}}^\beta(t)\}_{t\ge0}$ satisfies
	\begin{equation*}
		\frac{\partial^\beta}{\partial t^\beta}H_{\bar{\mathscr{G}}}^\beta(\bar{u},t)=-\sum_{i=1}^{q}\sum_{j_i=1}^{k_i}\lambda_{ij_i}(1-u_i^{j_i})H_{\bar{\mathscr{G}}}^\beta(\bar{u},t),\,\, H_{\bar{\mathscr{G}}}^\beta(\bar{u},0)=1.
	\end{equation*} 
	So,
	$
		H_{\bar{\mathscr{G}}}^\beta(\bar{u},t)=E_{\beta,1}(-t^\beta\sum_{i=1}^{q}\sum_{j_i=1}^{k_i}\lambda_{ij_i}(1-u_i^{j_i})), \, |u_i|\le1.
$	Thus, for $\beta\in(0,1)$, we have
	\begin{equation*}
		\bar{\mathscr{G}}^\beta(t)\overset{d}{=}\bar{\mathcal{M}}^\beta(t).
	\end{equation*}
	
	\paragraph{Case II}
	For $\beta=1$ in \eqref{invpmfdef}, we get the system of governing differential equations of the pmf of MGCP. So, the pgf $H_{\bar{\mathscr{G}}}^1(\bar{u},t)$ reduces to the pgf of MGCP. Thus,  $\bar{\mathscr{G}}^1(t)\overset{d}{=}\bar{M}(t)$.
\end{remark}
\subsection{Multivariate generalized space-time fractional counting process} A space-time fractional version of the GCP is introduced and studied by Kataria \textit{et al.} (2022). Here, we obtain a multivariate version of it by time changing MGSFCP with an independent inverse $\beta$-stable subordinator. We call it the multivariate generalized space-time fractional counting process (MGSTFCP) and denote it by  $\{\bar{M}^{\alpha,\beta}(t)\}_{t\geq0}$, $0<\alpha<1$, $0<\beta<1$. It is defined as follows:
\begin{equation}\label{mstfgcp}
\bar{M}^{\alpha,\beta}(t)\coloneqq \bar{M}(D_\alpha(Y_\beta(t))),\ t\geq0,
\end{equation}
where $\{D_\alpha(t)\}_{t\geq0}$, $\{Y_\beta(t)\}_{t\geq0}$ and $\{M_i(t)\}_{t\geq0}$, $i=1,2,\dots,q$  are independent. 
\begin{remark}
From Remark $\ref{remk}$, it follows that the component processes  $\{M_i^{\alpha,\beta}(t)\}_{t\ge0}$, $i\in\{1,2,\dots,q\}$ are conditionally independent given $\{D_\alpha(Y_\beta(t))\}_{t\geq0}$. 
\end{remark}

On using Eq. (4) of Beghin and Macci (2016) and Proposition 2.2 of Di Crescenzo \textit{et al.} (2016), we get the pgf  $G^{\alpha,\beta}(\bar{u},t)=\mathbb{E}(u_1^{M_1^{\alpha,\beta}(t)}u_2^{M_2^{\alpha,\beta}(t)}\dots u_q^{M_q^{\alpha,\beta}(t)})$, $|u_i|\leq 1$ of $\{\bar{M}^{\alpha,\beta}(t)\}_{t\geq0}$ in the following form:
\begin{align}
G^{\alpha,\beta}(\bar{u},t)
%&=\mathbb{E}(\mathbb{E}(u_1^{M_1^{\alpha,\beta}(t)}\dots u_q^{M_q^{\alpha,\beta}(t)}|D_\alpha(Y_\beta(t))))\nonumber\\
%&=\mathbb{E}\bigg(\exp\bigg(\sum_{i=1}^{q}\sum_{j_i=1}^{k_i}\lambda_{ij_i}(u_i^{j_i}-1)D_\alpha(Y_\beta(t))\bigg)\bigg)\nonumber\\
&=E_{\beta,1}\Bigg(-t^\beta\Bigg(\sum_{i=1}^{q}\sum_{j_i=1}^{k_i}\lambda_{ij_i}(1-u_i^{j_i})\Bigg)^{\alpha}\Bigg).\label{pgfmstfgcp}
\end{align}
 So, we have
\begin{equation*}
\frac{\partial^\beta}{\partial t^\beta}G^{\alpha,\beta}(\bar{u},t)=-\bigg(\sum_{i=1}^{q}\sum_{j_i=1}^{k_i}\lambda_{ij_i}(1-u_i^{j_i})\bigg)^\alpha G^{\alpha,\beta}(\bar{u},t),\ \ G^{\alpha,\beta}(\bar{u},0)=1,
\end{equation*}
which follows from the fact that the Mittag-Leffler function is an eigenfunction of the Caputo fractional derivative.

Orsingher and Polito (2012) gave a relation  between the pgf of space-time fractional Poisson process and iid uniform random variables. A similar result holds true for the case of the pgf of MGSTFCP.

\begin{theorem}
Let $X_l$, $l\geq1$ be iid uniform random variables in $[0,1]$. Then, the pgf $G^{\alpha,\beta}(\bar{u},t)$ has the following representation:
\begin{equation*}
G^{\alpha,\beta}(\bar{u},t)=\mathrm{Pr}\bigg\{\min_{0\leq l\leq N_\beta(t)}X_l^{1/\alpha}\geq 1-\frac{1}{\lambda}\sum_{i=1}^{q}\sum_{j_i=1}^{k_i}\lambda_{ij_i}u_i^{j_i}\bigg\},\, 0<u_i<1,
\end{equation*}
where $\lambda=\sum_{i=1}^{q}\sum_{j_i=1}^{k_i}\lambda_{ij_i}$ and $\{N_\beta(t)\}_{t\geq0}$ is a time fractional Poisson process with intensity $\lambda^\alpha$ such that $\min_{1\leq l\leq N_\beta(t)}X_l^{1/\alpha}=1$, $0<\alpha<1$ whenever $N_\beta(t)=0$. 
\end{theorem}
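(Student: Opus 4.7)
The strategy is to mirror the argument given for $G^{\alpha}(\bar{u},t)$ in the preceding proposition, replacing the ordinary Poisson process by the time fractional Poisson process $\{N_\beta(t)\}_{t\ge0}$ of rate $\lambda^\alpha$. Starting from \eqref{pgfmstfgcp}, I would first factor $\lambda^\alpha$ out of the inner sum to obtain
\begin{equation*}
G^{\alpha,\beta}(\bar{u},t)=E_{\beta,1}\bigl(-\lambda^\alpha t^\beta(1-v)^\alpha\bigr),\qquad v:=\frac{1}{\lambda}\sum_{i=1}^{q}\sum_{j_i=1}^{k_i}\lambda_{ij_i}u_i^{j_i},
\end{equation*}
noting that $0<v<1$ whenever $0<u_i<1$, so $(1-v)^\alpha$ and $w:=1-(1-v)^\alpha$ both lie in $(0,1)$.

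The key step is to recognize the right-hand side as the pgf of $\{N_\beta(t)\}_{t\ge0}$ evaluated at $w$. Conditioning on $Y_\beta(t)$ and applying its Laplace transform (see Section \ref{seci}) yields the identity $\mathbb{E}(w^{N_\beta(t)})=E_{\beta,1}(-\lambda^\alpha t^\beta(1-w))$, so the substitution $w=1-(1-v)^\alpha$ aligns the two Mittag-Leffler expressions and gives
\begin{equation*}
G^{\alpha,\beta}(\bar{u},t)=\mathbb{E}\bigl((1-(1-v)^\alpha)^{N_\beta(t)}\bigr).
\end{equation*}
Next, I would interpret $1-(1-v)^\alpha$ probabilistically. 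For $X_l\sim\mathrm{Uniform}[0,1]$, one has $\mathrm{Pr}\{X_l^{1/\alpha}\ge 1-v\}=\mathrm{Pr}\{X_l\ge(1-v)^\alpha\}=1-(1-v)^\alpha$, and by independence, conditioning on $N_\beta(t)=r$ yields $(1-(1-v)^\alpha)^r=\mathrm{Pr}\{\min_{1\le l\le r}X_l^{1/\alpha}\ge 1-v\}$. Summing against the pmf of $\{N_\beta(t)\}_{t\ge0}$ then produces the claimed representation, with the convention that $\min =1$ at $N_\beta(t)=0$ matching $w^0=1$.

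The only point that differs substantively from the proof of the stable case is that the Mittag-Leffler function admits no analogue of the exponential factorization $e^{a+b}=e^a e^b$, so the direct series-and-Poisson-pmf expansion used there is unavailable. The natural substitute is precisely the pgf identity for $\{N_\beta(t)\}_{t\ge0}$ recorded above, which reduces the rest of the proof to a routine verification; I do not anticipate any further obstacle.
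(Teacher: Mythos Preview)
Your argument is correct and in fact a bit cleaner than the paper's. The paper proceeds in the opposite direction: it starts from the probability on the right-hand side, conditions on $N_\beta(t)=r$ to get $\sum_{r\ge0}(1-(1-v)^\alpha)^r\,\mathrm{Pr}\{N_\beta(t)=r\}$, then inserts the explicit alternating-series form of the time fractional Poisson pmf
\[
\mathrm{Pr}\{N_\beta(t)=r\}=\sum_{x\ge r}(-1)^{x-r}\binom{x}{r}\frac{(\lambda^\alpha t^\beta)^x}{\Gamma(\beta x+1)},
\]
swaps the two sums, and applies the binomial theorem in $r$ to collapse the inner sum to $(1-v)^{\alpha x}$, recovering the Mittag-Leffler series for $G^{\alpha,\beta}(\bar{u},t)$. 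You instead invoke the closed-form pgf identity $\mathbb{E}(w^{N_\beta(t)})=E_{\beta,1}(-\lambda^\alpha t^\beta(1-w))$ once and match it directly to \eqref{pgfmstfgcp} via $w=1-(1-v)^\alpha$; this bypasses the explicit pmf and the sum interchange entirely. Your route is shorter and more conceptual, while the paper's is more self-contained at the level of series manipulations; both are valid.
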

\begin{proof}
Observe that
{\footnotesize\begin{align*}
\mathrm{Pr}\Big\{\min_{0\leq l\leq N_\beta(t)}X_l^{1/\alpha}\geq 1-\frac{1}{\lambda}\sum_{i=1}^{q}\sum_{j_i=1}^{k_i}\lambda_{ij_i}u_i^{j_i}\Big\}&=\sum_{r=0}^{\infty}\Big(\mathrm{Pr}\Big\{X_l^{1/\alpha}\geq 1-\frac{1}{\lambda}\sum_{i=1}^{q}\sum_{j_i=1}^{k_i}\lambda_{ij_i}u_i^{j_i}\Big\}\Big)^r\mathrm{Pr}\{N_\beta(t)=r\}\\
&=\sum_{r=0}^{\infty}\Big(1-\Big(1-\frac{1}{\lambda}\sum_{i=1}^{q}\sum_{j_i=1}^{k_i}\lambda_{ij_i}u_i^{j_i}\Big)^\alpha\Big)^r\sum_{x=r}^{\infty}(-1)^{x-r}\binom{x}{r}\frac{(\lambda^\alpha t^\beta)^x}{\Gamma(\beta x+1)}\\
&=\sum_{x=0}^{\infty}\frac{(-1)^x(\lambda^\alpha t^\beta)^x}{\Gamma(\beta x+1)}\sum_{r=0}^{x}\binom{x}{r}(-1)^r\Big(1-\Big(1-\frac{1}{\lambda}\sum_{i=1}^{q}\sum_{j_i=1}^{k_i}\lambda_{ij_i}u_i^{j_i}\Big)^\alpha\Big)^r\\
&=\sum_{x=0}^{\infty}\frac{(-1)^x(\lambda^\alpha t^\beta)^x}{\Gamma(\beta x+1)}\Big(1-\frac{1}{\lambda}\sum_{i=1}^{q}\sum_{j_i=1}^{k_i}\lambda_{ij_i}u_i^{j_i}\Big)^{\alpha x}\\
&=E_{\beta,1}\Big(-t^\beta\Big(\sum_{i=1}^{q}\sum_{j_i=1}^{k_i}\lambda_{ij_i}(1-u_i^{j_i})\Big)^\alpha\Big)
\end{align*}}
which coincides with \eqref{pgfmstfgcp}. This establishes the result.
\end{proof}
%The following result gives system of governing differential equations of the state probabilities $p^{\alpha,\beta}(\bar{n},t)$ of MGSTFCP. 

The proof of following result is similar to that of Proposition $\ref{prppp}$. Hence, it is omitted.
 
\begin{proposition}
The pmf $p^{\alpha,\beta}(\bar{n},t)=\mathrm{Pr}\{\bar{M}^{\alpha,\beta}(t)=\bar{n}\}$, $\bar{n}\ge \bar{0}$ solves the following system of fractional differential equations:
\begin{equation*}\label{invstabdef}
\frac{\mathrm{d}^\beta}{\mathrm{d}t^\beta}p^{\alpha,\beta}(\bar{n},t)=-\sum_{\substack{m_i\ge0\\i=1,2,\dots,q}}\sum_{\substack{\Omega(k_i,m_i)\\ i=1,2,\dots, q}}\frac{\lambda^\alpha\Gamma(\alpha+1)p^{\alpha,\beta}(\bar{n}-\bar{m},t)}{\Gamma(\alpha-\sum_{i=1}^{q}\sum_{j_i=1}^{k_i}x_{ij_i}+1)}\prod_{i=1}^{q}\prod_{j_i=1}^{k_i}\frac{(-\lambda_{ij_i}/\lambda)^{x_{ij_i}}}{x_{ij_i}!},
\end{equation*}
 with initial condition $p^{\alpha,\beta}(\bar{n},0)=\mathbb{I}_{\{\bar{n}=\bar{0}\}}$. Here, $\Omega(k_i,m_i)=\{(x_{i1},x_{i2},\dots,x_{ik_i}):\sum_{j_i=1}^{k_i}j_ix_{ij_i}=m_i,\ x_{ij_i}\in\mathbb{N}_0\}$.
\end{proposition}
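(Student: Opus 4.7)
The plan is to mirror the alternate Laplace-transform argument already used for the MGFCP in the derivation of \eqref{invpmfde} (equations \eqref{CCCC}--\eqref{gh}), applied now to the subordinated representation $\bar{M}^{\alpha,\beta}(t)=\bar{M}^{\alpha}(Y_\beta(t))$ obtained by composing the first two time changes in \eqref{mstfgcp}. Conditioning on $Y_\beta(t)$ and using independence gives
\[
p^{\alpha,\beta}(\bar{n},t)=\int_{0}^{\infty}p^{\alpha}(\bar{n},x)\,g_\beta(t,x)\,\mathrm{d}x,
\]
where $g_\beta(t,\cdot)$ is the density of $Y_\beta(t)$. Taking the Laplace transform in $t$ and using Eq.~(3.13) of Meerschaert and Scheffler (2008), namely $\mathcal{L}\{g_\beta(t,x)\}(s)=s^{\beta-1}e^{-s^\beta x}$, yields the subordination identity $\tilde{p}^{\alpha,\beta}(\bar{n},s)=s^{\beta-1}\tilde{p}^{\alpha}(\bar{n},s^\beta)$.

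Next, I would apply the Laplace transform in $t$ to the governing system for $p^{\alpha}(\bar{n},t)$ supplied by Proposition \ref{prppp}, obtaining an algebraic identity that expresses $s\tilde{p}^{\alpha}(\bar{n},s)-p^{\alpha}(\bar{n},0)$ as the weighted $\Omega(k_i,m_i)$-sum of $\tilde{p}^{\alpha}(\bar{n}-\bar{m},s)$ appearing in that proposition. Substituting $s\mapsto s^\beta$ throughout and multiplying both sides by $s^{\beta-1}$ converts every $\tilde{p}^{\alpha}(\cdot,s^\beta)$ into $\tilde{p}^{\alpha,\beta}(\cdot,s)$ by the subordination identity, while the left-hand side becomes
\[
s^\beta\tilde{p}^{\alpha,\beta}(\bar{n},s)-s^{\beta-1}p^{\alpha,\beta}(\bar{n},0),
\]
which is precisely the Laplace transform of $\frac{\mathrm{d}^\beta}{\mathrm{d}t^\beta}p^{\alpha,\beta}(\bar{n},t)$ with the initial value $p^{\alpha,\beta}(\bar{n},0)=\mathbb{I}_{\{\bar{n}=\bar{0}\}}$ inherited from $p^{\alpha}(\bar{n},0)$. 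Inverting the Laplace transform then delivers the stated system.

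I expect the only delicate point to be bookkeeping, namely justifying termwise Laplace inversion of the \emph{a priori} infinite outer sum over $\bar{m}\ge\bar{0}$. This is harmless because $p^{\alpha,\beta}(\bar{n}-\bar{m},t)=0$ whenever $\bar{n}\prec\bar{m}$, so for fixed $\bar{n}$ the effective sum collapses to finitely many terms $\bar{m}\le\bar{n}$. As a consistency check one can proceed entirely on the pgf side: starting from $\frac{\partial^\beta}{\partial t^\beta}G^{\alpha,\beta}(\bar{u},t)=-\bigl(\sum_{i,j_i}\lambda_{ij_i}(1-u_i^{j_i})\bigr)^\alpha G^{\alpha,\beta}(\bar{u},t)$, expand the $\alpha$-power by the binomial and multinomial expansions exactly as in the proof of Theorem \ref{thmyy}, and compare the coefficient of $\prod_{i=1}^{q}u_i^{n_i}$ on both sides; the same governing system emerges, confirming the calculation.
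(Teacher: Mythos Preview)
Your proposal is correct. The paper itself omits the proof, saying only that it is ``similar to that of Proposition \ref{prppp}'' (the MGSFCP case). Since Proposition \ref{prppp} has two proofs---one via transition probabilities and difference quotients, and one (Appendix A1) via expanding the pgf equation \eqref{zzeq} and matching coefficients---and the former does not adapt directly to the Caputo derivative, the intended argument is almost certainly the Appendix A1 route: expand $\frac{\partial^\beta}{\partial t^\beta}G^{\alpha,\beta}(\bar{u},t)=-\big(\sum_{i,j_i}\lambda_{ij_i}(1-u_i^{j_i})\big)^\alpha G^{\alpha,\beta}(\bar{u},t)$ by the binomial and multinomial series and compare coefficients of $\prod_i u_i^{n_i}$. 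This is exactly what you sketch as your ``consistency check''.

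Your primary approach---the Laplace-transform subordination argument lifted from the MGFCP proof of \eqref{invpmfde}---is a genuinely different but equally valid route. It has the advantage of being systematic (it works whenever the outer time change is an inverse stable subordinator and the inner process has a known governing ODE) and it avoids the multinomial bookkeeping of the pgf expansion. The paper's pgf-coefficient method, by contrast, stays closer to the explicit structure of the fractional operator and makes the finiteness of the sum over $\bar{m}$ (which you correctly flag) transparent from the start. Both yield the stated system with no gaps.
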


The next result gives an implicit expression for the pmf of MGSTFCP. 
\begin{proposition}\label{prppmf}
		For $\alpha,\beta\in(0,1)$, the pmf $p^{\alpha,\beta}(\bar{n},t)$, $\bar{n}\geq\bar{0}$ of MGSTFCP is given by
		\begin{equation*}
			p^{\alpha,\beta}(\bar{n},t)=	\sum_{\substack{\Omega(k_i,n_i)\\i=1,2,\dots,q}}\bigg(\prod_{i=1}^{q}\prod_{j_i=1}^{k_i}\frac{(-\lambda_{ij_i}\partial_{\lambda_{ij_i}})^{n_{ij_i}}}{n_{ij_i}!}\bigg)E_{\beta,1}(-\lambda^{\alpha}t^\beta),
		\end{equation*}
		where $\lambda=\sum_{i=1}^{q}\sum_{j_i=1}^{k_i}\lambda_{ij_i}$ and $\partial_ \lambda=\partial/\partial \lambda$.
	\end{proposition}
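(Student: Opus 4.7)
The plan is to extract the pmf directly from the pgf $G^{\alpha,\beta}(\bar u,t)$ in~\eqref{pgfmstfgcp} by a multivariate Taylor expansion in the parameters $\lambda_{ij_i}$. Set $\Phi(\lambda_{11},\ldots,\lambda_{qk_q}):=E_{\beta,1}(-t^\beta\lambda^\alpha)$, viewed as a function of the independent variables $\lambda_{ij_i}$ through the sum $\lambda=\sum_{i,j_i}\lambda_{ij_i}$. Since $\sum_{i,j_i}\lambda_{ij_i}(1-u_i^{j_i})$ is exactly what one obtains by substituting $\lambda_{ij_i}\mapsto\lambda_{ij_i}(1-u_i^{j_i})$ into $\lambda$, the pgf is simply
\[
G^{\alpha,\beta}(\bar u,t)=\Phi\big(\{\lambda_{ij_i}(1-u_i^{j_i})\}\big).
\]

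Next I would apply the multivariate Taylor formula to $\Phi$, expanded about the base point $\{\lambda_{ij_i}\}$ with shifts $-\lambda_{ij_i}u_i^{j_i}$:
\[
G^{\alpha,\beta}(\bar u,t)=\sum_{\{m_{ij_i}\}\ge 0}\prod_{i,j_i}\frac{(-\lambda_{ij_i}u_i^{j_i})^{m_{ij_i}}}{m_{ij_i}!}\prod_{i,j_i}\partial_{\lambda_{ij_i}}^{m_{ij_i}}E_{\beta,1}(-t^\beta\lambda^\alpha).
\]
Collecting the $u$-monomials by writing $\prod_{i,j_i}u_i^{j_i m_{ij_i}}=\prod_i u_i^{n_i}$ with $n_i=\sum_{j_i}j_i m_{ij_i}$, the sum over $\{m_{ij_i}\}$ splits into an outer sum over $\bar n\ge\bar 0$ and an inner sum over the sets $\Omega(k_i,n_i)$ for each $i$. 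This yields
\[
G^{\alpha,\beta}(\bar u,t)=\sum_{\bar n\ge\bar 0}\Big(\prod_{i=1}^{q}u_i^{n_i}\Big)\sum_{\substack{\Omega(k_i,n_i)\\ i=1,\ldots,q}}\prod_{i=1}^{q}\prod_{j_i=1}^{k_i}\frac{(-\lambda_{ij_i})^{n_{ij_i}}\partial_{\lambda_{ij_i}}^{n_{ij_i}}}{n_{ij_i}!}\,E_{\beta,1}(-\lambda^\alpha t^\beta).
\]
Finally, I would match the coefficients of $\prod_i u_i^{n_i}$ with the defining series $G^{\alpha,\beta}(\bar u,t)=\sum_{\bar n}\big(\prod_i u_i^{n_i}\big)p^{\alpha,\beta}(\bar n,t)$ to read off the asserted identity, under the convention that the symbol $(-\lambda_{ij_i}\partial_{\lambda_{ij_i}})^{n_{ij_i}}$ abbreviates $(-\lambda_{ij_i})^{n_{ij_i}}\partial_{\lambda_{ij_i}}^{n_{ij_i}}$, i.e.\ multiplication by the coefficient is performed after differentiation.

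The main obstacle will be justifying the interchange of the two infinite sums at the regrouping step. Working on the polydisc $|u_i|<1$ (which is enough, by analytic continuation, to determine the pmf coefficients), this follows from absolute convergence of the Mittag-Leffler series combined with the elementary bound $\bigl|\sum_{i,j_i}\lambda_{ij_i}(1-u_i^{j_i})\bigr|\le 2\lambda$, so that Fubini applies. A secondary book-keeping point is to verify that each $\partial_{\lambda_{ij_i}}$ acting on $E_{\beta,1}(-\lambda^\alpha t^\beta)$ reduces via the chain rule to $\partial_\lambda$ (because $\lambda$ is affine in every $\lambda_{ij_i}$); consequently the mixed partial $\prod_{i,j_i}\partial_{\lambda_{ij_i}}^{n_{ij_i}}$ collapses to a single $\sum_{i,j_i}n_{ij_i}$-th derivative with respect to $\lambda$, which is what makes the Taylor coefficients concrete and closes the argument.
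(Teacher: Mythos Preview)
Your argument is correct, but it proceeds along a different path than the paper. The paper works probabilistically: starting from the subordination $\bar{M}^{\alpha,\beta}(t)=\bar{M}(D_\alpha(Y_\beta(t)))$ and the explicit pmf \eqref{jopmf} of the MGCP, it writes $p^{\alpha,\beta}(\bar{n},t)$ as an expectation of a product of Poisson-type factors, invokes the pointwise identity $(\lambda s)^{n}e^{-\lambda s}=(-\lambda)^{n}\partial_\lambda^{\,n}e^{-\lambda s}$ (this is the Beghin--D'Ovidio formula cited there, and confirms your reading of the operator convention), pulls the differential operators through the expectation, and then identifies the residual expectation as $\mathbb{E}\big(e^{-\lambda D_\alpha(Y_\beta(t))}\big)=E_{\beta,1}(-\lambda^\alpha t^\beta)$. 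Your route instead starts from the pgf \eqref{pgfmstfgcp}, notices that it is the value of $\Phi(\{\mu_{ij_i}\}):=E_{\beta,1}\big(-t^\beta(\sum\mu_{ij_i})^\alpha\big)$ at the shifted arguments $\mu_{ij_i}=\lambda_{ij_i}(1-u_i^{j_i})$, and recovers the pmf by Taylor-expanding $\Phi$ about the base point $\{\lambda_{ij_i}\}$ and collecting monomials in $\bar u$.

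The trade-off is this: the paper's approach is more transparent about \emph{why} the formula holds (each factor $(\lambda s)^{n}e^{-\lambda s}/n!$ in the conditional pmf is literally a derivative of the Laplace kernel), and it never needs to worry about regrouping double series. Your approach is more self-contained analytically and bypasses the conditional-pmf computation entirely, but it does require the convergence check. On that point, your Fubini justification can be sharpened: since $E_{\beta,1}$ is entire and $x\mapsto x^\alpha$ is analytic on $\mathbb{C}\setminus(-\infty,0]$, the Taylor series of $\phi(x)=E_{\beta,1}(-t^\beta x^\alpha)$ about $x=\lambda$ has radius of convergence exactly $\lambda$; for $|u_i|<1$ one has $\big|\sum_{i,j_i}\lambda_{ij_i}u_i^{j_i}\big|<\lambda$, so the shifted argument lies strictly inside the disk and the regrouping is legitimate.
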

	\begin{proof}
		From \eqref{jopmf} and \eqref{mstfgcp}, we have 
		\begin{align*}
			p^{\alpha,\beta}(\bar{n},t)&=\mathbb{E}\Bigg(\sum_{\substack{\Omega(k_i,n_i)\\i=1,2,\dots,q}}\prod_{i=1}^{q}\prod_{j_i=1}^{k_i}\frac{(\lambda_{ij_i}D_\alpha(Y_\beta(t)))^{n_{ij_i}}}{n_{ij_i}!}e^{-\lambda_{ij_i}D_\alpha(Y_\beta(t))}\Bigg)\\
	&=\mathbb{E}\Bigg(\sum_{\substack{\Omega(k_i,n_i)\\i=1,2,\dots,q}}\prod_{i=1}^{q}\prod_{j_i=1}^{k_i}\frac{(-\lambda_{ij_i}\partial_{\lambda_{ij_i}})^{n_{ij_i}}}{n_{ij_i}!}e^{-\lambda_{ij_i} D_\alpha(Y_\beta(t))}\Bigg)\\
			&=\sum_{\substack{\Omega(k_i,n_i)\\i=1,2,\dots,q}}\bigg(\prod_{i=1}^{q}\prod_{j_i=1}^{k_i}\frac{(-\lambda_{ij_i}\partial_{\lambda_{ij_i}})^{n_{ij_i}}}{n_{ij_i}!}\bigg)\mathbb{E}\left(e^{-\lambda D_\alpha(Y_\beta(t))}\right)\\
			&=\sum_{\substack{\Omega(k_i,n_i)\\i=1,2,\dots,q}}\bigg(\prod_{i=1}^{q}\prod_{j_i=1}^{k_i}\frac{(-\lambda_{ij_i}\partial_{\lambda_{ij_i}})^{n_{ij_i}}}{n_{ij_i}!}\bigg)E_{\beta,1}(-\lambda^{\alpha}t^\beta),
		\end{align*}	
	where the second equality follows from Eq. (3.19) of Beghin and D'Ovidio (2014) and the last equality  follows from the Laplace transform of $\alpha$-stable and inverse $\beta$-stable subordinator (see Section $\ref{seci}$). This completes the proof.
	\end{proof}
\begin{remark}
For $k_1=k_2=\dots=k_q=1$, the Proposition \ref{prppmf} reduces to the corresponding result for multivariate space-time fractional Poisson process (see Beghin and Macci (2016), Proposition 3).
\end{remark}
In the following result, we obtain an explicit expression for the state probabilities of MGSTFCP.
\begin{theorem}\label{prppmfab}
The pmf $p^{\alpha,\beta}(\bar{n},t)$, $\bar{n}\ge\bar{0}$ is given by 
\begin{equation*}
p^{\alpha,\beta}(\bar{n},t)=\sum_{\substack{\Omega(k_i,n_i)\\i=1,2,\dots,q}}\sum_{r=0}^{\infty}\frac{(-\lambda^\alpha t^\beta)^r}{\Gamma(\beta r+1)}\frac{\Gamma(\alpha r+1)}{\Gamma(\alpha r-\sum_{i=1}^{q}\sum_{j_i=1}^{k_i}x_{ij_i}+1)}\prod_{i=1}^{q}\prod_{j_i=1}^{k_i}\frac{(-\lambda_{ij_i}/\lambda)^{x_{ij_i}}}{x_{ij_i}!},
\end{equation*} 
where $\Omega(k_i,n_i)=\{(x_{i1},x_{i2},\dots,x_{ik_i}):\sum_{j_i=1}^{k_i}j_ix_{ij_i}=n_i,\ x_{ij_i}\in\mathbb{N}_0\}$.
\end{theorem}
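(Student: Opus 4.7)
The cleanest route is to exploit the subordination structure together with the closed form for the pmf of the MGSFCP already established in Theorem \ref{thmyy}. Since $\{D_\alpha(t)\}_{t\ge0}$, $\{Y_\beta(t)\}_{t\ge 0}$ and the component GCPs $\{M_i(t)\}_{t\ge0}$ are mutually independent, the defining relation \eqref{mstfgcp} rearranges as $\bar{M}^{\alpha,\beta}(t) \stackrel{d}{=} \bar{M}^{\alpha}(Y_\beta(t))$, so conditioning on $Y_\beta(t)$ gives
\begin{equation*}
p^{\alpha,\beta}(\bar{n},t) \;=\; \int_0^\infty p^{\alpha}(\bar{n},y)\, g_\beta(t,y)\,\mathrm{d}y,
\end{equation*}
where $g_\beta(t,\cdot)$ is the density of the inverse $\beta$-stable subordinator.

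Next, I would plug in the explicit series expression for $p^{\alpha}(\bar{n},y)$ from Theorem \ref{thmyy}. The only $y$-dependence in that formula appears in the factor $(-y\lambda^\alpha)^r/r!$; all the combinatorial sums (over $\Omega(k_i,n_i)$, over $r$, and over the indices $x_{ij_i}$) are $y$-free. Assuming interchange of sum and integral is legitimate (which will be the one real technical point to check), I obtain
\begin{equation*}
p^{\alpha,\beta}(\bar{n},t) \;=\; \sum_{\substack{\Omega(k_i,n_i)\\ i=1,\dots,q}}\sum_{r=0}^{\infty}\frac{(-\lambda^\alpha)^r}{r!}\,\frac{\Gamma(\alpha r+1)}{\Gamma(\alpha r - \sum_{i,j_i}x_{ij_i}+1)}\Bigl(\prod_{i,j_i}\tfrac{(-\lambda_{ij_i}/\lambda)^{x_{ij_i}}}{x_{ij_i}!}\Bigr)\,\mathbb{E}\bigl(Y_\beta^{r}(t)\bigr).
\end{equation*}
Substituting the moment formula \eqref{beghin}, namely $\mathbb{E}(Y_\beta^r(t)) = r!\,t^{r\beta}/\Gamma(r\beta+1)$, collapses $r!$ and replaces the $(-\lambda^\alpha)^r/r!$ factor by $(-\lambda^\alpha t^\beta)^r/\Gamma(\beta r+1)$, which is exactly the claimed expression.

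The main obstacle is justifying the termwise integration, since the series defining $p^{\alpha}(\bar{n},y)$ contains signs of mixed type and its partial sums are not manifestly bounded by an integrable envelope uniformly in $y$. A clean way to dispose of this is to observe that by Theorem \ref{thmyy} the series for $p^{\alpha}(\bar{n},y)$ is obtained by matching coefficients in the (absolutely convergent) expansion of $G^{\alpha}(\bar{u},y)=\exp(-y\lambda^\alpha(1-\lambda^{-1}\sum_{i,j_i}\lambda_{ij_i}u_i^{j_i})^\alpha)$, so one can equivalently start from the pgf identity $G^{\alpha,\beta}(\bar{u},t)=\mathbb{E}(G^{\alpha}(\bar{u},Y_\beta(t)))$ in \eqref{pgfmstfgcp}, expand the outer Mittag-Leffler function, apply the generalized binomial theorem to $(1-\lambda^{-1}\sum_{i,j_i}\lambda_{ij_i}u_i^{j_i})^{\alpha r}$, then apply the multinomial theorem exactly as in the proof of Theorem \ref{thmyy}, and finally extract the coefficient of $u_1^{n_1}\cdots u_q^{n_q}$. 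That route avoids the interchange-of-limits issue entirely and reproduces the same expression.
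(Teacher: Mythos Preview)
Your proposal is correct. Your primary route---conditioning on $Y_\beta(t)$, inserting the MGSFCP pmf from Theorem~\ref{thmyy}, and then evaluating the mixture termwise via the moment formula~\eqref{beghin}---is genuinely different from the paper's argument and is arguably more economical, since it recycles Theorem~\ref{thmyy} wholesale rather than redoing the combinatorics. The paper instead works entirely at the pgf level: it starts from the closed form $G^{\alpha,\beta}(\bar u,t)=E_{\beta,1}\bigl(-t^\beta(\sum_{i,j_i}\lambda_{ij_i}(1-u_i^{j_i}))^\alpha\bigr)$ in~\eqref{pgfmstfgcp}, expands the Mittag-Leffler function, applies the generalized binomial and multinomial theorems, and extracts the coefficient of $u_1^{n_1}\cdots u_q^{n_q}$---precisely the second route you sketch. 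What your moment approach buys is brevity and a transparent link to the earlier theorem; what the paper's pgf approach buys is that the interchange-of-limits issue never arises, since all manipulations occur inside a single absolutely convergent power series in $\bar u$. Since you explicitly flag the interchange as the one technical gap in your first route and then describe the pgf route as the clean workaround, your fallback coincides with the paper's actual proof.
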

\begin{proof}
From \eqref{pgfmstfgcp}, we have 
{\small\begin{align}
G^{\alpha,\beta}(\bar{u},t)
%&=\sum_{r=0}^{\infty}\frac{(-\lambda^\alpha t^\beta)^r}{\Gamma(\beta r+1)}\Bigg(1-\frac{1}{\lambda}\sum_{i=1}^{q}\sum_{j_i=1}^{k_i}\lambda_{ij_i}u_i^{j_i}\Bigg)^{\alpha r}\nonumber\\
&=\sum_{r=0}^{\infty}\frac{(-\lambda^\alpha t^\beta)^r}{\Gamma(\beta r+1)}\sum_{x\geq0}\binom{\alpha r}{x}\bigg(-\frac{1}{\lambda}\bigg)^x\bigg(\sum_{i=1}^{q}\sum_{j_i=1}^{k_i}\lambda_{ij_i}u_i^{j_i}\bigg)^x\nonumber\\
&=\sum_{r=0}^{\infty}\frac{(-\lambda^\alpha t^\beta)^r}{\Gamma(\beta r+1)}\sum_{x\geq0}\binom{\alpha r}{x}\bigg(-\frac{1}{\lambda}\bigg)^x\sum_{\substack{\sum_{i=1}^{q}x_i=x\\x_i\in\mathbb{N}_0}}x!\prod_{i=1}^{q}\frac{(\sum_{j_i=1}^{k_i}\lambda_{ij_i}u_i^{j_i})^{x_i}}{x_i!}\nonumber\\
&=\sum_{r=0}^{\infty}\frac{(-\lambda^\alpha t^\beta)^r}{\Gamma(\beta r+1)}\sum_{x\geq0}\binom{\alpha r}{x}\bigg(-\frac{1}{\lambda}\bigg)^x\sum_{\substack{\sum_{i=1}^{q}x_i=x\\x_i\in\mathbb{N}_0}}x!\prod_{i=1}^{q}\sum_{\substack{\sum_{j_i=1}^{k_i}x_{ij_i}=x_i\\
		x_{ij_i\in\mathbb{N}_0}}}\prod_{j_i=1}^{k_i}\frac{(\lambda_{ij_i}u_i^{j_i})^{x_{ij_i}}}{x_{ij_i}!}\nonumber\\
&=\sum_{r=0}^{\infty}\frac{(-\lambda^\alpha t^\beta)^r}{\Gamma(\beta r+1)}\sum_{x\geq0}\frac{\Gamma(\alpha r+1)}{\Gamma(\alpha r-x+1)}\bigg(-\frac{1}{\lambda}\bigg)^x\sum_{\substack{\sum_{i=1}^{q}x_i=x\\x_i\in\mathbb{N}_0}}\prod_{i=1}^{q}\sum_{\substack{\sum_{j_i=1}^{k_i}x_{ij_i}=x_i\\
		x_{ij_i\in\mathbb{N}_0}}}\prod_{j_i=1}^{k_i}\frac{(\lambda_{ij_i}u_i^{j_i})^{x_{ij_i}}}{x_{ij_i}!}\nonumber\\
&=\sum_{r=0}^{\infty}\frac{(-\lambda^\alpha t^\beta)^r}{\Gamma(\beta r+1)}\sum_{\substack{x_i\geq0\\1\leq i\leq q}}\frac{\Gamma(\alpha r+1)}{\Gamma(\alpha r-\sum_{i=1}^{q}x_i+1)}\prod_{i=1}^{q}\sum_{\substack{\sum_{j_i=1}^{k_i}x_{ij_i}=x_i\\
		x_{ij_i\in\mathbb{N}_0}}}\prod_{j_i=1}^{k_i}\frac{(-\lambda_{ij_i}/\lambda)^{x_{ij_i}}}{x_{ij_i}!}\nonumber\\
&=\sum_{r=0}^{\infty}\frac{(-\lambda^\alpha t^\beta)^r}{\Gamma(\beta r+1)}\sum_{\substack{n_i\ge0\\i=1,2,\dots,q}}\sum_{\substack{\Omega(k_i,n_i)\\i=1,2,\dots,q}}\frac{\Gamma(\alpha r+1)}{\Gamma(\alpha r-\sum_{i=1}^{q}\sum_{j_i=1}^{k_i}x_{ij_i}+1)}\prod_{i=1}^{q}u_i^{n_i}\prod_{j_i=1}^{k_i}\frac{(-\lambda_{ij_i}/\lambda)^{x_{ij_i}}}{x_{ij_i}!}\nonumber\\
&=\sum_{\substack{n_i\ge0\\i=1,2,\dots,q}}\Big(\prod_{i=1}^{q}u_i^{n_i}\Big)\sum_{\substack{\Omega(k_i,n_i)\\i=1,2,\dots,q}}\sum_{r=0}^{\infty}\frac{(-\lambda^\alpha t^\beta)^r\Gamma(\alpha r+1)}{\Gamma(\beta r+1)\Gamma(\alpha r-\sum_{i=1}^{q}\sum_{j_i=1}^{k_i}x_{ij_i}+1)}\prod_{i=1}^{q}\prod_{j_i=1}^{k_i}\frac{(-\lambda_{ij_i}/\lambda)^{x_{ij_i}}}{x_{ij_i}!}.\label{NNN}
\end{align}}
Also, we have
\begin{equation}\label{MMM}
G^{\alpha,\beta}(\bar{u},t)=\sum_{\substack{n_i\ge0\\i=1,2,\dots,q}}\Big(\prod_{i=1}^{q}u_i^{n_i}\Big)p^{\alpha,\beta}(\bar{n},t),\,\, |u_i|\le1.
\end{equation}
On comparing the coefficient of $u_1^{n_1}u_2^{n_2}\dots u_q^{n_q}$ over the range $\bar{n}\ge\bar{0}$ on both sides of \eqref{NNN} and \eqref{MMM}, we obtain the required result. 
\end{proof}
\begin{remark}
For $k_1=k_2=\dots=k_q=1$, the pmf $p^{\alpha,\beta}(\bar{n},t)$ reduces to the pmf of multivariate space-time fractional Poisson process (see Beghin and Macci (2016), Eq. (15)). For $q=1$, the pmf $p^{\alpha,\beta}(\bar{n},t)$ reduces to the pmf of generalized space-time fractional counting process (see Kataria {\it et al.} (2022), Eq. (6.12)). 
\end{remark}

\begin{remark}
The codifference of $\{M_i(D_\alpha(Y_\beta(t)))\}_{t\ge0}$ and $\{M_l(D_\alpha(Y_\beta(t)))\}_{t\ge0}$, $1\le i,l\le q$ is given by
{\scriptsize\begin{align}
	\tau(M_i(D_\alpha(Y_\beta(t))),M_l(D_\alpha(Y_\beta(t))))&=\ln E_{\beta,1}\bigg(-t^\beta\bigg(\sum_{j_i=1}^{k_i}\lambda_{ij_i}(1-e^{\omega j_i})+\sum_{j_l=1}^{k_l}\lambda_{lj_l}(1-e^{-\omega j_l})\bigg)^\alpha\bigg)\mathbb{I}_{\{i\ne l\}}\nonumber\\
	&\,\,\,-\ln E_{\beta,1}\bigg(-t^\beta\bigg(\sum_{j_i=1}^{k_i}\lambda_{ij_i}(1-e^{\omega j_i})\bigg)^\alpha\bigg)-\ln E_{\beta,1}\bigg(-t^\beta\bigg(\sum_{j_l=1}^{k_l}\lambda_{lj_l}(1-e^{-\omega j_l})\bigg)^\alpha\bigg).\label{roro}
\end{align}}
Its proof follows similar lines to that of Proposition \ref{prpcod}.

On substituting $k_1=k_2=\dots=k_q=1$ in \eqref{roro}, it reduces to the codifference of component processes in multivariate space-time fractional Poisson process (see Beghin and Macci (2016), Proposition 5).
\end{remark}
\begin{remark}
		For $1\le j_i\le k_i$, $i=1,2,\dots,q$, if $\lambda_{ij_i} = \lambda_i$ and $\lambda_{ij_i} = \lambda_i(1- \nu)\nu^{j_i-1}/(1-\nu^{k_i}), 0 \le \nu< 1$ then the results obtained above for MGSTFCP
	reduces to that for the multivariate version of space-time fractional Poisson process of order $k$ and P\'olya-Aeppli process of order $k$, respectively.
\end{remark}
\subsection{MGCP time-changed by tempered stable subordinator}
Next, we consider a multivariate tempered space fractional generalized counting process $\{\bar{Z}^{\alpha,\theta}(t)\}_{t\ge0}$. It is defined as follows:
\begin{equation}\label{temprepr}
	\bar{Z}^{\alpha,\theta}(t)=(M_1(D_{\alpha,\theta}(t)), M_2(D_{\alpha,\theta}(t)),\dots,M_q(D_{\alpha,\theta}(t))), \,\,0<\alpha<1,\,  \theta>0,
\end{equation}
where $\{D_{\alpha,\theta}(t)\}_{t\ge0}$ is a tempered $\alpha$-stable subordinator independent of $\{M_i(t)\}_{t\ge0}$, $i=1,2,\dots,q$. 

The transition probabilities of $\{\bar{Z}^{\alpha,\theta}(t)\}_{t\ge0}$ are given by
{\scriptsize\begin{align}\label{temptrans}
\mathrm{Pr}\{\bar{Z}^{\alpha,\theta}(t+h)=\bar{n}+\bar{l}|\bar{Z}^{\alpha,\theta}(t)=\bar{n}\}&=\begin{cases}
-h\displaystyle\sum_{\substack{\Omega(k_i,l_i)\\i=1,2,\dots,q}}\frac{(\lambda+\theta)^\alpha\Gamma(1+\alpha)}{\Gamma(\alpha-\sum_{i=1}^{q}\sum_{j_i=1}^{k_i}l_{ij_i}+1)}\prod_{i=1}^{q}\prod_{j_i=1}^{k_i}\frac{(-\lambda_{ij_i}/(\lambda+\theta))^{l_{ij_i}}}{l_{ij_i}!}+o(h),\ \bar{l}\succ\bar{0},\vspace{.2cm}\\
1-h((\lambda+\theta)^\alpha-\theta^\alpha) +o(h),\ \bar{l}=\bar{0},
\end{cases}
\end{align}}
where $o(h)\to0$ as $h\to0$ and $\Omega(k_i,l_i)=\{(l_{i1},l_{i2},\dots,l_{ik_i}):\sum_{j_i=1}^{k_i}j_il_{ij_i}=l_i,\,\, l_{ij_i}\in\mathbb{N}_0\}$.
\begin{remark}
For $q=1$, the transition probabilities in \eqref{temptrans} reduces to that of GCP time-changed by tempered stable subordinator (see Kataria \textit{et al.} (2022), Eq. 4.16).
\end{remark}
  The pgf $G_{\bar{Z}}(\bar{u},t)=\mathbb{E}\big(\prod_{i=1}^{q}u_i^{M_i(D_{\alpha,\theta}(t))} \big)$, $\bar{u}\in[0,1]^q$ of $\{\bar{Z}^{\alpha,\theta}(t)\}_{t\ge0}$ is given by
\begin{align}
	G_{\bar{Z}}(\bar{u},t)&=\mathbb{E}\Big(\mathbb{E}\Big(\prod_{i=1}^{q}u_i^{M_i(D_{\alpha,\theta}(t))} \Big|\, D_{\alpha,\theta}(t)\Big)\Big)\nonumber\\
	&=\mathbb{E}\bigg(\exp{\bigg(-D_{\alpha,\theta}(t)\sum_{i=1}^{q}\sum_{j_i=1}^{k_i}\lambda_{ij_i}(1-u_i^{j_i})\bigg)}\bigg)\nonumber\\
	&=\exp\bigg(-t\bigg(\bigg(\sum_{i=1}^{q}\sum_{j_i=1}^{k_i}\lambda_{ij_i}(1-u_i^{j_i})+\theta\bigg)^\alpha-\theta^\alpha\bigg)\bigg),\label{mnmn}
\end{align}
where we have used \eqref{temppp} in the last step to obtain the pgf given in \eqref{mnmn}. It solves the following differential equation:
\begin{equation}\label{pgftempde}
	\frac{\partial}{\partial t}G_{\bar{Z}}(\bar{u},t)=-\bigg(\bigg(\sum_{i=1}^{q}\sum_{j_i=1}^{k_i}\lambda_{ij_i}(1-u_i^{j_i})+\theta\bigg)^\alpha-\theta^\alpha\bigg)G_{\bar{Z}}(\bar{u},t), \,\, G_{\bar{Z}}(\bar{u},0)=1.
\end{equation}
\begin{remark}
	On substituting $q=2$ and $k_i=1$ in \eqref{mnmn}, we obtain the pgf of bivariate tempered space-factional Poisson process	(see Soni \textit{et al.} (2024)).
\end{remark}
In the following result, we give the system of governing differential equations for the state probabilities of $\{\bar{Z}^{\alpha,\theta}(t)\}_{t\ge0}$.
\begin{proposition}\label{prpalphatheta}
	The state probabilities $p_{\bar{Z}}(\bar{n},t)=\mathrm{Pr}\{\bar{Z}^{\alpha,\theta}(t)=\bar{n}\}$, $\bar{n}\ge\bar{0}$ solve
{\small	\begin{align*}
		\frac{\mathrm{d}}{\mathrm{d}t}p_{\bar{Z}}(\bar{n},t)&=\theta^\alpha p_{\bar{Z}}(\bar{n},t)-(\lambda+\theta)^\alpha\Gamma(\alpha+1) \sum_{\substack{m_i\ge0\\i=1,2,\dots,q}}\sum_{\substack{\Omega(k_i,m_i)\\ i=1,2,\dots, q}}\frac{p_{\bar{Z}}(\bar{n}-\bar{m},t)}{\Gamma(\alpha+1-\sum_{i=1}^{q}\sum_{j_i=1}^{k_i}x_{ij_i})}\\
		&\hspace*{7cm}\cdot\prod_{i=1}^{q}\prod_{j_i=1}^{k_i}\frac{(-\lambda_{ij_i}/(\lambda+\theta))^{x_{ij_i}}}{x_{ij_i}!}
		%		&=\theta^\alpha p^{\alpha,\theta}(\bar{n},t)-\sum_{\substack{m_i\geq 0\\ i=1,2,\dots,q}}\sum_{\substack{\Omega(k_i,m_i)\\i=1,2,\dots,q}}\frac{\lambda^\alpha\Gamma(\alpha +1)p^{\alpha,\theta}(\bar{n}-\bar{m},t)}{\Gamma(\alpha -\sum_{i-1}^{q}\sum_{j_i=1}^{k_i}x_{ij_i}+1)}\\
%		&\hspace{5cm}\cdot\Big(1+\frac{\theta}{\lambda}\Big)^{\alpha-\sum_{i-1}^{q}\sum_{j_i=1}^{k_i}x_{ij_i}}\prod_{i=1}^{q}\prod_{j_i=1}^{k_i}\frac{\big(-\lambda_{ij_i}/\lambda\big)^{x_{ij_i}}}{x_{ij_i}!}
	\end{align*}}
	with $p_{\bar{Z}}(\bar{n},0)=\mathbb{I}_{\{\bar{n}=\bar{0}\}}$. Here,  $\Omega(k_i,m_i)=\{(x_{i1},x_{i2},\dots,x_{ik_i}):\sum_{j_i=1}^{k_i}j_ix_{ij_i}=m_i,\,\, x_{ij_i}\in\mathbb{N}_0\}$.
\end{proposition}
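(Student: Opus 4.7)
The plan is to derive the governing differential equation by a Chapman--Kolmogorov argument, in direct parallel with the proof of Proposition \ref{prppp}. Since both the MGCP and the tempered $\alpha$-stable subordinator are L\'evy processes, the composition $\{\bar{Z}^{\alpha,\theta}(t)\}_{t\ge0}$ has independent and stationary increments, so one can condition on the state at time $t$ and read off the infinitesimal behaviour from the transition probabilities \eqref{temptrans}.

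The main steps are as follows. First, for $\bar{n}\ge\bar{0}$ and small $h>0$, I would write
\begin{align*}
p_{\bar{Z}}(\bar{n},t+h) &= p_{\bar{Z}}(\bar{n},t)\,\mathrm{Pr}\{\bar{Z}^{\alpha,\theta}(t+h)=\bar{n}\mid \bar{Z}^{\alpha,\theta}(t)=\bar{n}\}\\
&\quad +\sum_{\bar{l}\succ\bar{0}}p_{\bar{Z}}(\bar{n}-\bar{l},t)\,\mathrm{Pr}\{\bar{Z}^{\alpha,\theta}(t+h)=\bar{n}\mid \bar{Z}^{\alpha,\theta}(t)=\bar{n}-\bar{l}\}+o(h).
\end{align*}
Substituting \eqref{temptrans}, dividing by $h$, and letting $h\to0$ yields
\begin{equation*}
\frac{\mathrm{d}}{\mathrm{d}t}p_{\bar{Z}}(\bar{n},t)=-\bigl((\lambda+\theta)^\alpha-\theta^\alpha\bigr)p_{\bar{Z}}(\bar{n},t)-(\lambda+\theta)^\alpha\Gamma(\alpha+1)\sum_{\bar{m}\succ\bar{0}}\sum_{\Omega(k_i,m_i)}\frac{p_{\bar{Z}}(\bar{n}-\bar{m},t)}{\Gamma(\alpha+1-\sum_{i,j_i}x_{ij_i})}\prod_{i=1}^{q}\prod_{j_i=1}^{k_i}\frac{(-\lambda_{ij_i}/(\lambda+\theta))^{x_{ij_i}}}{x_{ij_i}!}.
\end{equation*}

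The final step is to recast this expression in the form asserted in the proposition. I would note that when $\bar{m}=\bar{0}$, the set $\Omega(k_i,0)$ contains only the all-zero vector, so the corresponding slice of the double sum in the statement reduces to $(\lambda+\theta)^\alpha\Gamma(\alpha+1)\cdot \Gamma(\alpha+1)^{-1}\cdot p_{\bar{Z}}(\bar{n},t)=(\lambda+\theta)^\alpha p_{\bar{Z}}(\bar{n},t)$. Combining this with the prefactor $\theta^\alpha p_{\bar{Z}}(\bar{n},t)$ in the statement reproduces exactly the coefficient $-\bigl((\lambda+\theta)^\alpha-\theta^\alpha\bigr)p_{\bar{Z}}(\bar{n},t)$ derived from the Chapman--Kolmogorov step, while the $\bar{m}\succ\bar{0}$ terms match the jump contribution displayed above term-by-term.

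The only genuine subtlety is this bookkeeping identification of the diagonal coefficient; the rest of the argument is a routine adaptation of the proof of Proposition \ref{prppp}, and the initial condition $p_{\bar{Z}}(\bar{n},0)=\mathbb{I}_{\{\bar{n}=\bar{0}\}}$ is immediate from $D_{\alpha,\theta}(0)=0$. As a consistency check one could bypass \eqref{temptrans} and argue directly from \eqref{pgftempde}: expanding $\bigl((\lambda+\theta)-\sum_i\sum_{j_i}\lambda_{ij_i}u_i^{j_i}\bigr)^\alpha$ via the generalized binomial theorem followed by the multinomial theorem, and then comparing the coefficients of $\prod_i u_i^{n_i}$ on both sides of the resulting PDE, reproduces the same governing equation along lines already employed in the proofs of Theorems \ref{thmyy} and \ref{prppmfab}.
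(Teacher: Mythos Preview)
Your proposal is correct and follows essentially the same approach as the paper: the paper's main proof uses the transition probabilities \eqref{temptrans} in a Chapman--Kolmogorov argument just as in Proposition \ref{prppp}, and the alternate proof in the appendix proceeds exactly via the pgf equation \eqref{pgftempde} with the binomial/multinomial expansion you describe as a consistency check. Your bookkeeping identification of the $\bar{m}=\bar{0}$ term with the diagonal coefficient is the one point the paper leaves implicit, and you have handled it correctly.
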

\begin{proof}
On using \eqref{temptrans}, the proof follows similar lines to that of Proposition \ref{prppp}. Hence, it is omitted.
An alternate proof of this result is given in Appendix A2.
\end{proof}
\begin{theorem}\label{thmalphatheta}
	The state probabilities of $\{\bar{Z}^{\alpha,\theta}(t)\}_{t\ge0}$ are given by
	{\small	\begin{align*}
			p_{\bar{Z}}(\bar{n},t)&=e^{t\theta^\alpha}\sum_{\substack{\Omega(k_i,n_i)\\ i=1,2,\dots,q}}\sum_{r=0}^{\infty}\frac{(- t(\lambda+\theta)^{\alpha })^r}{r!}\frac{\Gamma(\alpha r+1)}{\Gamma(\alpha r+1-\sum_{i=1}^{q}\sum_{J_i=1}^{k_i}x_{ij_i})}\prod_{i=1}^{q}\prod_{j_i=1}^{k_i}\frac{(-\lambda_{ij_i}/(\lambda+\theta))^{x_{ij_i}}}{x_{ij_i}!},\, \, \bar{n}\ge\bar{0},
	\end{align*}}
	where  $\lambda=\sum_{i=1}^{q}\sum_{j_i=1}^{k_i}\lambda_{ij_i}$ and $\Omega(k_i,n_i)=\{(x_{i1},x_{i2},\dots,x_{ik_i}):\sum_{j_i=1}^{k_i}j_ix_{ij_i}=n_i,\,x_{ij_i}\in\mathbb{N}_0 \}$.
\end{theorem}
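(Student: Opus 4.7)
The plan is to extract the pmf directly from the pgf in \eqref{mnmn}, mirroring the method used to establish Theorem \ref{thmyy}. First I would factor the exponential as
\begin{equation*}
G_{\bar{Z}}(\bar{u},t)=e^{t\theta^\alpha}\exp\Bigl(-t(\lambda+\theta)^\alpha\Bigl(1-\tfrac{1}{\lambda+\theta}\sum_{i=1}^{q}\sum_{j_i=1}^{k_i}\lambda_{ij_i}u_i^{j_i}\Bigr)^\alpha\Bigr),
\end{equation*}
using the identity $\sum_{i,j_i}\lambda_{ij_i}(1-u_i^{j_i})+\theta=(\lambda+\theta)-\sum_{i,j_i}\lambda_{ij_i}u_i^{j_i}$. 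This is exactly the structure appearing in the MGSFCP proof with $\lambda^\alpha$ replaced by $(\lambda+\theta)^\alpha$ (and an extra overall factor $e^{t\theta^\alpha}$), so the remainder of the argument parallels that of Theorem \ref{thmyy}.

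Next I would expand the outer exponential as a Taylor series in $r$, giving factors $(-t(\lambda+\theta)^\alpha)^r/r!$, and then apply the generalized binomial theorem to $(1-\cdot)^{\alpha r}$ to produce the ratio $\Gamma(\alpha r+1)/\Gamma(\alpha r-x+1)$ with $x=\sum_i x_i$. The multinomial theorem decomposes $(\sum_{i}\sum_{j_i}\lambda_{ij_i}u_i^{j_i}/(\lambda+\theta))^x$ into a product over $i$ of single-variable power sums, each of which is expanded by a further multinomial step into products over $j_i$ of factors $\lambda_{ij_i}^{x_{ij_i}}u_i^{j_ix_{ij_i}}/(\lambda+\theta)^{x_{ij_i}} x_{ij_i}!^{-1}$, with the constraint $\sum_{j_i}x_{ij_i}=x_i$.

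Then I would swap the order of summation so that the outer sums run over $(n_1,\ldots,n_q)\ge\bar{0}$ with the inner combinatorial constraint $\sum_{j_i}j_ix_{ij_i}=n_i$ (this is precisely $\Omega(k_i,n_i)$), extracting $\prod_i u_i^{n_i}$. Comparing the resulting expression with the defining expansion $G_{\bar{Z}}(\bar{u},t)=\sum_{\bar{n}\ge\bar{0}}\bigl(\prod_i u_i^{n_i}\bigr)p_{\bar{Z}}(\bar{n},t)$ yields the claimed formula.

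The main bookkeeping obstacle is keeping track of the nested multi-index sums when interchanging orders of summation; however, this is routine once one recognizes that every step is formally identical to the computation in \eqref{**}, with $\lambda\mapsto\lambda+\theta$ in the binomial expansion and an additional multiplicative constant $e^{t\theta^\alpha}$ retained throughout. Convergence of the rearranged series is not an issue since the expansion is valid pointwise for $|u_i|\le 1$, where $G_{\bar{Z}}(\bar{u},t)$ is analytic in each $u_i$.
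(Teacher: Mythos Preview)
Your proposal is correct and matches the paper's own proof (Appendix A3) essentially line for line: both factor out $e^{t\theta^\alpha}$, Taylor-expand the remaining exponential, apply the generalized binomial theorem to $(1-\tfrac{1}{\lambda+\theta}\sum\lambda_{ij_i}u_i^{j_i})^{\alpha r}$, unfold via multinomial expansions, and read off coefficients of $\prod_i u_i^{n_i}$. Your observation that the computation is literally \eqref{**} with $\lambda\mapsto\lambda+\theta$ and an extra factor $e^{t\theta^\alpha}$ is exactly right and in fact slightly more economical than the paper's write-up, which repeats the full calculation.
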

\begin{proof}
See Appendix A3.
\end{proof}	
\begin{corollary}
The pmf of $\{\bar{Z}^{\alpha,\theta}(t)\}_{t\ge0}$ has the following equivalent representation:
{\small\begin{equation*}
p_{\bar{Z}}(\bar{n},t)=\sum_{\substack{\Omega(k_i,n_i)\\i=1,2,\dots,q}}
			{}_1\Psi_1\left[\begin{matrix}
				(1,\alpha)_{1,1}\\
				\Big(1-\sum_{i=1}^{q}\sum_{j_i=1}^{k_i}x_{ij_i}, \alpha\Big)_{1,1}
			\end{matrix}\Bigg| -(\lambda+\theta)^\alpha t\right]\prod_{i=1}^{q}\prod_{j_i=1}^{k_i}\frac{(-\lambda_{ij_i}/(\lambda+\theta))^{x_{ij_i}}}{x_{ij_i}!},\, \bar{n}\ge\bar{0},
\end{equation*}}
where ${}_1\Psi_1(\cdot)$ is the generalized Wright function whose definition is given in \eqref{wrightfn}.
	\end{corollary}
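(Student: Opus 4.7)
The plan is to deduce this representation directly from the explicit series formula for $p_{\bar{Z}}(\bar{n},t)$ established in Theorem~\ref{thmalphatheta}, by recognizing the inner $r$-series as the generalized Wright function $_1\Psi_1$. No new analytic input is needed; the content of the corollary is a pure parameter identification against the definition \eqref{wrightfn}, entirely analogous to how the Wright-function form \eqref{alphawright} for the MGSFCP pmf was read off from Theorem~\ref{thmyy}.

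Concretely, I would first write $\Lambda=\lambda+\theta$ and $S=\sum_{i=1}^q\sum_{j_i=1}^{k_i}x_{ij_i}$, and pull the product over $(i,j_i)$ outside the $r$-summation in Theorem~\ref{thmalphatheta}. The remaining inner series is
\[
\sum_{r=0}^{\infty}\frac{\Gamma(\alpha r+1)}{\Gamma(\alpha r+1-S)}\,\frac{(-\Lambda^{\alpha}t)^{r}}{r!}.
\]
Setting $p=q=1$, $a_{1}=1$, $\alpha_{1}=\alpha$, $b_{1}=1-S$, $\beta_{1}=\alpha$, and argument $x=-\Lambda^{\alpha}t$ in the definition \eqref{wrightfn} of $_p\Psi_q$ makes this series exactly
\[
{}_{1}\Psi_{1}\!\left[\begin{matrix}(1,\alpha)_{1,1}\\(1-S,\alpha)_{1,1}\end{matrix}\,\bigg|\,-\Lambda^{\alpha}t\right].
\]
Reinserting the product $\prod_{i,j_{i}}(-\lambda_{ij_{i}}/\Lambda)^{x_{ij_{i}}}/x_{ij_{i}}!$ and the outer sum over $\Omega(k_{i},n_{i})$ then yields the claimed formula.

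There is no genuine obstacle; the whole argument is mechanical bookkeeping. The only point requiring a little care is the tempering prefactor $e^{t\theta^{\alpha}}$ that appears in Theorem~\ref{thmalphatheta}: in my write-up I would either carry it explicitly through the identification (so that the corollary's right-hand side is multiplied by $e^{t\theta^{\alpha}}$) or, alternatively, absorb it by noting the pgf identity $G_{\bar{Z}}(\bar{u},t)=e^{t\theta^\alpha}\exp(-t(\Lambda-\sum\lambda_{ij_i}u_i^{j_i})^\alpha)$ and performing the series expansion on the second factor only, which matches the MGSFCP template one-for-one with $\lambda$ replaced by $\Lambda$. Either route produces the Wright-function form stated.
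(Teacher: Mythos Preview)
Your approach is correct and is exactly what the paper intends: the corollary is stated without proof as an immediate consequence of Theorem~\ref{thmalphatheta}, obtained by recognizing the inner $r$-series $\sum_{r\ge0}\frac{\Gamma(\alpha r+1)}{\Gamma(\alpha r+1-S)}\frac{(-(\lambda+\theta)^{\alpha}t)^{r}}{r!}$ as ${}_{1}\Psi_{1}$ via definition~\eqref{wrightfn}. Your observation about the tempering prefactor $e^{t\theta^{\alpha}}$ is also well taken: Theorem~\ref{thmalphatheta} carries it explicitly, while the corollary as printed drops it, so the two formulas match only up to that factor---your proposed fix (carrying it through, or equivalently noting that the $\theta=0$ specialization reproduces~\eqref{alphawright}) is the right way to reconcile them.
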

%	 The proof follows along the similar lines to that of Proposition \ref{prppp}. Hence, it is omitted.

Let $q(\cdot,t)$ be the density of $\{D_{\alpha,\theta}(t)\}_{t\ge0}$ and $\delta_0$ denotes the Dirac delta function. The following relation will be used to obtain the next result (see Beghin (2015), Eq. (15)): 
\begin{equation}\label{llml}
	\frac{\partial}{\partial x}q(x,t)=-\theta q(x,t)+\bigg(\theta^\alpha-\frac{\partial}{\partial t}\bigg)^{1/\alpha}q(x,t)
\end{equation}
with $q(x,0)=0$ and $q(0,t)=\delta_0(x)$.

\begin{proposition}
The state probabilities of $\{\bar{Z}^{\alpha,\theta}(t)\}_{t\ge0}$ satisfy
	\begin{equation*}
		\bigg(\theta^\alpha-\frac{\partial}{\partial t}\bigg)^{1/\alpha}p_{\bar{Z}}(\bar{n},t)=(\theta+\lambda)p_{\bar{Z}}(\bar{n},t)-\sum_{i=1}^{q}\sum_{j_i=1}^{k_i}\lambda_{ij_i}p_{\bar{Z}}(\bar{n}-\bar{\epsilon}^{j_i}_i,t),\,\,\,\bar{n}\ge\bar{0}.
	\end{equation*}
\end{proposition}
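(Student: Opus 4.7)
The plan is to exploit the subordination representation \eqref{temprepr} and the operator identity \eqref{llml} satisfied by the density $q(\cdot,t)$ of the tempered stable subordinator. By the independence assumption in \eqref{temprepr}, we can write
\begin{equation*}
p_{\bar{Z}}(\bar{n},t)=\int_{0}^{\infty}p(\bar{n},x)\,q(x,t)\,\mathrm{d}x,
\end{equation*}
where $p(\bar{n},x)$ is the pmf of the MGCP. Applying the operator $\bigl(\theta^\alpha-\partial/\partial t\bigr)^{1/\alpha}$ under the integral (passing the operator in $t$ through the $x$-integral) and using \eqref{llml} on $q(x,t)$, the right-hand side becomes
\begin{equation*}
\int_{0}^{\infty}p(\bar{n},x)\Bigl(\tfrac{\partial}{\partial x}q(x,t)+\theta\, q(x,t)\Bigr)\mathrm{d}x=\int_{0}^{\infty}p(\bar{n},x)\,\tfrac{\partial}{\partial x}q(x,t)\,\mathrm{d}x+\theta\, p_{\bar{Z}}(\bar{n},t).
\end{equation*}

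Next, I would integrate by parts in the variable $x$ on the first integral. The boundary term at $x=\infty$ vanishes since $q(x,t)$ decays rapidly in $x$, and the boundary term at $x=0^{+}$ vanishes as well because for any $t>0$ the tempered stable density satisfies $q(x,t)\to 0$ as $x\to 0^{+}$ (the singularity $\delta_{0}(x)$ in \eqref{llml} concerns the initial condition $t=0$, not $x=0$ for fixed $t>0$). Therefore
\begin{equation*}
\int_{0}^{\infty}p(\bar{n},x)\,\tfrac{\partial}{\partial x}q(x,t)\,\mathrm{d}x=-\int_{0}^{\infty}\tfrac{\partial}{\partial x}p(\bar{n},x)\,q(x,t)\,\mathrm{d}x.
\end{equation*}
Now I would invoke the governing equation \eqref{propmgcp} for the MGCP, which gives $\partial_{x}p(\bar{n},x)=-\lambda p(\bar{n},x)+\sum_{i=1}^{q}\sum_{j_i=1}^{k_i}\lambda_{ij_i}p(\bar{n}-\bar{\epsilon}_i^{j_i},x)$. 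Substituting this expression and reassembling the integrals as $p_{\bar{Z}}(\bar{n},t)$ and $p_{\bar{Z}}(\bar{n}-\bar{\epsilon}_i^{j_i},t)$ produces $\lambda\, p_{\bar{Z}}(\bar{n},t)-\sum_{i,j_i}\lambda_{ij_i}p_{\bar{Z}}(\bar{n}-\bar{\epsilon}_i^{j_i},t)$, and adding the leftover $\theta\, p_{\bar{Z}}(\bar{n},t)$ yields the claimed identity.

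The main obstacle here is purely formal rather than conceptual: one must justify passing the fractional operator $\bigl(\theta^\alpha-\partial_t\bigr)^{1/\alpha}$ through the integral in $x$ and legitimize the integration by parts by checking the boundary behaviour of $q(x,t)$ and the growth of $p(\bar{n},x)$. Since $p(\bar{n},x)$ is a probability (hence bounded by $1$) and $q(x,t)$ together with its $x$-derivative decay sufficiently fast at infinity, and since the operator can be understood via its Laplace symbol $(\theta^\alpha-(-s))^{1/\alpha}$ acting on $t$-Laplace transforms, these interchanges are valid; the rest of the argument is a direct computation using the MGCP Kolmogorov equation.
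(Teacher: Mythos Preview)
Your proposal is correct and follows essentially the same route as the paper: write $p_{\bar{Z}}(\bar{n},t)$ as $\int_{0}^{\infty}p(\bar{n},x)\,q(x,t)\,\mathrm{d}x$, apply $(\theta^\alpha-\partial_t)^{1/\alpha}$ under the integral, use \eqref{llml} to trade it for $\theta q(x,t)+\partial_x q(x,t)$, integrate by parts (boundary terms vanish since $\lim_{x\to0}q(x,t)=\lim_{x\to\infty}q(x,t)=0$), and substitute the MGCP Kolmogorov equation \eqref{propmgcp}. The paper's proof is identical in structure; your additional remarks on justifying the operator interchange and the boundary behaviour are reasonable but not elaborated in the original.
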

\begin{proof}
	From \eqref{temprepr}, we have
	\begin{equation}\label{abnb}
		p_{\bar{Z}}(\bar{n},t)=\int_{0}^{\infty}p(\bar{n},x)q(x,t)\,\mathrm{d}x.
	\end{equation}
	On using \eqref{llml} and the fact that $\lim_{x\to0}q(x,t)=\lim_{x\to\infty}q(x,t)=0$,  \eqref{abnb} becomes
	\begin{align*}
		\bigg(\theta^\alpha-\frac{\partial}{\partial t}\bigg)^{1/\alpha}p_{\bar{Z}}(\bar{n},t)&=\int_{0}^{\infty}p(\bar{n},x)\bigg(\theta q(x,t)+\frac{\partial}{\partial x}q(x,t)\bigg)\mathrm{d}x\\
		&=\theta p_{\bar{Z}}(\bar{n},t)-\int_{0}^{\infty}q(x,t)\frac{\partial}{\partial x}p(\bar{n},x)\, \mathrm{d}x\\
		&=\theta p_{\bar{Z}}(\bar{n},t)-\int_{0}^{\infty}q(x,t)\Big(-\lambda p(\bar{n},x)+\sum_{i=1}^{q}\sum_{j_i=1}^{k_i}\lambda_{ij_i}p(\bar{n}-\bar{\epsilon}^{j_i}_i,x)\Big)\,\mathrm{d}x\\
		&=(\theta+\lambda)p_{\bar{Z}}(\bar{n},t)-\sum_{i=1}^{q}\sum_{j_i=1}^{k_i}\lambda_{ij_i}\int_{0}^{\infty}q(x,t)p(\bar{n}-\bar{\epsilon}^{j_i}_i,x)\,\mathrm{d}x,
	\end{align*}
	which gives the required result.
\end{proof}
 The proof of following result is similar to that of Proposition $\ref{levymeasalpha}$. Hence, it is omitted. Here, we need to work with the L\'evy measure of tempered stable subordinator, that is, $\mu_{D_{\alpha,\theta}}(\mathrm{d}s)=\alpha s^{-\alpha-1}e^{-\theta s}$ $\mathrm{d}s/\Gamma(1-\alpha)$, $\theta>0$, $0<\alpha<1$.
\begin{proposition}\label{prpppo}
The L\'evy measure of  $\{\bar{Z}^{\alpha,\theta}(t)\}_{t\ge0}$ is given by
{\small\begin{align*}
\Pi^{\bar{Z}}(A_1\times A_2\times\dots\times A_q)
&=\frac{\alpha(\lambda+\theta)^\alpha }{\Gamma(1-\alpha)}\sum_{\bar{n}\succ\bar{0}}\sum_{\substack{\Omega(k_i,n_i)\\i=1,2,\dots,q}}\Gamma\Big(\sum_{i=1}^{q}\sum_{j_i=1}^{k_i}n_{ij_i}-\alpha\Big)\prod_{i=1}^{q}\prod_{j_i=1}^{k_i}\frac{(\lambda_{ij_i}/(\lambda+\theta))^{n_{ij_i}}}{n_{ij_i}!}\mathbb{I}_{\{n_i\in A_i\}},
\end{align*}}
where $\Omega(k_i,n_i)=\{(n_{i1},n_{i2},\dots,n_{ik_i}):\sum_{j_i=1}^{k_i}j_in_{ij_i}=n_i,\,n_{ij_i}\in\mathbb{N}_0\}$ and $\lambda=\sum_{i=1}^{q}\sum_{x_l=1}^{k_i}\lambda_{ij_i}$.
\end{proposition}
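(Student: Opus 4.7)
The plan is to imitate the proof of Proposition \ref{levymeasalpha} verbatim, swapping the Lévy measure of the stable subordinator for the Lévy measure of the tempered stable subordinator. Since $\{\bar{Z}^{\alpha,\theta}(t)\}_{t\ge 0}$ is the MGCP subordinated by $\{D_{\alpha,\theta}(t)\}_{t\ge 0}$, which is itself a Lévy process (as a subordination of two Lévy processes by independence and stationarity of increments), the process is Lévy and Sato's formula (Eq.~(30.8) of Sato (1999)) applies: the Lévy measure is obtained by integrating the state probabilities of the subordinated process against the Lévy measure of the subordinator.

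Concretely, I would write
\begin{equation*}
\Pi^{\bar Z}(A_1\times\dots\times A_q)=\int_0^\infty \sum_{\bar n\succ\bar 0} p(\bar n,s)\,\mathbb I_{\{n_1\in A_1,\dots,n_q\in A_q\}}\,\mu_{D_{\alpha,\theta}}(\mathrm{d}s),
\end{equation*}
then substitute the MGCP pmf from \eqref{jopmf} and $\mu_{D_{\alpha,\theta}}(\mathrm{d}s)=\alpha s^{-\alpha-1}e^{-\theta s}\,\mathrm{d}s/\Gamma(1-\alpha)$. The inner product $\prod_{i,j_i}(\lambda_{ij_i}s)^{n_{ij_i}}/n_{ij_i}!\,\cdot e^{-\lambda_{ij_i} s}$ collapses to $e^{-\lambda s}s^{\sum_{i,j_i}n_{ij_i}}\prod_{i,j_i}\lambda_{ij_i}^{n_{ij_i}}/n_{ij_i}!$ with $\lambda=\sum_{i,j_i}\lambda_{ij_i}$. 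Combining with the $e^{-\theta s}$ factor from $\mu_{D_{\alpha,\theta}}$ produces $e^{-(\lambda+\theta)s}$, reducing the $s$-integral to a single gamma evaluation
\begin{equation*}
\int_0^\infty e^{-(\lambda+\theta)s}\,s^{\sum_{i,j_i}n_{ij_i}-\alpha-1}\,\mathrm{d}s=\frac{\Gamma\bigl(\sum_{i=1}^q\sum_{j_i=1}^{k_i}n_{ij_i}-\alpha\bigr)}{(\lambda+\theta)^{\sum_{i,j_i}n_{ij_i}-\alpha}}.
\end{equation*}

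Pulling the factor $(\lambda+\theta)^\alpha$ out front and absorbing the remaining $(\lambda+\theta)^{-\sum n_{ij_i}}$ into the product by rewriting $\lambda_{ij_i}^{n_{ij_i}}/(\lambda+\theta)^{n_{ij_i}}=(\lambda_{ij_i}/(\lambda+\theta))^{n_{ij_i}}$ yields exactly the stated formula. No new integrability issue arises because $\bar n\succ\bar 0$ forces $\sum_{i,j_i}n_{ij_i}\ge 1$, so $\Gamma(\sum_{i,j_i}n_{ij_i}-\alpha)$ is finite for every admissible term, and the tempering factor $e^{-\theta s}$ only improves convergence compared with the pure-stable case.

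The main (minor) obstacle is bookkeeping: one has to keep track of the double indexing through $\Omega(k_i,n_i)$, interchange the sum over $\bar n\succ\bar 0$ with the integral (justified by Tonelli since all integrands are non-negative), and verify that the $\mathbb I_{\{n_i\in A_i\}}$ factors survive the reduction unchanged. Once this accounting is done, the result is a one-line modification of Proposition \ref{levymeasalpha} with $\lambda$ replaced by $\lambda+\theta$ in the appropriate places, which is precisely why the authors omit the detailed proof.
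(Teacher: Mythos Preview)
Your proposal is correct and matches the paper's own approach exactly: the paper omits the proof, stating only that it follows the lines of Proposition~\ref{levymeasalpha} with the L\'evy measure of the tempered stable subordinator $\mu_{D_{\alpha,\theta}}(\mathrm{d}s)=\alpha s^{-\alpha-1}e^{-\theta s}\,\mathrm{d}s/\Gamma(1-\alpha)$ in place of that of the stable subordinator. Your computation of the gamma integral and the subsequent regrouping into the form $(\lambda_{ij_i}/(\lambda+\theta))^{n_{ij_i}}$ is precisely the intended argument.
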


\begin{remark}
For any $i=1,2\dots,q$ and $l=1,2\dots,q$, the covariance and the codifference of $\{M_i(D_{\alpha,\theta}(t))\}_{t\ge0}$ and $\{M_l(D_{\alpha,\theta}(t))\}_{t\ge0}$ are given by 
\begin{equation*}
	\operatorname{Cov}(M_i(D_{\alpha,\theta}(t)),M_l(D_{\alpha,\theta}(t))=\sum_{j_i=1}^{k_i}j_i^2\lambda_{ij_i}^2\alpha(1-\alpha)t\theta^{\alpha-2}\mathbb{I}_{\{i=l\}}+\sum_{j_i=1}^{k_i}\sum_{j_l=1}^{k_l}j_i\lambda_{ij_i}j_l\lambda_{lj_l}\alpha t\theta^{\alpha-2}(\alpha t+1-\alpha)
\end{equation*}
and
\begin{align*}
	\tau(M_i(D_{\alpha,\theta}(t)),M_l(D_{\alpha,\theta}(t))&=-t\bigg(\bigg(\sum_{j_i=1}^{k_i}\lambda_{ij_i}(1-e^{\omega j_i})+\sum_{j_l=1}^{k_l}\lambda_{lj_l}(1-e^{-\omega j_l})+\theta\bigg)^\alpha-\theta^\alpha\bigg)\mathbb{I}_{\{i\ne l\}}\\
	&\,\,\,+t\bigg(\bigg(\sum_{j_i=1}^{k_i}\lambda_{ij_i}(1-e^{\omega j_i})+\theta\bigg)^\alpha+\bigg(\sum_{j_l=1}^{k_l}\lambda_{lj_l}(1-e^{-\omega j_l})+\theta\bigg)^\alpha-2\theta^\alpha\bigg),
\end{align*}
 respectively, where $\omega=\sqrt{-1}$.
\end{remark}
\subsection{MGCP time-changed by gamma subordinator}
Here, we consider MGCP time-changed with an independent gamma subordinator $\{G_{a,b}(t)\}_{t\ge0}$ with parameters $a>0,b>0$. We denote it by $\{\bar{\mathcal{W}}^{a,b}(t)\}_{t\ge0}$. It is defined as follows:
\begin{equation}\label{Mgammadef}
	\bar{\mathcal{W}}^{a,b}(t)\coloneqq(M_1(G_{a,b}(t)), M_2(G_{a,b}(t)),\dots,M_q(G_{a,b}(t))), \,\,t\ge0,
\end{equation}

Note that the component processes $\{M_i(G_{a,b}(t))\}_{t\ge0}$, $i=1,2,\dots,q$ are conditionally independent given $\{G_{a,b}(t)\}_{t\ge0}$. 

In a small time interval of length $h$ such that $o(h)/h\to0$ as $h\to0$, the transition probabilities of $\{\bar{\mathcal{W}}^{a,b}(t)\}_{t\ge0}$ are given by
{\footnotesize\begin{equation}\label{transgamma}
	\mathrm{Pr}\{\bar{\mathcal{W}}^{a,b}(t+h)=\bar{n}+\bar{l}|\bar{\mathcal{W}}^{a,b}(t)=\bar{n}\}=\begin{cases}
		hb \displaystyle\sum_{\substack{\Omega(k_i,l_i)\\i=1,2,\dots,q}}\Big(\sum_{i=1}^{q}\sum_{j_i=1}^{k_i}l_{ij_i}-1\Big)!\prod_{i=1}^{q}\prod_{j_i=1}^{k_i}\frac{(\lambda_{ij_i}/(\lambda+a))^{l_{ij_i}}}{l_{ij_i}!}+o(h),\ \bar{l}\succ\bar{0},\vspace{.2cm}\\
		1-hb\log(1+\lambda/a) +o(h),\ \bar{l}=\bar{0},
		\end{cases}
\end{equation}}
where   $\Omega(k_i,l_i)=\{(l_{i1},l_{i2},\dots,l_{ik_i}):\sum_{j_i=1}^{k_i}j_il_{ij_i}=l_i,\,\, l_{ij_i}\in\mathbb{N}_0\}$ and $\lambda=\sum_{i=1}^{q}\sum_{j_i=1}^{k_i}\lambda_{ij_i}$.
\begin{remark}
On substituting $q=1$ in \eqref{transgamma}, we get the transition probabilities given in Eq. (4.11), Kataria and Khandakar (2022b).
\end{remark}
The pgf of $\{\bar{\mathcal{W}}^{a,b}(t)\}_{t\ge0}$ can be obtained as follows:
\begin{align*}
G_{\bar{\mathcal{W}}}(\bar{u},t)&=\mathbb{E}\Big(\mathbb{E}\Big(\prod_{i=1}^{q}u_i^{M_i(G_{a,b}(t))}\big|G_{a,b}(t)\Big)\Big)\\
&=\mathbb{E}\bigg(\exp\bigg(-G_{a,b}(t)\sum_{i=1}^{q}\sum_{j_i=1}^{k_i}\lambda_{ij_i}(1-u_i^{j_i})\bigg)\bigg),\,\,\,\text{(using \eqref{pgffmgcp})}\\
&=\bigg(1+\frac{1}{a}\sum_{i=1}^{q}\sum_{j_i=1}^{k_i}\lambda_{ij_i}(1-u_i^{j_i})\bigg)^{-bt},
\end{align*}
where the last step follows on using \eqref{gammaberfn}.

Next, on using \eqref{erkq} the proof of the following result is similar to that of Proposition 5.2 of Kataria \textit{et al.} (2022). Hence, it is omitted.
\begin{proposition}
The pmf $p_{\bar{\mathcal{W}}}(\bar{n},t)=\mathrm{Pr}\{\bar{\mathcal{W}}(t)=\bar{\bar{n}}\}$, $\bar{n}\ge\bar{0}$ satisfies
\begin{equation*}
e^{-\partial_t/a}\,p_{\bar{\mathcal{W}}}(\bar{n},t)=(1+\lambda/b)p_{\bar{\mathcal{W}}}(\bar{n},t)-\frac{1}{b}\sum_{i=1}^{q}\sum_{j_i=1}^{k_i}\lambda_{ij_i}p_{\bar{\mathcal{W}}}(\bar{n}-\bar{\epsilon}^{j_i}_i,t)
\end{equation*}
with initial condition $p_{\bar{\mathcal{W}}}(\bar{n},0)=\mathbb{I}_{\{\bar{n}=\bar{0}\}}$. Here, $e^{-\partial_t/a}$ is the shift operator defined in \eqref{gammaoper}.
\end{proposition}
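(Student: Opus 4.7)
The plan is to use the subordination representation
\begin{equation*}
p_{\bar{\mathcal{W}}}(\bar{n},t)=\int_{0}^{\infty}p(\bar{n},x)\,h(x,t)\,\mathrm{d}x,
\end{equation*}
where $h(\cdot,t)$ is the gamma density satisfying \eqref{erkq} and $p(\bar{n},\cdot)$ is the MGCP pmf, which obeys the ODE \eqref{propmgcp}. The idea, mirroring the approach announced in the previous subsections (e.g.\ the proof for the MGFCP via an inverse stable time change), is to compute the auxiliary integral $\int_{0}^{\infty}p(\bar{n},x)\,\partial_x h(x,t)\,\mathrm{d}x$ in two different ways and equate the results.

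First, I would substitute \eqref{erkq} directly, which gives
\begin{equation*}
\int_{0}^{\infty}p(\bar{n},x)\,\frac{\partial}{\partial x}h(x,t)\,\mathrm{d}x=-b\bigl(1-e^{-\partial_t/a}\bigr)\int_{0}^{\infty}p(\bar{n},x)\,h(x,t)\,\mathrm{d}x=-b\bigl(1-e^{-\partial_t/a}\bigr)p_{\bar{\mathcal{W}}}(\bar{n},t),
\end{equation*}
since the shift operator \eqref{gammaoper} acts only in $t$ and therefore commutes with integration in $x$. Second, integrating by parts and invoking the boundary behaviour of $h$ (vanishing as $x\to\infty$, with the $x=0$ contribution handled via $h(x,0)=\delta(x)$ so as to kill the boundary term against the bounded factor $p(\bar n,x)$) converts the same integral into $-\int_{0}^{\infty}h(x,t)\,\partial_x p(\bar{n},x)\,\mathrm{d}x$. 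Plugging in the governing ODE \eqref{propmgcp} for $\partial_x p(\bar{n},x)$ and re-expressing each term through the subordination identity produces
\begin{equation*}
\lambda\,p_{\bar{\mathcal{W}}}(\bar{n},t)-\sum_{i=1}^{q}\sum_{j_i=1}^{k_i}\lambda_{ij_i}\,p_{\bar{\mathcal{W}}}(\bar{n}-\bar{\epsilon}_i^{j_i},t).
\end{equation*}
Equating the two expressions and dividing through by $b$ yields the claimed identity. The initial condition $p_{\bar{\mathcal{W}}}(\bar{n},0)=\mathbb{I}_{\{\bar{n}=\bar{0}\}}$ follows at once from $G_{a,b}(0)=0$ together with $\bar{M}(0)=\bar{0}$.

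The main obstacle is the rigorous justification of the integration by parts and of pulling the operator $e^{-\partial_t/a}$ through the $x$-integral. The gamma density $h(x,t)=\tfrac{a^{bt}}{\Gamma(bt)}x^{bt-1}e^{-ax}$ is integrable on $(0,\infty)$ for all $t>0$, but it is singular at $x=0$ when $bt<1$, so the boundary term at $x=0$ in the integration by parts must be controlled using the integrability of $x^{bt-1}$ and the bound $|p(\bar n,x)|\leq 1$, or by interpreting the flux identity in the sense of distributions consistent with $h(x,0)=\delta(x)$ and $\lim_{|x|\to\infty}h(x,t)=0$ as stated after \eqref{erkq}. Likewise the formal operator $e^{-\partial_t/a}=\sum_{n\geq 0}(-\partial_t/a)^n/n!$ must act on an analytic function of $t$, which is ensured by the explicit form of $h(x,t)$ in \eqref{gammadensity}. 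Once these analytic points are dispatched exactly as in Proposition~5.2 of Kataria \textit{et al.} (2022), the remainder of the argument is purely algebraic and proceeds as sketched above.
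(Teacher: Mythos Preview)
Your proposal is correct and follows essentially the same route the paper indicates: the paper omits the proof, noting only that on using \eqref{erkq} it is similar to Proposition~5.2 of Kataria \textit{et al.}~(2022), which is precisely the subordination-plus-integration-by-parts argument you outline.
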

 On using the transition probabilities of MGCP time-changed by gamma subordinator given in \eqref{transgamma} and following along the similar lines to the proof of Proposition \ref{prppp}, we get the next result. 
\begin{proposition}The state probabilities $p_{\bar{\mathcal{W}}}(\bar{n},t)=\bar{n}\}$, $\bar{n}\ge\bar{0}$ solve
{\footnotesize	\begin{align*}
		\frac{\mathrm{d}}{\mathrm{d}t}p_{\bar{\mathcal{W}}}(\bar{n},t)&=-b\log(1+\lambda/a) p_{\bar{\mathcal{W}}}(\bar{n},t)+b\sum_{\bar{l}\succ\bar{0}}p_{\bar{\mathcal{W}}}(\bar{n}-\bar{l},t)\displaystyle\sum_{\substack{\Omega(k_i,l_i)\\i=1,2,\dots,q}}\Big(\sum_{i=1}^{q}\sum_{j_i=1}^{k_i}l_{ij_i}-1\Big)!\prod_{i=1}^{q}\prod_{j_i=1}^{k_i}\frac{(\lambda_{ij_i}/(\lambda+a))^{l_{ij_i}}}{l_{ij_i}!}
\end{align*}}
with $p_{\bar{\mathcal{W}}}(\bar{n},0)=\mathbb{I}_{\{\bar{n}=\bar{0}\}}$. Here,  $\Omega(k_i,l_i)=\{(l_{i1},l_{i2},\dots,l_{ik_i}):\sum_{j_i=1}^{k_i}j_il_{ij_i}=l_i,\,\, l_{ij_i}\in\mathbb{N}_0\}$.
\end{proposition}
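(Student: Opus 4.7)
The plan is to mimic, step for step, the forward-equation argument used in the proof of Proposition \ref{prppp}, with the stable-subordinator transition probabilities \eqref{transprbstable} replaced by their gamma-subordinator analogues \eqref{transgamma}. Since $\{\bar{\mathcal{W}}^{a,b}(t)\}_{t\ge 0}$ is the composition of the L\'evy MGCP with an independent subordinator, it is itself a Markov process with stationary independent increments, so one may condition on $\bar{\mathcal{W}}^{a,b}(t)$ and decompose the resulting sum into the diagonal $\bar{l}=\bar{0}$ term and the off-diagonal $\bar{l}\succ\bar{0}$ terms.

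Concretely, I would first write
\begin{align*}
p_{\bar{\mathcal{W}}}(\bar{n},t+h) &= p_{\bar{\mathcal{W}}}(\bar{n},t)\,\mathrm{Pr}\{\bar{\mathcal{W}}^{a,b}(t+h)=\bar{n}\mid \bar{\mathcal{W}}^{a,b}(t)=\bar{n}\}\\
&\quad+\sum_{\bar{l}\succ\bar{0}}p_{\bar{\mathcal{W}}}(\bar{n}-\bar{l},t)\,\mathrm{Pr}\{\bar{\mathcal{W}}^{a,b}(t+h)=\bar{n}\mid \bar{\mathcal{W}}^{a,b}(t)=\bar{n}-\bar{l}\}+o(h).
\end{align*}
Substituting \eqref{transgamma}, the diagonal term contributes $(1-hb\log(1+\lambda/a))p_{\bar{\mathcal{W}}}(\bar{n},t)$, and each off-diagonal term contributes $hb$ times the combinatorial factor that appears in the statement. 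Subtracting $p_{\bar{\mathcal{W}}}(\bar{n},t)$, dividing by $h$, and letting $h\to 0$ yields the claimed system; the initial condition $p_{\bar{\mathcal{W}}}(\bar{n},0)=\mathbb{I}_{\{\bar{n}=\bar{0}\}}$ is immediate from $\bar{\mathcal{W}}^{a,b}(0)=\bar{0}$.

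The main technical point will be justifying that the error terms $o(h)$ from the infinitely many off-diagonal contributions collectively remain $o(h)$, i.e.~the exchange of $\lim_{h\to 0}$ with the sum over $\bar{l}\succ\bar{0}$. This can be settled by noting that the total mass $\sum_{\bar{l}\succ\bar{0}}\mathrm{Pr}\{\bar{\mathcal{W}}^{a,b}(h)=\bar{l}\}=1-\mathrm{Pr}\{\bar{\mathcal{W}}^{a,b}(h)=\bar{0}\}$ is itself $O(h)$ (since $\mathrm{Pr}\{\bar{\mathcal{W}}^{a,b}(h)=\bar{0}\}=(1+\lambda/a)^{-bh}=1-bh\log(1+\lambda/a)+O(h^2)$ by \eqref{gammaberfn}), so dominated convergence applies to the bounded integrand $p_{\bar{\mathcal{W}}}(\bar{n}-\bar{l},t)\le 1$.

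As a sanity check, one may verify at the level of the pgf that both sides of the proposed ODE, when multiplied by $\prod_i u_i^{n_i}$ and summed over $\bar{n}\ge\bar{0}$, produce the identity
\[\frac{\partial}{\partial t}G_{\bar{\mathcal{W}}}(\bar{u},t)=-b\log\Big(1+\tfrac{1}{a}\sum_{i=1}^{q}\sum_{j_i=1}^{k_i}\lambda_{ij_i}(1-u_i^{j_i})\Big)G_{\bar{\mathcal{W}}}(\bar{u},t),\]
which follows directly from the explicit form $G_{\bar{\mathcal{W}}}(\bar{u},t)=(1+a^{-1}\sum_{i,j_i}\lambda_{ij_i}(1-u_i^{j_i}))^{-bt}$ obtained just before the statement. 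The combinatorial sum in the proposition is recognised as the Taylor expansion of $-\log(1-w)$ with $w=(\lambda+a)^{-1}\sum_{i,j_i}\lambda_{ij_i}u_i^{j_i}$, and adding $\log(1+\lambda/a)$ recovers $\log(1+a^{-1}\sum_{i,j_i}\lambda_{ij_i}(1-u_i^{j_i}))$, confirming consistency.
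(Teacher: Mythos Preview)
Your proposal is correct and follows exactly the approach indicated in the paper: the paper merely states that the result follows ``on using the transition probabilities \dots\ given in \eqref{transgamma} and following along the similar lines to the proof of Proposition \ref{prppp}'', and this is precisely what you carry out. Your additional remarks on the $o(h)$ control and the pgf consistency check go beyond what the paper supplies but are consistent with its framework.
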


\begin{theorem}
The state probabilities of $\{\bar{\mathcal{W}}^{a,b}(t)\}_{t\ge0}$, $a>0,b>0,$ are given by
\begin{equation*}
p_{\bar{\mathcal{W}}}(\bar{n},t)=\frac{(a/(\lambda+a))^{bt}}{\Gamma(bt)}\sum_{\substack{\Omega(k_i,n_i)\\i=1,2,\dots,q}}\Gamma\bigg(\sum_{i=1}^{q}\sum_{j_i=1}^{k_i}n_{ij_i}+bt\bigg)\prod_{i=1}^{q}\prod_{j_i=1}^{k_i}\frac{(\lambda_{ij_i}/(\lambda+a))^{n_{ij_i}}}{n_{ij_i}!},\,\,\bar{n}\ge\bar{0}, \\
\end{equation*}
where $\Omega(k_i,n_i)=\{(n_{i1},n_{i2},\dots,n_{ik_i}):\sum_{j_i=1}^{k_i}j_in_{ij_i}=n_i,\,\, n_{ij_i}\in\mathbb{N}_0\}$ and $\lambda=\sum_{i=1}^{q}\sum_{j_i=1}^{k_i}\lambda_{ij_i}$.
\end{theorem}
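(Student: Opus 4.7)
The plan is to proceed by conditioning on the gamma subordinator and carry out the resulting integral explicitly. Since $\{\bar{M}(t)\}_{t\geq0}$ and $\{G_{a,b}(t)\}_{t\geq0}$ are independent, definition \eqref{Mgammadef} together with the total probability formula gives
\begin{equation*}
p_{\bar{\mathcal{W}}}(\bar{n},t)=\int_{0}^{\infty}p(\bar{n},x)\,h(x,t)\,\mathrm{d}x,
\end{equation*}
where $p(\bar{n},x)$ is the pmf of the MGCP from \eqref{jopmf} and $h(x,t)$ is the density of the gamma subordinator from \eqref{gammadensity}.

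Next I would substitute both expressions and rearrange. Plugging in \eqref{jopmf} and \eqref{gammadensity}, and using that the $k_i$-fold inner products combine with $e^{-\lambda_{ij_i}x}$ into a single exponential $e^{-\lambda x}$ with $\lambda=\sum_{i=1}^{q}\sum_{j_i=1}^{k_i}\lambda_{ij_i}$, the integrand takes the form
\begin{equation*}
\frac{a^{bt}}{\Gamma(bt)}\,x^{bt-1}e^{-(a+\lambda)x}\sum_{\substack{\Omega(k_i,n_i)\\i=1,2,\dots,q}}\prod_{i=1}^{q}\prod_{j_i=1}^{k_i}\frac{\lambda_{ij_i}^{n_{ij_i}}\,x^{n_{ij_i}}}{n_{ij_i}!}.
\end{equation*}
I then interchange the finite sum over $\Omega(k_i,n_i)$ with the integral (justified by non-negativity), pull out all $x$-independent factors, and collapse the powers of $x$ into the single exponent $bt-1+\sum_{i=1}^{q}\sum_{j_i=1}^{k_i}n_{ij_i}$.

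The remaining integral is a standard gamma integral,
\begin{equation*}
\int_{0}^{\infty}x^{bt-1+\sum_{i,j_i}n_{ij_i}}e^{-(a+\lambda)x}\,\mathrm{d}x=\frac{\Gamma\bigl(bt+\sum_{i=1}^{q}\sum_{j_i=1}^{k_i}n_{ij_i}\bigr)}{(a+\lambda)^{\,bt+\sum_{i,j_i}n_{ij_i}}}.
\end{equation*}
Finally, I would absorb the factor $(a+\lambda)^{-n_{ij_i}}$ into each $\lambda_{ij_i}^{n_{ij_i}}$ term to form $(\lambda_{ij_i}/(\lambda+a))^{n_{ij_i}}$, and combine the remaining $a^{bt}(a+\lambda)^{-bt}$ into $(a/(\lambda+a))^{bt}$. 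This matches the claimed formula.

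There is no serious obstacle: the argument is a direct integration after conditioning, and the only care required is bookkeeping so that the exponent $bt-1+\sum_{i,j_i}n_{ij_i}$ is assembled correctly before invoking the gamma integral, and so that the constraint $\sum_{j_i=1}^{k_i}j_i n_{ij_i}=n_i$ encoded in $\Omega(k_i,n_i)$ is preserved throughout the sum/integral interchange. As a sanity check, setting $q=1$ should recover the known pmf of the GCP time-changed by a gamma subordinator (cf.\ Kataria and Khandakar (2022b)), and $k_1=k_2=\dots=k_q=1$ should reduce to the multivariate negative binomial type expression expected from a multivariate Poisson process subordinated to a gamma process.
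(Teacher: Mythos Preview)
Your proposal is correct and follows essentially the same approach as the paper: condition on the gamma subordinator, substitute the MGCP pmf \eqref{jopmf} and the gamma density \eqref{gammadensity}, interchange sum and integral, and evaluate the resulting gamma integral. The paper's proof is exactly this computation with the same three displayed steps.
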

\begin{proof}
From \eqref{Mgammadef}, we can write
\begin{align*}
p_{\bar{\mathcal{W}}}(\bar{n},t)&=\int_{0}^{\infty}p(\bar{n},x)h(x,t)\,\mathrm{d}x\\
&=\frac{a^{bt}}{\Gamma(bt)}\sum_{\substack{\Omega(k_i,n_i)\\i=1,2,\dots,q}}\bigg(\prod_{i=1}^{q}\prod_{j_i=1}^{k_i}\frac{\lambda_{ij_i}^{n_{ij_i}}}{n_{ij_i}!}\bigg)\int_{0}^{\infty}e^{-(\lambda+a)x}x^{\sum_{i=1}^{q}\sum_{j_i=1}^{k_i}n_{ij_i}+bt-1}\,\mathrm{d}x\\
&=\frac{(a/(\lambda+a))^{bt}}{\Gamma(bt)}\sum_{\substack{\Omega(k_i,n_i)\\i=1,2,\dots,q}}\Gamma\bigg(\sum_{i=1}^{q}\sum_{j_i=1}^{k_i}n_{ij_i}+bt\bigg)\prod_{i=1}^{q}\prod_{j_i=1}^{k_i}\frac{(\lambda_{ij_i}/(\lambda+a))^{n_{ij_i}}}{n_{ij_i}!},
\end{align*}
where $h(x,t)$ is the density of gamma subordinator and second step follows on using \eqref{gammadensity} and \eqref{jopmf}. This completes the proof.
\end{proof}
The proof of next result follows similar lines to that of Proposition $\ref{levymeasalpha}$. Here, we need to use the L\'evy measure of gamma subordinator, that is, $\mu_{G_{a,b}}(\mathrm{d}s)=bs^{-1}e^{-as}\mathrm{d}s$, $a>0$, $b>0$.
\begin{proposition}
The L\'evy measure of $\{\bar{\mathcal{W}}^{a,b}(t)\}_{t\ge0}$ is given by
{\small\begin{align}\label{lemeasgammaa}
\Pi^{\bar{\mathcal{W}}}(A_1\times A_2\times\dots\times A_q)
&=b \sum_{\bar{n}\succ\bar{0}}\sum_{\substack{\Omega(k_i,n_i)\\i=1,2,\dots,q}}\Gamma\bigg(\sum_{i=1}^{q}\sum_{j_i=1}^{k_i}n_{ij_i}\bigg)\prod_{i=1}^{q}\prod_{j_i=1}^{k_i}\frac{(\lambda_{ij_i}/(\lambda+a))^{n_{ij_i}}}{n_{ij_i}!}\mathbb{I}_{\{n_i\in A_i\}},
\end{align}}
where $\Omega(k_i,n_i)=\{(n_{i1},n_{i2},\dots,n_{ik_i}):\sum_{j_i=1}^{k_i}j_in_{ij_i}=n_i,\,n_{ij_i}\in\mathbb{N}_0\}$ and $\lambda=\sum_{i=1}^{q}\sum_{x_l=1}^{k_i}\lambda_{ij_i}$.
\end{proposition}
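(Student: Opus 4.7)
The plan is to follow the template of Proposition \ref{levymeasalpha}, replacing the L\'evy measure of the $\alpha$-stable subordinator with that of the gamma subordinator. By Eq. (30.8) of Sato (1999), for a subordinated L\'evy process $\bar{M}(G_{a,b}(t))$ where $\bar{M}$ has transition law $p(\bar{n},s)$ and $G_{a,b}$ has L\'evy measure $\mu_{G_{a,b}}$, the L\'evy measure of the time-changed process is obtained by integrating the jump law against $\mu_{G_{a,b}}$:
\begin{equation*}
\Pi^{\bar{\mathcal{W}}}(A_1\times\cdots\times A_q)=\int_0^\infty \sum_{\bar{n}\succ\bar{0}} p(\bar{n},s)\,\mathbb{I}_{\{n_1\in A_1,\dots,n_q\in A_q\}}\,\mu_{G_{a,b}}(\mathrm{d}s).
\end{equation*}

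Next I would substitute the pmf of MGCP from \eqref{jopmf} and the L\'evy measure $\mu_{G_{a,b}}(\mathrm{d}s)=bs^{-1}e^{-as}\mathrm{d}s$. Interchanging sum and integral (justified by nonnegativity and Tonelli), the integrand becomes
\begin{equation*}
b\sum_{\bar{n}\succ\bar{0}}\sum_{\substack{\Omega(k_i,n_i)\\i=1,\dots,q}}\Bigg(\prod_{i=1}^{q}\prod_{j_i=1}^{k_i}\frac{\lambda_{ij_i}^{n_{ij_i}}}{n_{ij_i}!}\,\mathbb{I}_{\{n_i\in A_i\}}\Bigg)\,s^{\sum_{i,j_i}n_{ij_i}-1}\,e^{-(\lambda+a)s},
\end{equation*}
since $\prod_{i,j_i}e^{-\lambda_{ij_i}s}=e^{-\lambda s}$ with $\lambda=\sum_{i,j_i}\lambda_{ij_i}$, combining with $e^{-as}$ from $\mu_{G_{a,b}}$.

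Now I would evaluate the $s$-integral using the standard gamma identity: for $\sum_{i,j_i}n_{ij_i}\ge 1$ (which holds since $\bar{n}\succ\bar{0}$ forces at least one $n_{ij_i}\ge1$),
\begin{equation*}
\int_0^\infty s^{\sum_{i,j_i}n_{ij_i}-1}e^{-(\lambda+a)s}\,\mathrm{d}s=\frac{\Gamma\big(\sum_{i,j_i}n_{ij_i}\big)}{(\lambda+a)^{\sum_{i,j_i}n_{ij_i}}}.
\end{equation*}
Finally, distributing the denominator using $(\lambda+a)^{\sum_{i,j_i}n_{ij_i}}=\prod_{i,j_i}(\lambda+a)^{n_{ij_i}}$ and combining with the numerator $\lambda_{ij_i}^{n_{ij_i}}$ produces $\prod_{i,j_i}(\lambda_{ij_i}/(\lambda+a))^{n_{ij_i}}$, which yields the stated expression.

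The only delicate point is to verify finiteness of the integral on the boundary $\sum_{i,j_i}n_{ij_i}=1$: there the integrand behaves like $s^0 e^{-(\lambda+a)s}$ near zero, which is integrable, and the integral equals $1/(\lambda+a)$, matching the $\Gamma(1)=1$ case of the formula. Hence no additional care is required and the formula is valid for all $\bar{n}\succ\bar{0}$.
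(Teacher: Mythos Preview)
Your proof is correct and follows essentially the same approach as the paper: the paper explicitly states that the proof follows similar lines to that of Proposition~\ref{levymeasalpha}, using the L\'evy measure $\mu_{G_{a,b}}(\mathrm{d}s)=bs^{-1}e^{-as}\,\mathrm{d}s$ of the gamma subordinator in place of that of the $\alpha$-stable subordinator. Your added remark on the integrability at $\sum_{i,j_i}n_{ij_i}=1$ is a welcome clarification but does not change the method.
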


\begin{remark}
For $q=1$, the L\'evy measure \eqref{lemeasgammaa} reduces to that of GCP time-changed by a gamma subordinator (see Kataria and Khandakar (2022b), Eq. 4.12).
\end{remark}
\subsection{MGCP time-changed by inverse Gaussian subordinator}
For $i=1,2,\dots,q$, let the GCPs $\{M_i(t)\}_{t\ge0}$ be independent of an inverse Gaussian subordinator $\{I_{\delta,\gamma}(t)\}_{t\ge0}$, $\delta>0,\gamma>0$. Here, we consider the MGCP time-changed by inverse Gaussian subordinator. It is defined as follows:
\begin{equation}\label{invgaudef}
	\bar{\mathcal{X}}^{\delta,\gamma}(t)\coloneqq \bar{M}(I_{\delta,\gamma}(t)), \, t\ge0.
\end{equation} 
\begin{remark}
For $q=1$, the process defined in \eqref{invgaudef} reduces to a time-changed variant of the GCP (see Kataria and Khandakar (2022b), Section 4.2.3).
\end{remark}
From Remark $\ref{remk}$, we observe that the component processes in $\{\bar{\mathcal{X}}^{\delta,\gamma}(t)\}_{t\ge0}$ are conditionally independent given $\{I_{\delta,\gamma}(t)\}_{t\ge0}$.

The transition probabilities of $\{\bar{\mathcal{X}}^{\delta,\gamma}(t)\}_{t\ge0}$ are given by
{\footnotesize\begin{align}\label{transinvgauss}
\mathrm{Pr}\{\bar{\mathcal{X}}^{\delta,\gamma}(t+h)&=\bar{n}+\bar{l}|\bar{\mathcal{X}}^{\delta,\gamma}(t)=\bar{n}\}\nonumber\\
&=\begin{cases}
	h\delta \frac{\sqrt{2\lambda+\gamma^2}}{2\sqrt{\pi}}\displaystyle\sum_{\substack{\Omega(k_i,l_i)\\i=1,2,\dots,q}}\Gamma\bigg(\sum_{i=1}^{q}\sum_{j_i=1}^{k_i}l_{ij_i}-1/2\bigg)\prod_{i=1}^{q}\prod_{j_i=1}^{k_i}\bigg(\frac{2\lambda_{ij_i}}{2\lambda+\gamma^2}\bigg)^{l_{ij_i}}\frac{1}{l_{ij_i}!}+o(h),\,\, \bar{l}\succ\bar{0},\\
	1-\delta h(\sqrt{2\lambda+\gamma^2}-\gamma)+o(h),\,\, \bar{l}=\bar{0},
\end{cases}
\end{align}}
where $o(h)/h\to0$ as $h\to0$. Here,  $\Omega(k_i,l_i)=\{(l_{i1},l_{i2},\dots,l_{ik_i}):\sum_{j_i=1}^{k_i}j_il_{ij_i}=l_i,\,\, l_{ij_i}\in\mathbb{N}_0\}$ and $\lambda=\sum_{i=1}^{q}\sum_{j_i=1}^{k_i}\lambda_{ij_i}$.
\begin{remark}
For $q=1$, the transition probabilities in \eqref{transinvgauss} reduces to that of the GCP time-changed by an inverse Gaussian subordinator (see Kataria and Khandakar (2022b), Section 4.2.3).
\end{remark}
On using \eqref{lapinv} and the relation in \eqref{invgaudef}, the pgf of $\{\bar{\mathcal{X}}^{\delta,\gamma}(t)\}_{t\ge0}$ is given by
\begin{equation}\label{pgfinvgau}
G_{\bar{\mathcal{X}}}(\bar{u},t)=\exp\left(-\delta t\left(\sqrt{2\sum_{i=1}^{q}\sum_{j_i=1}^{k_i}\lambda_{ij_i}(1-u_i^{j_i})}-\gamma\right)\right),\,\,\, |u_i|\le1.
\end{equation}
So, 
\begin{equation*}
\frac{\partial}{\partial t}G_{\bar{\mathcal{X}}}(\bar{u},t)=-\delta \left(\sqrt{2\sum_{i=1}^{q}\sum_{j_i=1}^{k_i}\lambda_{ij_i}(1-u_i^{j_i})}-\gamma\right)G_{\bar{\mathcal{X}}}(\bar{u},t),\,\,\, G_{\bar{\mathcal{X}}}(\bar{u},0)=1.
\end{equation*}

The proof of next result follows similar lines to that of Proposition \ref{prppp}. Hence, it is omitted. Here, we need to use the transition probabilities of MGCP time-changed by inverse Gaussian subordinator given in \eqref{transinvgauss}.
	
	\begin{proposition} The state probabilities $p_{\bar{\mathcal{X}}}(\bar{n},t)=\mathrm{Pr}\{\bar{\mathcal{X}}^{\delta,\gamma}(t)=\bar{n}\}$, $\bar{n}\ge\bar{0}$ satisfy
		\begin{align*}
				\frac{\mathrm{d}}{\mathrm{d}t}p_{\bar{\mathcal{X}}}(\bar{n},t)&=-\delta (\sqrt{2\lambda+\gamma^2}-\gamma) p_{\bar{\mathcal{X}}}(\bar{n},t)+\delta \frac{\sqrt{2\lambda+\gamma^2}}{2\sqrt{\pi}}\sum_{\bar{l}\succ\bar{0}}p_{\bar{\mathcal{X}}}(\bar{n}-\bar{l},t)\\
				&\hspace{4cm}\cdot\displaystyle\sum_{\substack{\Omega(k_i,l_i)\\i=1,2,\dots,q}}\Gamma\bigg(\sum_{i=1}^{q}\sum_{j_i=1}^{k_i}l_{ij_i}-1/2\bigg)\prod_{i=1}^{q}\prod_{j_i=1}^{k_i}\bigg(\frac{2\lambda_{ij_i}}{2\lambda+\gamma^2}\bigg)^{l_{ij_i}}\frac{1}{l_{ij_i}!}
		\end{align*}
		with $p_{\bar{\mathcal{X}}}(\bar{n},0)=\mathbb{I}_{\{\bar{n}=\bar{0}\}}$. Here,  $\Omega(k_i,l_i)=\{(l_{i1},l_{i2},\dots,l_{ik_i}):\sum_{j_i=1}^{k_i}j_il_{ij_i}=l_i,\,\, l_{ij_i}\in\mathbb{N}_0\}$.
\end{proposition}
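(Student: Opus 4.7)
The plan is to mimic the forward-Kolmogorov style argument used in Proposition 3.4 (the MGSFCP case) but with the transition probabilities in \eqref{transinvgauss} replacing those in \eqref{transprbstable}. The setup is the same because $\{\bar{\mathcal{X}}^{\delta,\gamma}(t)\}_{t\ge0}$ is a L\'evy process (being the composition of independent L\'evy processes), so its state probabilities evolve according to a balance equation driven by the one-step transition probabilities over an infinitesimal interval of length $h$.

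First I would write the Chapman--Kolmogorov decomposition
\begin{equation*}
p_{\bar{\mathcal{X}}}(\bar{n},t+h)=p_{\bar{\mathcal{X}}}(\bar{n},t)\mathrm{Pr}\{\bar{\mathcal{X}}^{\delta,\gamma}(t+h)=\bar{n}\,|\,\bar{\mathcal{X}}^{\delta,\gamma}(t)=\bar{n}\}+\sum_{\bar{l}\succ\bar{0}}p_{\bar{\mathcal{X}}}(\bar{n}-\bar{l},t)\mathrm{Pr}\{\bar{\mathcal{X}}^{\delta,\gamma}(t+h)=\bar{n}\,|\,\bar{\mathcal{X}}^{\delta,\gamma}(t)=\bar{n}-\bar{l}\}+o(h),
\end{equation*}
which holds by the stationary independent increments property. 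Then I would substitute the transition probabilities from \eqref{transinvgauss}: the ``no jump'' case contributes $p_{\bar{\mathcal{X}}}(\bar{n},t)(1-\delta h(\sqrt{2\lambda+\gamma^2}-\gamma))$ and the ``jump by $\bar{l}\succ\bar{0}$'' case contributes $h$ times the sum involving the Gamma functions and $\Omega(k_i,l_i)$.

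Next I would subtract $p_{\bar{\mathcal{X}}}(\bar{n},t)$ from both sides, divide by $h$, collect the $o(h)/h$ term, and let $h\to0$. Because the coefficient $-\delta(\sqrt{2\lambda+\gamma^2}-\gamma)$ and the inner sums $\sum_{\bar{l}\succ\bar{0}}(\cdots)$ match exactly the two terms on the right-hand side of the claimed equation, the identification is immediate. The initial condition $p_{\bar{\mathcal{X}}}(\bar{n},0)=\mathbb{I}_{\{\bar{n}=\bar{0}\}}$ follows since $I_{\delta,\gamma}(0)=0$ and $\bar{M}(0)=\bar{0}$.

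The only step requiring a bit of care is justifying the interchange of the $h\to 0$ limit with the infinite sum over $\bar{l}\succ\bar{0}$; this is handled exactly as in Proposition 3.4, since the jump-intensity sum is absolutely convergent (it equals $\delta(\sqrt{2\lambda+\gamma^2}-\gamma)$, the total intensity read off from the pgf \eqref{pgfinvgau}), so dominated convergence applies. No other subtlety arises, which is why the authors omit the proof.
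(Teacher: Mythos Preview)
Your proposal is correct and follows essentially the same approach as the paper: the paper explicitly states that the proof follows similar lines to Proposition~\ref{prppp} using the transition probabilities \eqref{transinvgauss}, which is precisely the Chapman--Kolmogorov decomposition, substitution, and $h\to0$ limit you outline.
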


\begin{theorem}\label{thmappen}
The pmf  of $\{\bar{\mathcal{X}}^{\delta,\gamma}(t)\}_{t\ge0}$ is given by
\begin{equation*}
p_{\bar{\mathcal{X}}}(\bar{n},t)=e^{\delta \gamma t}\sum_{r=0}^{\infty}\frac{(-\sqrt{2\lambda}\delta t)^r}{r!}\sum_{\substack{\Omega(k_i,n_i)\\ i=1,2,\dots,q}}\Big(\frac{r}{2}\Big)_{\sum_{i=1}^{q}\sum_{j_i=1}^{k_i}x_{ij_i}}\prod_{i=1}^{q}\prod_{j_i=1}^{k_i}\frac{(-\lambda_{ij_i}/\lambda)^{x_{ij_i}}}{x_{ij_i}!},\, \bar{n}\ge\bar{0},
\end{equation*} 
where $(r/2)_x=(r/2)(r/2-1)\dots(r/2-x+1)$, $\lambda=\sum_{i=1}^{q}\sum_{j_i=1}^{k_i}\lambda_{ij_i}$ and $\Omega(k_i,n_i)=\{(x_{i1},x_{i2},\dots$, $x_{ik_i}):\sum_{j_i=1}^{k_i}j_ix_{ij_i}=n_i,\,x_{ij_i}\in\mathbb{N}_0\}$. 
\begin{proof}
See Appendix A4.
\end{proof}
\end{theorem}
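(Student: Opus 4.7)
The plan is to derive the state probabilities directly from the pgf \eqref{pgfinvgau} by expanding it as a multivariate power series in $u_1,u_2,\dots,u_q$ and reading off the coefficient of $\prod_{i=1}^{q}u_i^{n_i}$. This is the same template used in the proofs of Theorem \ref{thmyy} and Theorem \ref{prppmfab}, so the work consists mainly of careful bookkeeping rather than a new idea.

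First, I would factor out $e^{\delta\gamma t}$ and rewrite the square root as
\begin{equation*}
G_{\bar{\mathcal{X}}}(\bar{u},t)=e^{\delta\gamma t}\exp\!\left(-\delta t\sqrt{2\lambda}\left(1-\frac{1}{\lambda}\sum_{i=1}^{q}\sum_{j_i=1}^{k_i}\lambda_{ij_i}u_i^{j_i}\right)^{1/2}\right).
\end{equation*}
Expanding the outer exponential as a Taylor series produces
\begin{equation*}
G_{\bar{\mathcal{X}}}(\bar{u},t)=e^{\delta\gamma t}\sum_{r=0}^{\infty}\frac{(-\delta t\sqrt{2\lambda})^r}{r!}\left(1-\frac{1}{\lambda}\sum_{i=1}^{q}\sum_{j_i=1}^{k_i}\lambda_{ij_i}u_i^{j_i}\right)^{r/2}.
\end{equation*}

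Next, I would apply the generalized binomial theorem with half-integer exponent to $(1-z)^{r/2}$, where $z=\lambda^{-1}\sum_i\sum_{j_i}\lambda_{ij_i}u_i^{j_i}$. This yields the coefficient $(r/2)_x/x!$ with the falling-factorial convention $(r/2)_x=(r/2)(r/2-1)\cdots(r/2-x+1)$ fixed in the theorem statement, and a factor $(-z)^x$ whose sign will ultimately be absorbed into the $(-\lambda_{ij_i}/\lambda)^{x_{ij_i}}$ factors. Then, mirroring the chain of multinomial expansions used between equations \eqref{**} and \eqref{NNN}, I would expand $\big(\sum_i\sum_{j_i}\lambda_{ij_i}u_i^{j_i}\big)^x$ first by splitting $x=\sum_{i=1}^{q}x_i$ and then each $x_i=\sum_{j_i=1}^{k_i}x_{ij_i}$, producing monomials $\prod_{i,j_i}u_i^{j_i x_{ij_i}}$ with multinomial weights that telescope so that only the weight $1/\prod x_{ij_i}!$ survives alongside $(r/2)_{\sum_i\sum_{j_i}x_{ij_i}}$.

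Finally, regrouping according to $n_i=\sum_{j_i=1}^{k_i}j_i x_{ij_i}$ converts the inner sum over tuples $(x_{ij_i})$ into a sum over the sets $\Omega(k_i,n_i)$, and swapping the order of summations isolates the coefficient of $\prod_{i=1}^{q}u_i^{n_i}$. Comparing this with the identity $G_{\bar{\mathcal{X}}}(\bar{u},t)=\sum_{\bar{n}\ge\bar{0}}(\prod_i u_i^{n_i})p_{\bar{\mathcal{X}}}(\bar{n},t)$ delivers the stated expression for $p_{\bar{\mathcal{X}}}(\bar{n},t)$. The main obstacle is purely notational: keeping the nested sums, the alternating signs, and the factorial accounting aligned so that the binomial coefficient $\binom{r/2}{x}=(r/2)_x/x!$ reappears correctly as $(r/2)_{\sum_i\sum_{j_i}x_{ij_i}}$ in the final formula. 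Since each of these steps parallels the corresponding step in the proof of Theorem \ref{thmyy}, the argument can be written compactly and deferred to the appendix, as the theorem statement already indicates.
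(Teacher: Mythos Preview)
Your proposal is correct and follows essentially the same route as the paper's proof in Appendix A4: factor out $e^{\delta\gamma t}$, expand the exponential in $r$, apply the generalized binomial expansion to $(1-z)^{r/2}$ using the falling factorial $(r/2)_x$, unfold the multinomial sums exactly as in \eqref{**} and \eqref{NNN}, and then read off the coefficient of $\prod_i u_i^{n_i}$. The bookkeeping you describe matches the paper's line by line.
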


	To prove the next proposition, we will use the following result (see Vellaisamy and Kumar (2018), Eq. (3.3)):  
	\begin{equation}\label{vellinv}
		\frac{\partial^2}{\partial t^2}g(x,t)-2\delta \gamma \frac{\partial}{\partial t}g(x,t)=2\delta^2\frac{\partial}{\partial x}g(x,t),
	\end{equation}
	such that $\lim_{x\to\infty}g(x,t)=\lim_{x\to0}g(x,t)=0$ where $g(x,t)$ is the density of inverse Gaussian subordinator given in \eqref{gxtinv}.
\begin{proposition}
	The state probabilities $p_{\bar{\mathcal{X}}}(\bar{n},t)$, $\bar{n}\ge\bar{0}$ solve
	\begin{equation*}
		\Big(\frac{\partial^2}{\partial t^2}-2\delta \gamma \frac{\partial}{\partial t}\Big)p_{\bar{\mathcal{X}}}(\bar{n},t)=2\delta^2\lambda p_{\bar{\mathcal{X}}}(\bar{n},t)-2\delta^2\sum_{i=1}^{q}\sum_{j_i=1}^{k_i}\lambda_{ij_i}p_{\bar{\mathcal{X}}}(\bar{n}-\bar{\epsilon}^{j_i}_i,t),
	\end{equation*}
	where $\lambda=\sum_{i=1}^{q}\sum_{j_i=1}^{k_i}\lambda_{ij_i}$ and $\bar{\epsilon}_i^{j_i}\in \mathbb{N}_{0}^q$ is a $q$-tuple vector whose $i^{th}$ entry is $j_i$ and other entries are zero. 
	\begin{proof}
		On using \eqref{invgaudef}, we can write
		\begin{equation}\label{aruninv}
			p_{\bar{\mathcal{X}}}(\bar{n},t)=\int_{0}^{	\infty}p(\bar{n},x)g(x,t)\,\mathrm{d}x.
		\end{equation}
		So, 
		\begin{align*}
			\Big(\frac{\partial^2}{\partial t^2}-2\delta \gamma \frac{\partial}{\partial t}\Big)p_{\bar{\mathcal{X}}}(\bar{n},t)&=\int_{0}^{\infty}\Big(\frac{\partial^2}{\partial t^2}-2\delta \gamma \frac{\partial}{\partial t}\Big)g(x,t)\,\mathrm{d}x\\
			&=2\delta^2\int_{0}^{\infty}p(\bar{n},x)\frac{\partial}{\partial x}g(x,t)\, \mathrm{d}x,\,\,\, \text{(using \eqref{vellinv})}\\
			&=-2\delta^2\int_{0}^{\infty}g(x,t)\frac{\mathrm{d}}{\mathrm{d} x}p(\bar{n},x)\, \mathrm{d}x\\
			&=-2\delta^2\int_{0}^{\infty}g(x,t)\bigg(-\lambda p(\bar{n},x)+\sum_{i=1}^{q}\sum_{j_i=1}^{k_i}\lambda_{ij_i}p(\bar{n}-\bar{\epsilon}^{j_i}_i,x)\bigg)\\
			&=2\delta^2\lambda p_{\bar{\mathcal{X}}}(\bar{n},t)-2\delta^2\sum_{i=1}^{q}\sum_{j_i=1}^{k_i}\lambda_{ij_i}p_{\bar{\mathcal{X}}}(\bar{n}-\bar{\epsilon}^{j_i}_i,t),
		\end{align*}
		where the penultimate and the last step follows on using \eqref{propmgcp} and \eqref{aruninv}, respectively. This completes the proof.
	\end{proof}
\end{proposition}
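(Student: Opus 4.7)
The plan is to exploit the subordination representation and push the time-differential operator on the left-hand side through the integral against the density of the inverse Gaussian subordinator, converting it into a spatial derivative that can be handled using the governing equation \eqref{propmgcp} of the MGCP.

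First, by conditioning on $I_{\delta,\gamma}(t)$ and using the independence between the MGCP and the subordinator, I would write
\begin{equation*}
p_{\bar{\mathcal{X}}}(\bar{n},t)=\int_{0}^{\infty}p(\bar{n},x)\,g(x,t)\,\mathrm{d}x,
\end{equation*}
where $g(x,t)$ is the density given in \eqref{gxtinv}. Applying the operator $\partial_t^2-2\delta\gamma\partial_t$ and differentiating under the integral sign, then invoking the PDE \eqref{vellinv} satisfied by $g(x,t)$, the right-hand side becomes $2\delta^2\int_0^\infty p(\bar{n},x)\,\partial_x g(x,t)\,\mathrm{d}x$.

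Next, I would integrate by parts in $x$. The boundary terms vanish since $\lim_{x\to 0}g(x,t)=\lim_{x\to\infty}g(x,t)=0$ (as recorded after \eqref{vellinv}) and $p(\bar{n},x)$ is bounded by $1$. This transfers the derivative onto $p(\bar{n},x)$, producing $-2\delta^2\int_0^\infty g(x,t)\,\partial_x p(\bar{n},x)\,\mathrm{d}x$. Now I would substitute the MGCP master equation \eqref{propmgcp}, namely
\begin{equation*}
\frac{\mathrm{d}}{\mathrm{d}x}p(\bar{n},x)=-\lambda\,p(\bar{n},x)+\sum_{i=1}^{q}\sum_{j_i=1}^{k_i}\lambda_{ij_i}\,p(\bar{n}-\bar{\epsilon}_i^{j_i},x),
\end{equation*}
and finally re-express the resulting integrals back in terms of $p_{\bar{\mathcal{X}}}$ through the subordination formula, which immediately yields the desired identity.

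The main obstacle is ensuring that the interchange of differentiation and integration and the integration-by-parts step are justified. Both come down to checking enough integrability and boundary decay of $g$ and its $t$-derivatives; the explicit form of $g(x,t)$ shows Gaussian-type decay at infinity and the essential singularity kills the density at $0^+$, so dominated convergence applies and the boundary contributions vanish. Once this regularity is dispatched, the remaining steps are direct substitutions.
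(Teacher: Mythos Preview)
Your proposal is correct and follows essentially the same route as the paper: express $p_{\bar{\mathcal{X}}}$ via the subordination integral, push the operator $\partial_t^2-2\delta\gamma\partial_t$ onto $g(x,t)$ using \eqref{vellinv}, integrate by parts with the vanishing boundary values of $g$, and substitute \eqref{propmgcp}. The paper does not spell out the dominated-convergence and boundary-decay justifications you mention, so your write-up is, if anything, slightly more careful.
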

The next result can be proved along the similar lines to that of Proposition $\ref{levymeasalpha}$. Here, we have to use the L\'evy measure of inverse Gaussian subordinator, that is, $\mu_{I_{\delta,\gamma}}(\mathrm{d}s)=\delta e^{-\gamma^2s/2}$ $\mathrm{d}s/\sqrt{2\pi s^3}$, $\delta>0$, $\gamma>0$.

\begin{proposition}
	The L\'evy measure of $\{\bar{\mathcal{X}}^{\delta,\gamma}(t)\}_{t\ge0}$ is given by
	{\small\begin{align*}
			\Pi^{\bar{\mathcal{X}}}(A_1\times A_2\times\dots\times A_q)
			&=\frac{\delta\sqrt{2\lambda+\gamma^2}}{2\sqrt{\pi}} \sum_{\bar{n}\succ\bar{0}}\sum_{\substack{\Omega(k_i,n_i)\\i=1,2,\dots,q}}\Gamma\bigg(\sum_{i=1}^{q}\sum_{j_i=1}^{k_i}n_{ij_i}-\frac{1}{2}\bigg)\prod_{i=1}^{q}\prod_{j_i=1}^{k_i}\Big(\frac{2\lambda_{ij_i}}{2\lambda+\gamma^2}\Big)^{n_{ij_i}}\frac{\mathbb{I}_{\{n_i\in A_i\}}}{n_{ij_i}!},
	\end{align*}}
	where $\Omega(k_i,n_i)=\{(n_{i1},n_{i2},\dots,n_{ik_{i}}):\sum_{j_{i}=1}^{k_i}j_in_{ij_{i}}=n_i,\,n_{ij_{i}}\in\mathbb{N}_{0}\}$ and $\lambda=\sum_{i=1}^{q}\sum_{j_{i}=1}^{k_i}\lambda_{ij_{i}}$.
\end{proposition}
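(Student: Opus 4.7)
The plan is to imitate the proof of Proposition \ref{levymeasalpha} verbatim, replacing the stable L\'evy measure by the inverse Gaussian one. By Eq.~(30.8) of Sato (1999), for a subordinated L\'evy process the L\'evy measure on cylinder sets is obtained by integrating the pmf of the base process against the L\'evy measure of the subordinator:
\begin{equation*}
\Pi^{\bar{\mathcal{X}}}(A_1\times\dots\times A_q)=\int_{0}^{\infty}\sum_{\bar{n}\succ\bar{0}}p(\bar{n},s)\,\mathbb{I}_{\{n_i\in A_i,\,1\le i\le q\}}\,\mu_{I_{\delta,\gamma}}(\mathrm{d}s),
\end{equation*}
where $p(\bar{n},s)$ is the pmf of the MGCP given in \eqref{jopmf} and $\mu_{I_{\delta,\gamma}}(\mathrm{d}s)=\delta(2\pi s^3)^{-1/2}e^{-\gamma^2 s/2}\,\mathrm{d}s$.

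Next, I would substitute \eqref{jopmf}, noting that $\sum_{i=1}^{q}\sum_{j_i=1}^{k_i}\lambda_{ij_i}=\lambda$, so the product of exponentials collapses to $e^{-\lambda s}$. After exchanging the sums over $\bar{n}$ and $\Omega(k_i,n_i)$ with the integral (justified by nonnegativity and Tonelli), every term contributes an integral of the form
\begin{equation*}
\int_{0}^{\infty}s^{\sum_{i,j_i}n_{ij_i}-3/2}\exp\bigl(-(\lambda+\gamma^2/2)s\bigr)\,\mathrm{d}s=\Gamma\Bigl(\sum_{i,j_i}n_{ij_i}-\tfrac{1}{2}\Bigr)\bigl(\lambda+\gamma^2/2\bigr)^{-\sum_{i,j_i}n_{ij_i}+1/2},
\end{equation*}
which converges because $\bar{n}\succ\bar{0}$ forces $\sum n_{ij_i}\ge1>1/2$.

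The remaining step is purely algebraic tidying: pulling out the common factor $\delta(2\pi)^{-1/2}(\lambda+\gamma^2/2)^{1/2}=\delta\sqrt{2\lambda+\gamma^2}/(2\sqrt{\pi})$ and absorbing the factor $(\lambda+\gamma^2/2)^{-\sum n_{ij_i}}=2^{\sum n_{ij_i}}(2\lambda+\gamma^2)^{-\sum n_{ij_i}}$ into the product over $(i,j_i)$ to produce the ratios $2\lambda_{ij_i}/(2\lambda+\gamma^2)$. This matches the claimed formula. No step is genuinely difficult here; the only bookkeeping point worth care is tracking the normalizing constant so that $\sqrt{2\pi}$ and the factor $(\lambda+\gamma^2/2)^{1/2}$ combine cleanly into $\sqrt{2\lambda+\gamma^2}/(2\sqrt{\pi})$.
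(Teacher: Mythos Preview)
Your proposal is correct and follows exactly the approach the paper indicates: it applies Sato's Eq.~(30.8) with the inverse Gaussian L\'evy measure $\mu_{I_{\delta,\gamma}}(\mathrm{d}s)=\delta(2\pi s^3)^{-1/2}e^{-\gamma^2 s/2}\,\mathrm{d}s$ in place of the stable one, reducing to the same Gamma integral as in Proposition~\ref{levymeasalpha}. The algebraic simplification you sketch (combining $\delta/\sqrt{2\pi}$ with $(\lambda+\gamma^2/2)^{1/2}$ and distributing the remaining power into the product) is precisely what is needed.
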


\section{Multivariate GCP with Bern\v{s}tein intertimes}
Kataria and Khandakar (2022b) studied a time-changed variant of the GCP in which the random time-change component is a L\'evy subordinator associated with some Bern\v{s}tein function. It is called the time-changed generalized counting process-I (TCGCP-I). For more details on TCGCP-I, we refer the reader to Kataria \textit{et al.} (2022).
Here, we introduce a multivariate version of it which is defined as follows:
\begin{equation*}
\bar{\mathcal{Z}}^f(t)=\bar{M}(D_f(t)):=(M_1(D_f(t)),M_2(D_f(t)),\dots,M_q(D_f(t))),
\end{equation*}
where the MGCP $\{\bar{M}(t)\}_{t\ge0}$ is independent of $\{D_f(t)\}_{t\geq0}$, a L\'evy subordinator associated with Bern\v{s}tein function $f$.
\begin{remark}
As a consequence of Remark $\ref{remk}$, it can be observed that the component processes $\{M_i(D_f(t))\}_{t\geq0}$ of $\{\bar{\mathcal{Z}}^f(t)\}_{t\geq0}$ are conditionally independent given $\{D_f(t)\}_{t\geq0}$. 
\end{remark}

In an infinitesimal time interval of length $h$ such that $o(h)/h\to0$ as $h\to0$, the transition probabilities of $\{\bar{\mathcal{Z}}^f(t)\}_{t\geq0}$ are given by
{\footnotesize\begin{equation}\label{mgcptrns}
\mathrm{Pr}\{\bar{\mathcal{Z}}^f(t+h)=\bar{n}+\bar{l}|\bar{\mathcal{Z}}^f(t)=\bar{n}\}=\begin{cases}
h\displaystyle\sum_{\substack{\Omega(k_i,l_i)\\i=1,2,\dots,q}}\bigg(\prod_{i=1}^{q}\prod_{j_i=1}^{k_i}\frac{\lambda_{ij_i}^{l_{ij_i}}}{l_{ij_i}!}\bigg)\int_{0}^{\infty}e^{-\lambda r}r^{\sum_{i=1}^{q}\sum_{j_i=1}^{k_i}l_{ij_i}}\mu_{D_f}(\mathrm{d}r)+o(h),\ \bar{l}\succ\bar{0},\vspace{.2cm}\\
1-h\int_{0}^{\infty}\left(1-e^{-\lambda r}\right)\mu_{D_f}(\mathrm{d}r)+o(h),\ \bar{l}=\bar{0},
\end{cases}
\end{equation}}
%\begin{cases}
%h\int_{0}^{\infty}\sum_{\substack{\Omega(k_i,n_i)\\i=1,2,\dots,q}}(\prod_{i=1}^{q}\prod_{j_i=1}^{k_i}\frac{(\lambda_{ij_i}r)^{n_{ij_i}}}{n_{ij_i}!}e^{-\lambda_{ij_i}r})\mu(\mathrm{d}r)+o(h),\\bar{n}\succ\bar{0},\vspace{.2cm}\\
%1-h\int_{0}^{\infty}\left(1-\prod_{i=1}^{q}e^{-\lambda_ir}\right)\mu(\mathrm{d}r)+o(h),\ \bar{n}=\bar{0},
%\end{cases}
where $\Omega (k_i,l_i)=\{(l_{i1},l_{i2},\dots,l_{ik_i}):\sum_{j_i=1}^{k_i}j_il_{ij_i}=l_i,\  l_{ij_i}\in \mathbb{N}_{0}\}$,  $\lambda=\sum_{i=1}^{q}\sum_{j_i=1}^{k_i}\lambda_{ij_i}$ and $\mu_{D_f}(\cdot)$ is the L\'evy measure associated with Bern\v{s}tein function $f$. 

Note that the time-changed processes studied in Section \ref{secc1} are particular cases of $\{\bar{\mathcal{Z}}^f(t)\}_{t\geq0}$. It can be observed as follows:
\paragraph{Case I}
If we take 
$\mu_{D_f}(\mathrm{d}r)=\alpha r^{-\alpha-1} \mathrm{d}r/\Gamma(1-\alpha)$, $0<\alpha<1$, that is, the L\'evy measure associated with an $\alpha$-stable subordinator then \eqref{mgcptrns} reduces to the transition probabilities of MGSFCP given in \eqref{transprbstable}.

\paragraph{Case II}
If we take $\mu_{D_f}(\mathrm{d}r)=\alpha r^{-\alpha-1}e^{-\theta r}\mathrm{d}r/\Gamma(1-\alpha)$, $\theta>0,\, 0<\alpha<1$ in \eqref{mgcptrns}, that is, the L\'evy measure associated with a tempered $\alpha$-stable subordinator then \eqref{mgcptrns} reduces to the transition probabilities of multivariate tempered space fractional generalized counting process given in \eqref{temptrans}.

\paragraph{Case III}
If we take $\mu_{D_f}(\mathrm{d}r)=be^{-ar}\, \mathrm{d}r/r$, $a,b>0$, that is, the L\'evy measure associated with a gamma subordinator then \eqref{mgcptrns} reduces to the transition probabilities of MGCP time-changed by gamma subordinator given in \eqref{transgamma}.
\paragraph{Case IV}If we take $\mu_{D_f}(\mathrm{d}r)=\delta e^{-\gamma^2r/2}$ $\mathrm{d}r/\sqrt{2\pi r^3}$, $\delta>0, \gamma>0$, that is, the L\'evy measure associated with inverse Gaussian  subordinator then \eqref{mgcptrns} reduces to the transition probabilities of MGCP time-changed by an inverse Gaussian subordinator given in \eqref{transinvgauss}.

 We will use the following notations: Let
\begin{equation}\label{flambdau}
g(\lambda;\bar{u})=\int_{0}^{\infty}\bigg(1-e^{-\lambda r}\sum_{\bar{l}\ge\bar{0}}\sum_{\substack{\Omega(k_i,l_i)\\i=1,2,\dots,q}}\prod_{i=1}^{q}u_i^{l_i}\prod_{j_i=1}^{k_i}\frac{(\lambda_{ij_i}r)^{l_{ij_i}}}{l_{ij_i}!}\bigg)\mu_{D_f}(\mathrm{d}r)
\end{equation}
 and
\begin{equation}\label{flambda0}
g(\lambda;\bar{0})=\int_{0}^{\infty}(1-e^{-\lambda r})\mu_{D_f}(\mathrm{d}r)=f(\lambda).
\end{equation}
%{\color{blue}Observe that, for $q=1$
%\begin{align*}
%	g(\lambda_1;u_1)&=\int_{0}^{\infty}\bigg(1-e^{-\lambda_1 r}\sum_{l_1\ge0}\sum_{\Omega(k_1,l_1)}u_1^{l_1}\prod_{j_1=1}^{k_1}\frac{(\lambda_{1j_1}r)^{l_{1j_1}}}{l_{1j_1}!}\bigg)\mu_{D_f}(\mathrm{d}r)\\
%	&=\int_{0}^{\infty}\bigg(1-e^{-\lambda_1 r}\sum_{l_1\ge0}\sum_{l_{11}+l_{12}+\dots+l_{1k_1}=l_1}\prod_{j_1=1}^{k_1}\frac{(\lambda_{1j_1}ru_1^{j_1})^{l_{1j_1}}}{l_{1j_1}!}\bigg)\mu_{D_f}(\mathrm{d}r)\\
%	&=\int_{0}^{\infty}\bigg(1-e^{-\lambda_1 r}\sum_{l_1\ge0}\frac{\Big(\sum_{j_1=1}^{k_1}\lambda_{1j_1}ru_1^{j_1}\Big)^{l_1}}{l_1!}\bigg)\mu_{D_f}(\mathrm{d}r)=f\Big(\sum_{J_1=1}^{k_1}\lambda_{1j_1}(1-u_1^{j_1})\Big).
%\end{align*}}
% Next, we obtain the system of differential equations that governs the state probabilities of $\{\bar{\mathcal{Z}}^f(t)\}_{t\geq0}$.
\begin{proposition}
The state probabilities $p^f_{\bar{\mathcal{Z}}}(\bar{n},t)=\mathrm{Pr}\{\bar{\mathcal{Z}}^f(t)=\bar{n}\}$, $\bar{n}\geq\bar{0}$ satisfy the following system of differential equations:
\begin{equation*}
	\frac{\mathrm{d}}{\mathrm{d}t}p^f_{\bar{\mathcal{Z}}}(\bar{n},t)=-g(\lambda;\bar{B})p^f_{\bar{\mathcal{Z}}}(\bar{n},t),\,\, p^f_{\bar{\mathcal{Z}}}(\bar{n},0)=\mathbb{I}_{\{\bar{n}=\bar{0}\}},
\end{equation*}
 where  $\lambda=\sum_{i=1}^{q}\sum_{j_i=1}^{k_i}\lambda_{ij_i}$ and $\bar{B}=(B_1,B_2,\dots,B_q)$ is a $q$-tuple backward shift operator vector such that $B_i^{r}p^f_{\bar{\mathcal{Z}}}(\bar{n},t)=p^f_{\bar{\mathcal{Z}}}((n_1,n_2,\dots$, $n_{i-1},n_i-r,n_{i+1},\dots,n_q),t)$.
\end{proposition}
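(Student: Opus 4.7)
The plan is to derive the forward Kolmogorov equation for $p^f_{\bar{\mathcal{Z}}}(\bar{n},t)$ directly from the one-step transition probabilities \eqref{mgcptrns}, and then recognize the resulting right-hand side as the action of the operator $-g(\lambda;\bar{B})$ on $p^f_{\bar{\mathcal{Z}}}(\bar{n},t)$ using the definition \eqref{flambdau}. Since the component processes of $\{\bar{\mathcal{Z}}^f(t)\}_{t\geq0}$ are obtained by time-changing the L\'evy process $\{\bar{M}(t)\}_{t\geq0}$ by the independent subordinator $\{D_f(t)\}_{t\geq0}$, the increments of $\{\bar{\mathcal{Z}}^f(t)\}_{t\geq0}$ are stationary and independent, so the Chapman-Kolmogorov decomposition is justified.

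First, I would write
\begin{align*}
p^f_{\bar{\mathcal{Z}}}(\bar{n},t+h) &= p^f_{\bar{\mathcal{Z}}}(\bar{n},t)\,\mathrm{Pr}\{\bar{\mathcal{Z}}^f(t+h)=\bar{n}\mid\bar{\mathcal{Z}}^f(t)=\bar{n}\}\\
&\quad+\sum_{\bar{l}\succ\bar{0}}p^f_{\bar{\mathcal{Z}}}(\bar{n}-\bar{l},t)\,\mathrm{Pr}\{\bar{\mathcal{Z}}^f(t+h)=\bar{n}\mid\bar{\mathcal{Z}}^f(t)=\bar{n}-\bar{l}\}+o(h).
\end{align*}
Substituting \eqref{mgcptrns}, dividing by $h$, and passing to the limit $h\to0$ yields
\begin{align*}
\frac{\mathrm{d}}{\mathrm{d}t}p^f_{\bar{\mathcal{Z}}}(\bar{n},t)&=-f(\lambda)p^f_{\bar{\mathcal{Z}}}(\bar{n},t)\\
&\quad+\sum_{\bar{l}\succ\bar{0}}p^f_{\bar{\mathcal{Z}}}(\bar{n}-\bar{l},t)\sum_{\substack{\Omega(k_i,l_i)\\i=1,\dots,q}}\bigg(\prod_{i=1}^{q}\prod_{j_i=1}^{k_i}\frac{\lambda_{ij_i}^{l_{ij_i}}}{l_{ij_i}!}\bigg)\int_{0}^{\infty}e^{-\lambda r}r^{\sum_{i,j_i}l_{ij_i}}\mu_{D_f}(\mathrm{d}r),
\end{align*}
where I have used \eqref{flambda0} to identify the $\bar{l}=\bar{0}$ contribution with $f(\lambda)$.

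Next, I would expand $g(\lambda;\bar{B})p^f_{\bar{\mathcal{Z}}}(\bar{n},t)$ by substituting the operator vector $\bar{B}$ into \eqref{flambdau}. Since $\prod_{i=1}^{q}B_i^{l_i}p^f_{\bar{\mathcal{Z}}}(\bar{n},t)=p^f_{\bar{\mathcal{Z}}}(\bar{n}-\bar{l},t)$, splitting the inner sum into the $\bar{l}=\bar{0}$ term (which collapses to $1$) and the $\bar{l}\succ\bar{0}$ terms gives
\begin{align*}
g(\lambda;\bar{B})p^f_{\bar{\mathcal{Z}}}(\bar{n},t)&=f(\lambda)p^f_{\bar{\mathcal{Z}}}(\bar{n},t)\\
&\quad-\sum_{\bar{l}\succ\bar{0}}p^f_{\bar{\mathcal{Z}}}(\bar{n}-\bar{l},t)\sum_{\substack{\Omega(k_i,l_i)\\i=1,\dots,q}}\bigg(\prod_{i=1}^{q}\prod_{j_i=1}^{k_i}\frac{\lambda_{ij_i}^{l_{ij_i}}}{l_{ij_i}!}\bigg)\int_{0}^{\infty}e^{-\lambda r}r^{\sum_{i,j_i}l_{ij_i}}\mu_{D_f}(\mathrm{d}r).
\end{align*}
Comparing this with the forward equation above establishes $\frac{\mathrm{d}}{\mathrm{d}t}p^f_{\bar{\mathcal{Z}}}(\bar{n},t)=-g(\lambda;\bar{B})p^f_{\bar{\mathcal{Z}}}(\bar{n},t)$; the initial condition $p^f_{\bar{\mathcal{Z}}}(\bar{n},0)=\mathbb{I}_{\{\bar{n}=\bar{0}\}}$ is immediate because $D_f(0)=0$ and $\bar{M}(0)=\bar{0}$.

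The main obstacle is justifying the interchange of summation and integration when expanding $g(\lambda;\bar{B})p^f_{\bar{\mathcal{Z}}}(\bar{n},t)$: the factor $\sum_{\bar{l}\ge\bar{0}}e^{-\lambda r}\prod_{i,j_i}(\lambda_{ij_i}r)^{l_{ij_i}}/l_{ij_i}!$ must be shown to sum to $1$ for each $r>0$ so that the $\bar{l}=\bar{0}$ decomposition reproducing $f(\lambda)$ is meaningful (this follows because the probabilities $p(\bar{l},r)$ of the MGCP given in \eqref{jopmf} sum to $1$ in $\bar{l}$, and the series is dominated by $\mu_{D_f}$-integrable quantities away from the $\bar{0}$-term). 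Once this bookkeeping is handled via Fubini, the identification of the two sides is purely term-by-term.
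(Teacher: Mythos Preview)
Your proposal is correct and follows essentially the same route as the paper: derive the forward Kolmogorov equation from the transition probabilities \eqref{mgcptrns} via the Chapman--Kolmogorov decomposition (the paper phrases this through independent stationary increments and a change of summation index, but the content is identical), then identify the right-hand side with $-g(\lambda;\bar{B})p^f_{\bar{\mathcal{Z}}}(\bar{n},t)$ by splitting off the $\bar{l}=\bar{0}$ term in \eqref{flambdau}. Your explicit remark about the Fubini justification (using that the MGCP probabilities in \eqref{jopmf} sum to $1$) is a point the paper leaves implicit.
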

\begin{proof}
As $\{\bar{\mathcal{Z}}^f(t)\}_{t\geq0}$ has independent and stationary increments, we have
\begin{align}
p^f_{\bar{\mathcal{Z}}}(\bar{n},t+h)&=\sum_{\bar{0}\le \bar{l}\leq \bar{n}}\mathrm{Pr}(\bar{\mathcal{Z}}^f(t)=\bar{l},\bar{\mathcal{Z}}^f(t+h)-\bar{\mathcal{Z}}^f(t)=\bar{n}-\bar{l})\nonumber\\
&=\sum_{\bar{0}\le \bar{l}\prec \bar{n}}p^f_{\bar{\mathcal{Z}}}(\bar{l},t)p^f_{\bar{\mathcal{Z}}}(\bar{n}-\bar{l},h)+p^f_{\bar{\mathcal{Z}}}(\bar{n},t)p^f_{\bar{\mathcal{Z}}}(\bar{0},h).\label{berpmf}
\end{align}
 On using \eqref{mgcptrns} and changing the summation indices in \eqref{berpmf}, we get
{\small\begin{align*}
p^f_{\bar{\mathcal{Z}}}(\bar{n},t+h)-p^f_{\bar{\mathcal{Z}}}(\bar{n},t)&=\sum_{\bar{0}\prec \bar{l}\leq \bar{n}}p^f_{\bar{\mathcal{Z}}}(\bar{n}-\bar{l},t)p^f_{\bar{\mathcal{Z}}}(\bar{l},h)-hp^f_{\bar{\mathcal{Z}}}(\bar{n},t)\int_{0}^{\infty}(1-e^{-\lambda r})\mu_{D_f}(\mathrm{d}r)+o(h)\\
&=h\sum_{\bar{0}\prec \bar{l}\leq \bar{n}}p^f_{\bar{\mathcal{Z}}}(\bar{n}-\bar{l},t)
\sum_{\substack{\Omega(k_i,l_i)\\i=1,2,\dots,q}}\bigg(\prod_{i=1}^{q}\prod_{j_i=1}^{k_i}\frac{\lambda_{ij_i}^{l_{ij_i}}}{l_{ij_i}!}\bigg)\int_{0}^{\infty}e^{-\lambda r}r^{\sum_{i=1}^{q}\sum_{j_i=1}^{k_i}l_{ij_i}}
\mu_{D_f}(\mathrm{d}r)\\
&\,\,\,\, -hp^f_{\bar{\mathcal{Z}}}(\bar{n},t)f(\lambda)+o(h),
\end{align*}}
where $\Omega (k_i,l_i)=\{(l_{i1},l_{i2},\dots,l_{ik_i}):\sum_{j_i=1}^{k_i}j_il_{ij_i}=l_i,\  l_{ij_i}\in \mathbb{N}_{0}\}$, $\lambda=\sum_{i=1}^{q}\sum_{j_i=1}^{k_i}\lambda_{ij_i}$ and in the the last step, we have used \eqref{flambda0}. 

Now, on taking the limit $h\to0$, we get
{\small\begin{align}
	\frac{\mathrm{d}}{\mathrm{d}t}p^f_{\bar{\mathcal{Z}}}(\bar{n},t)&=-f(\lambda)p^f_{\bar{\mathcal{Z}}}(\bar{n},t)+\sum_{\bar{0}\prec \bar{l}\leq \bar{n}}p^f_{\bar{\mathcal{Z}}}(\bar{n}-\bar{l},t)\sum_{\substack{\Omega(k_i,l_i)\\i=1,2,\dots,q}}\bigg(\prod_{i=1}^{q}\prod_{j_i=1}^{k_i}\frac{\lambda_{ij_i}^{l_{ij_i}}}{l_{ij_i}!}\bigg)\int_{0}^{\infty}e^{-\lambda r}r^{\sum_{i=1}^{q}\sum_{j_i=1}^{k_i}l_{ij_i}}\mu_{D_f}(\mathrm{d}r)\label{mgfcpde}\\
	&=p^f_{\bar{\mathcal{Z}}}(\bar{n},t)\Big(\int_{0}^{\infty}(1-e^{-\lambda r})\mu_{D_f}(\mathrm{d}r)\nonumber\\
	&\hspace{3cm} -\sum_{\bar{l}\succ\bar{0}}\Big(\prod_{i=1}^{q}B_i^{l_i}\Big)\sum_{\substack{\Omega(k_i,l_i)\\i=1,2,\dots,q}}\bigg(\prod_{i=1}^{q}\prod_{j_i=1}^{k_i}\frac{\lambda_{ij_i}^{l_{ij_i}}}{l_{ij_i}!}\bigg)\int_{0}^{\infty}e^{-\lambda r}r^{\sum_{i=1}^{q}\sum_{j_i=1}^{k_i}l_{ij_i}}\mu_{D_f}(\mathrm{d}r)\Big)\nonumber\\
	&= \int_{0}^{\infty}\bigg(1-e^{-\lambda r}\sum_{\bar{l}\ge\bar{0}}\sum_{\substack{\Omega(k_i,l_i)\\i=1,2,\dots,q}}\prod_{i=1}^{q}B_i^{l_i}\prod_{j_i=1}^{k_i}\frac{(\lambda_{ij_i}r)^{l_{ij_i}}}{l_{ij_i}!}\bigg)\mu_{D_f}(\mathrm{d}r)p^f_{\bar{\mathcal{Z}}}(\bar{n},t)
	=-g(\lambda;\bar{B})p^f_{\bar{\mathcal{Z}}}(\bar{n},t).\nonumber
\end{align}}
This proves the result.
\end{proof}

 Next, we obtain the differential equation that governs the pgf 
\begin{equation*}
G^f_{\bar{\mathcal{Z}}}(\bar{u},t)=\mathbb{E}\bigg(u_1^{M_1(D_f(t))}u_2^{M_2(D_f(t))}\dots u_q^{M_q(D_f(t))}\bigg)
	=\sum_{\bar{n}\ge\bar{0}}u_1^{n_1} u_2^{n_2}\dots u_q^{n_q}p^f_{\bar{\mathcal{Z}}}(\bar{n},t)
\end{equation*}
of multivariate TCGCP-I.

\begin{proposition}
The pgf $G^f_{\bar{\mathcal{Z}}}(\bar{u},t)$, $|u_i|\leq1$ solves the following differential equation:
\begin{equation*}
\frac{\partial}{\partial t}G^f_{\bar{\mathcal{Z}}}(\bar{u},t)=-g(\lambda;\bar{u})G^f_{\bar{\mathcal{Z}}}(\bar{u},t),\,\, G^f_{\bar{\mathcal{Z}}}(\bar{u},0)=1,
\end{equation*}
where $g(\lambda,\bar{u})$ is given in \eqref{flambdau}.
\end{proposition}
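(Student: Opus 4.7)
The plan is to pass through the subordination representation $\bar{\mathcal{Z}}^f(t)=\bar{M}(D_f(t))$, exploiting the independence of the MGCP and the subordinator. By conditioning on $D_f(t)$ and invoking the explicit pgf \eqref{pgffmgcp} of the MGCP,
$$G^f_{\bar{\mathcal{Z}}}(\bar{u},t)=\mathbb{E}\bigl(\exp(-\eta(\bar{u})\,D_f(t))\bigr)=\exp(-t\,f(\eta(\bar{u}))),$$
where $\eta(\bar{u}):=\sum_{i=1}^{q}\sum_{j_i=1}^{k_i}\lambda_{ij_i}(1-u_i^{j_i})$ and the last equality is the Laplace exponent of the Bern\v{s}tein subordinator. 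Differentiating in $t$ immediately yields $\partial_t G^f_{\bar{\mathcal{Z}}}=-f(\eta(\bar{u}))\,G^f_{\bar{\mathcal{Z}}}$, while the initial condition $G^f_{\bar{\mathcal{Z}}}(\bar{u},0)=1$ follows from $D_f(0)=0$.

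What remains is to identify $f(\eta(\bar{u}))$ with $g(\lambda;\bar{u})$ from \eqref{flambdau}. I would interchange the order of summation inside the definition of $g$: summing over $l_i\ge 0$ and then over the compositions $\Omega(k_i,l_i)$ collapses to a free sum over $(l_{i1},\dots,l_{ik_i})\in\mathbb{N}_0^{k_i}$, and the factor $u_i^{l_i}=u_i^{\sum_{j_i}j_i l_{ij_i}}$ splits multiplicatively across the $(i,j_i)$ indices. This gives
$$\sum_{\bar{l}\ge\bar{0}}\sum_{\substack{\Omega(k_i,l_i)\\i=1,\dots,q}}\prod_{i=1}^{q}u_i^{l_i}\prod_{j_i=1}^{k_i}\frac{(\lambda_{ij_i}r)^{l_{ij_i}}}{l_{ij_i}!}=\prod_{i=1}^{q}\prod_{j_i=1}^{k_i}e^{\lambda_{ij_i}r u_i^{j_i}}=\exp\Bigl(r\sum_{i,j_i}\lambda_{ij_i}u_i^{j_i}\Bigr).$$
Since $\lambda=\sum_{i,j_i}\lambda_{ij_i}$, substituting this back into \eqref{flambdau} and using \eqref{flambda0} yields $g(\lambda;\bar{u})=\int_{0}^{\infty}(1-e^{-r\eta(\bar{u})})\,\mu_{D_f}(\mathrm{d}r)=f(\eta(\bar{u}))$, as desired.

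An alternative, equivalent route is to multiply the governing ODE for $p^f_{\bar{\mathcal{Z}}}(\bar{n},t)$ from the preceding proposition by $\prod_i u_i^{n_i}$ and sum over $\bar{n}\ge\bar{0}$: each backward shift $B_i^{l_i}$ applied to $p^f_{\bar{\mathcal{Z}}}$ translates, after the summation, to multiplication by $u_i^{l_i}$, so the operator $g(\lambda;\bar{B})$ is converted to the scalar $g(\lambda;\bar{u})$ on the pgf side. I do not anticipate any genuine obstacle; the only slightly non-routine step is the combinatorial identification $g(\lambda;\bar{u})=f(\eta(\bar{u}))$, which reduces as above to unwinding the composition sum and recognizing a product of exponential series.
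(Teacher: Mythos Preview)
Your proposal is correct. The paper takes precisely what you call the ``alternative route'': it multiplies the pmf ODE \eqref{mgfcpde} by $\prod_i u_i^{n_i}$, sums over $\bar{n}\ge\bar{0}$, interchanges the order of the sums over $\bar{n}$ and $\bar{l}$, and then recognizes $g(\lambda;\bar{u})$ directly from its defining expression \eqref{flambdau}. Your primary route via the Laplace exponent is more direct: conditioning on $D_f(t)$ and invoking \eqref{pgffmgcp} produces the closed form $G^f_{\bar{\mathcal{Z}}}(\bar{u},t)=\exp(-t\,f(\eta(\bar{u})))$ in one stroke, after which the combinatorial collapse of the composition sum identifies $f(\eta(\bar{u}))=g(\lambda;\bar{u})$. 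This buys you not only the ODE but its explicit solution, and it makes transparent that every time-changed variant in Section~\ref{secc1} is an instance of the single formula $G^f_{\bar{\mathcal{Z}}}=\exp(-t\,f(\eta(\bar{u})))$ under specialization of $f$. The paper's approach, by contrast, stays entirely within the generator framework (transition probabilities $\to$ pmf ODE $\to$ pgf ODE) and does not invoke the Laplace exponent of $D_f$ separately. One minor caveat: your identity $\mathbb{E}(e^{-sD_f(t)})=e^{-tf(s)}$ presumes the driftless case $b=0$ in $\phi(s)=bs+f(s)$; this is consistent with the paper, whose transition probabilities \eqref{mgcptrns} and identity \eqref{flambda0} are likewise written for driftless $D_f$.
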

\begin{proof}
On multiplying both sides of \eqref{mgfcpde} with $u_1^{n_1}u_2^{n_2}\dots u_q^{n_q}$ and summing over the range of $\bar{n}\geq\bar{0}$, we have 
{\scriptsize\begin{align*}
\frac{\partial}{\partial t}G^f_{\bar{\mathcal{Z}}}(\bar{u},t)
&=-f(\lambda)G^f_{\bar{\mathcal{Z}}}(\bar{u},t)+\sum_{\bar{n}\geq\bar{0}}\Big(\prod_{i=1}^{q}u_i^{n_i}\Big)\sum_{\bar{0}\prec\bar{l}\leq \bar{n}}p^f_{\bar{\mathcal{Z}}}(\bar{n}-\bar{l},t)\sum_{\substack{\Omega(k_i,l_i)\\i=1,2,\dots,q}}\bigg(\prod_{i=1}^{q}\prod_{j_i=1}^{k_i}\frac{\lambda_{ij_i}^{l_{ij_i}}}{l_{ij_i}!}\bigg)\int_{0}^{\infty}e^{-\lambda r}r^{\sum_{i=1}^{q}\sum_{j_i=1}^{k_i}l_{ij_i}}\mu_{D_f}(\mathrm{d}r)\\
&=-f(\lambda)G^f_{\bar{\mathcal{Z}}}(\bar{u},t)+\sum_{\bar{l}\succ\bar{0}}\sum_{\bar{n}\geq \bar{l}}\Big(\prod_{i=1}^{q}u_i^{n_i-l_i}\Big)p^f_{\bar{\mathcal{Z}}}(\bar{n}-\bar{l},t)\sum_{\substack{\Omega(k_i,l_i)\\i=1,2,\dots,q}}\bigg(\prod_{i=1}^{q}\prod_{j_i=1}^{k_i}\frac{\lambda_{ij_i}^{l_{ij_i}}}{l_{ij_i}!}\bigg)\int_{0}^{\infty}e^{-\lambda r}r^{\sum_{i=1}^{q}\sum_{j_i=1}^{k_i}l_{ij_i}}\mu_{D_f}(\mathrm{d}r)\\
&=\bigg(-f(\lambda)+\sum_{\bar{l}\succ\bar{0}}\sum_{\substack{\Omega(k_i,l_i)\\i=1,2,\dots,q}}\bigg(\prod_{i=1}^{q}\prod_{j_i=1}^{k_i}\frac{\lambda_{ij_i}^{l_{ij_i}}}{l_{ij_i}!}\bigg)\int_{0}^{\infty}e^{-\lambda r}r^{\sum_{i=1}^{q}\sum_{j_i=1}^{k_i}l_{ij_i}}\mu_{D_f}(\mathrm{d}r)\bigg)G^f_{\bar{\mathcal{Z}}}(\bar{u},t)\\
%&=-\bigg(\int_{0}^{\infty}(1-e^{-\lambda r})\mu(\mathrm{d}r)-\sum_{\bar{l}\succ\bar{0}}\int_{0}^{\infty}e^{-\lambda r}\prod_{i=1}^{q}\sum_{\Omega(k_i,l_i)}\prod_{j_i=1}^{k_i}\frac{(\lambda_{ij_i}r)^{l_{ij_i}}}{l_{ij_i}!}u_i^{l_i}\mu_{D_f}(\mathrm{d}r)\bigg)G^f(\bar{u},t)\\
&=-\bigg(\int_{0}^{\infty}\bigg(1-e^{-\lambda r}\sum_{\bar{l}\ge\bar{0}}\sum_{\substack{\Omega(k_i,l_i)\\i=1,2,\dots,q}}\prod_{i=1}^{q}u_i^{l_i}\prod_{j_i=1}^{k_i}\frac{(\lambda_{ij_i}r)^{l_{ij_i}}}{l_{ij_i}!}\bigg)\mu_{D_f}(\mathrm{d}r)\bigg)G^f_{\bar{\mathcal{Z}}}(\bar{u},t)=-g(\lambda;\bar{u})G^f_{\bar{\mathcal{Z}}}(\bar{u},t),
\end{align*}}
where we have used \eqref{flambdau} in the last step.
As $p^f_{\bar{\mathcal{Z}}}(\bar{n},0)=\mathbb{I}_{\{\bar{n}=\bar{0}\}}$, we have
\begin{equation*} G^f_{\bar{\mathcal{Z}}}(\bar{u},0)=\sum_{\bar{n}\ge\bar{0}}u_1^{n_1}u_2^{n_2}\dots u_q^{n_q}p^f_{\bar{\mathcal{Z}}}(\bar{n},0)=1.
\end{equation*}
This completes the proof.
\end{proof}
\begin{remark}
For $\beta\in(0,1)$, we consider the following time-changed variant of $\{\bar{\mathcal{Z}}^f(t)\}_{t\ge0}$:
\begin{equation*}
\bar{\mathcal{Z}}^{f,\beta}(t):=\bar{\mathcal{Z}}^{f}(Y_\beta(t)),
\end{equation*}
where $\{Y_\beta(t)\}_{t\geq0}$ is the first passage time of a $\beta$-stable subordinator $\{D_\beta(t)\}_{t\geq0}$. 

On using the same argument as given in Remark 2.3 of Orsingher and Toaldo (2015), it can be shown that the state probabilities $p^{f,\beta}_{\bar{\mathcal{Z}}}(\bar{n},t)$, $\bar{n}\ge\bar{0}$ satisfy the following system of fractional differential equations:
{\footnotesize\begin{equation*}
	\frac{\mathrm{d}^\beta}{\mathrm{d}t^\beta}p^{f,\beta}_{\bar{\mathcal{Z}}}(\bar{n},t)=-f(\lambda)p^{f,\beta}_{\bar{\mathcal{Z}}}(\bar{n},t)+\sum_{\bar{0}\prec \bar{l}\le \bar{n}}p^{f,\beta}_{\bar{\mathcal{Z}}}(\bar{n}-\bar{l},t)\sum_{\substack{\Omega(k_i,l_i)\\i=1,2,\dots,q}}\bigg(\prod_{i=1}^{q}\prod_{j_i=1}^{k_i}\frac{\lambda_{ij_i}^{l_{ij_i}}}{l_{ij_i}!}\bigg)\int_{0}^{\infty}e^{-\lambda r}r^{\sum_{i=1}^{q}\sum_{j_i=1}^{k_i}l_{ij_i}})\mu_{D_f}(\mathrm{d}r),
\end{equation*}}
with initial condition $p^{f,\beta}_{\bar{\mathcal{Z}}}(\bar{n},0)=\mathbb{I}_{\{\bar{n}=\bar{0}\}}$. Here, $\frac{\partial^\beta}{\partial t^\beta}$is the Caputo fractional derivative given in \eqref{caputo}.
Equivalently,
\begin{equation*}
\frac{\mathrm{d}^\beta}{\mathrm{d}t^\beta}p^{f,\beta}_{\bar{\mathcal{Z}}}(\bar{n},t)=-g(\lambda;\bar{B})p^{f,\beta}_{\bar{\mathcal{Z}}}(\bar{n},t),\ \ p^{f,\beta}_{\bar{\mathcal{Z}}}(\bar{n},0)=\mathbb{I}_{\{\bar{n}=\bar{0}\}}.
\end{equation*}
Moreover, its pgf $G^{f,\beta}_{\bar{\mathcal{Z}}}(\bar{u},t)$ solves the following equation:
\begin{equation*}
\frac{\partial^\beta}{\partial t^\beta}G^{f,\beta}_{\bar{\mathcal{Z}}}(\bar{u},t)=-g(\lambda;\bar{u})G^{f,\beta}_{\bar{\mathcal{Z}}}(\bar{u},t),\ \ G^{f,\beta}_{\bar{\mathcal{Z}}}(\bar{u},0)=1.
\end{equation*}

Thus, 
$
G^{f,\beta}_{\bar{\mathcal{Z}}}(\bar{u},t)=E_{\beta,1}(-t^\beta g(\lambda;\bar{u}))
$
which follows from the fact that the Mittag-Leffeler function is an eigenfunction of the Caputo fractional derivative. 
\end{remark}
\section{Application of MGSFCP to shock models}
In this section, we discuss an application of the MGSFCP $\{\bar{M}^\alpha(t)\}_{t\ge0}$ to a shock model which is subject to $q$-types of shocks. Let $\mathcal{Q}(t)$ denote the total number of shocks in $[0,t]$. Then,
\begin{equation*}
\mathcal{Q}(t)=\sum_{i=1}^{q}M_i^\alpha(t),
\end{equation*}
where for each $i=1,2, \dots,q$, $M_i^\alpha(t)$ models the number of shocks of type $i$ in $[0,t]$. The failure of the system due to shock of type $i$ is denoted by $\sigma=i$, $i=1,2, \dots,q$. 
Let $\mathcal{N}$ be a random threshold which takes values in $\mathbb{N}$ and we assume that the system fails when $\mathcal{Q}(t)\ge\mathcal{N}$. The probability distribution and the reliability
function of $\mathcal{N}$ are given by
\begin{align}
\mathrm{Pr}\{\mathcal{N}=n\}&=p_n,\,\, n\in\mathbb{N},\label{p_n}\\
\mathrm{Pr}\{\mathcal{N}>n\}&=q_n,\,\, n\in\mathbb{N}_0,\nonumber
\end{align}
respectively.

We consider a
non-negative absolutely continuous random variable $T$ which represents the random failure time of the system. It is defined as
\begin{equation*}
	T\coloneqq\inf\{t\ge0:\mathcal{Q}(t)\ge\mathcal{N}\}
\end{equation*}
which is first hitting time of $\{\mathcal{Q}(t)\}_{t\ge0}$. 
For $i=1,2,\dots,q$, its pdf $h_T(t)$ is given by
\begin{equation*}
h_T(t)=\sum_{i=1}^{q}h_{i}(t),\ \ t\ge0,
\end{equation*} 
where the failure densities $h_{i}(t)$ are defined as
\begin{equation*}
h_{i}(t)=\frac{\mathrm{d}}{\mathrm{d}t}\mathrm{Pr}\{T\le t,\, \sigma=i\}, \, t\ge0.
\end{equation*}
Thus, the probability that failure occurs due to shock of type $i$ is given by
\begin{equation}\label{hzsigma}
\mathrm{Pr}\{\sigma=i\}=\int_{0}^{\infty}h_{i}(t)\, \mathrm{d}t.
\end{equation}
 For any  $i=1,2,\dots,q$ and $l_i=1,2,\dots,k_i$ and $\bar{n}\ge\bar{0}$, the hazard rate $R_{il_i}(\bar{n};t)$, $t\ge0$ represents the intensity of occurrence of shock of type $i$ due to a jump of size $l_i$ immediately after $t$. It is defined as
\begin{equation}\label{hzrates}
R_{il_i}(\bar{n};t)=\lim_{h\to0}\frac{1}{h}\mathrm{Pr}\Big(M_i^\alpha(t+h)=n_i+l_i,\,\cap_{\substack{r=1\\r\ne i}}^{q} \{M_r^\alpha(t+h)=n_r\}\Big|\cap_{r=1}^{q}\{M_r^\alpha(t)=n_r\}\Big).
%R_{2l_2}(n_1,n_2;t)&=\lim_{h\to0}\frac{1}{h}\mathrm{Pr}\{M_2^\alpha(t+h)=n_1, M_2^\alpha(t+h)=n_2+l_2|M_1^\alpha(t)=n_1,M_2^\alpha(t)=n_2\}.
\end{equation}
Now, conditional on $\mathcal{N}$, and on using \eqref{p_n} and \eqref{hzrates}, the failure densities can be written as
\begin{equation}\label{faidenex}
h_{i}(t)=\sum_{n=1}^{\infty}p_n\sum_{n-k_i\le n_1+n_2+\dots+n_q\le n-1}\mathrm{Pr}(\cap_{i=1}^{q}\{M_i^\alpha(t)=n_i\})\sum_{n-(n_1+n_2+\dots+n_q)}^{k_i}R_{il_i}(\bar{n};t),\ \ t\ge0.
\end{equation}

The survival function $L_{T}(t)=\mathrm{Pr}\{T>t\}$, $t\ge0$ of random failure time is given by
\begin{equation}\label{relfn}
L_{T}(t)=\sum_{n=0}^{\infty}q_n\sum_{n_1+n_2+\dots+n_q=n}p^\alpha(\bar{n},t),\, \, q_0=1.
\end{equation}

\begin{proposition}
For $i=1,2,\dots,q$, $l_i=1,2,\dots,k_i$ and $\bar{n}\in\mathbb{N}_0^q$, the hazard rate is given by
\begin{equation}\label{hzratex}
R_{il_i}(\bar{n};t)=\frac{C(\alpha,t)}{p^\alpha(\bar{n},t)}, \,\,t\ge0,
\end{equation}
where $p^\alpha(\bar{n},t)$ is the pmf of MGSFCP given in \eqref{alphawright} and
\begin{align*}
C(\alpha,t)&=\alpha \lambda^{\alpha-1}\lambda_{il_i}e^{-\lambda^\alpha t}\sum_{\substack{\Omega(k_r,n_r)\\r=1,2,\dots,q}}\Big(\prod_{r=1}^{q}\prod_{j_r=1}^{k_r}\frac{(-\lambda_{rj_r}/\lambda)^{n_{rj_r}}}{n_{rj_r}!}\Big) \sum_{x=0}^{z_{k_1}+z_{k_2}+\dots+z_{k_q}}\frac{(\lambda^\alpha t)^{x}}{x!}\\
&\hspace{7cm}\cdot\sum_{y=0}^{x}(-1)^y\binom{x}{y}(\alpha y)_{z_{k_1}+z_{k_2}+\dots+z_{k_q}}.
\end{align*}
Here, $z_{k_r}=\sum_{j_r=1}^{k_r}n_{rj_r}$, $\lambda=\sum_{r=1}^{q}\sum_{j_r=1}^{k_r}\lambda_{rj_r}$, $\Omega(k_r,n_r)=\{(n_{r1},n_{r2},\dots,n_{rk_r}):\sum_{j_r=1}^{k_r}j_rn_{rj_r}=n_r,\,\,n_{rj_r}\in\mathbb{N}_0\}$ and $(x)_k=x(x-1)\dots (x-k+1)$ denotes the falling factorial.
\end{proposition}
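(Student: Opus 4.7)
The plan is to begin from the Bayes decomposition
\begin{equation*}
R_{il_i}(\bar{n};t) = \frac{1}{p^\alpha(\bar{n},t)}\lim_{h\to 0^+}\frac{1}{h}\,\mathrm{Pr}\{\bar{M}^\alpha(t)=\bar{n},\,\bar{M}^\alpha(t+h)=\bar{n}+\bar{\epsilon}_i^{l_i}\},
\end{equation*}
so it suffices to identify the numerator-limit with the displayed $C(\alpha,t)$. First I would exploit the subordination $\bar{M}^\alpha(t)=\bar{M}(D_\alpha(t))$: conditioning on $(D_\alpha(t),D_\alpha(t+h)-D_\alpha(t))$ and invoking stationarity and independence of the stable subordinator's increments, together with the independent-increments property of $\{\bar{M}(t)\}_{t\ge 0}$ and the joint density \eqref{eqjoalpha}, the joint probability factors as $p^\alpha(\bar{n},t)\cdot p^\alpha(\bar{\epsilon}_i^{l_i},h)$. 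The task then reduces to computing $\lim_{h\to 0^+} p^\alpha(\bar{\epsilon}_i^{l_i},h)/h$ and multiplying the result back by $p^\alpha(\bar{n},t)$.

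Next I would extract the $h$-linear coefficient from the Theorem \ref{thmyy} series for $p^\alpha(\bar{\epsilon}_i^{l_i},h)$: the $r=0$ term drops out since $1/\Gamma(1-N)=0$ for integers $N\ge 1$, while the $r=1$ term contributes $-\lambda^\alpha\sum_{\Omega(k_i,l_i)}[\Gamma(\alpha+1)/\Gamma(\alpha-N+1)]\prod_{j_i}(-\lambda_{ij_i}/\lambda)^{x_{ij_i}}/x_{ij_i}!$, where $N=\sum_{j_i}x_{ij_i}$; isolating the distinguished summand with $x_{il_i}=1$ and $x_{ij_i}=0$ for $j_i\ne l_i$ yields the prefactor $\alpha\lambda^{\alpha-1}\lambda_{il_i}$. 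As a cross-check one can recover the same coefficient directly from the L\'evy measure \eqref{lmeasalpha} via the Mellin evaluation $\int_0^\infty u^{N-\alpha-1}e^{-\lambda u}\,\mathrm{d}u=\Gamma(N-\alpha)\lambda^{\alpha-N}$.

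The final step is to rewrite $\alpha\lambda^{\alpha-1}\lambda_{il_i}\cdot p^\alpha(\bar{n},t)$ in the advertised closed form. For this I would substitute Theorem \ref{thmyy}, recognize $\Gamma(\alpha x+1)/\Gamma(\alpha x-z+1)$ as the falling factorial $(\alpha x)_z$ with $z=z_{k_1}+\cdots+z_{k_q}$, and then invoke the Newton forward-difference identity
\begin{equation*}
\sum_{x\ge 0}\frac{(-\lambda^\alpha t)^x}{x!}(\alpha x)_z = e^{-\lambda^\alpha t}\sum_{x=0}^{z}\frac{(\lambda^\alpha t)^x}{x!}\sum_{y=0}^x(-1)^y\binom{x}{y}(\alpha y)_z,
\end{equation*}
which follows because $(\alpha y)_z$ is a polynomial of degree $z$ in $y$, so its finite differences $\Delta^x(\alpha y)_z|_{y=0}$ vanish for $x>z$; the stated identity then falls out after exchanging summation and using $\sum_{x\ge k}w^x/(x-k)!=w^k e^w$. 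I expect the main obstacle to be precisely this last identity: producing the $e^{-\lambda^\alpha t}$ prefactor and the finite truncation at $z$ from the infinite Mittag-Leffler-type series needs the polynomial finite-difference observation together with careful bookkeeping of the nested $\Omega(k_r,n_r)$-summation, so that the single-jump prefactor $\alpha\lambda^{\alpha-1}\lambda_{il_i}$ emerges cleanly rather than being absorbed into the multi-jump terms of $\Omega(k_i,l_i)$.
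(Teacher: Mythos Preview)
Your approach is essentially the paper's own: both write the hazard rate as the joint probability divided by $(h-t)\,p^\alpha(\bar n,t)$, condition on the subordinator, use the independent-increments factorisation \eqref{eqjoalpha} to split the joint probability into a ``$\bar n$ at time $t$'' factor and an ``increment $\bar\epsilon_i^{l_i}$ over $(t,h)$'' factor, and then extract the $h$-linear term from the latter.  The only cosmetic difference is that the paper obtains the finite-sum form of the $\bar n$-factor by quoting Lemma~3.1 of Di~Crescenzo and Meoli (2022) for $(-\partial_\lambda)^z e^{-\lambda^\alpha t}$, whereas you reach the same expression via the Newton forward-difference identity applied to the polynomial $y\mapsto(\alpha y)_z$; these are equivalent manipulations.

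Your worry about the multi-jump terms in $\Omega(k_i,l_i)$ is well-founded and is not something your identity will dissolve. The $(h-t)$-linear part of the increment factor is precisely the transition rate in \eqref{transprbstable}, namely
\[
-\lambda^\alpha\sum_{\Omega(k_i,l_i)}\frac{\Gamma(\alpha+1)}{\Gamma(\alpha-c_{k_i}+1)}\prod_{j_i=1}^{k_i}\frac{(-\lambda_{ij_i}/\lambda)^{x_{ij_i}}}{x_{ij_i}!},\qquad c_{k_i}=\sum_{j_i}x_{ij_i},
\]
which collapses to $\alpha\lambda^{\alpha-1}\lambda_{il_i}$ only when $l_i=1$ (or $k_i=1$). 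The paper's intermediate display \eqref{hznum} also carries this full $\Omega(k_i,l_i)$ sum; it is only in the final sentence ``on substituting \eqref{hznum} in \eqref{tttmt}, we get the required result'' that it is silently reduced to the single-jump summand. So your route would reproduce the paper's proof step for step, but neither approach literally yields the displayed $C(\alpha,t)$ for $l_i\ge 2$ without that unstated truncation to the distinguished term $x_{il_i}=1$, $x_{ij_i}=0$ otherwise.
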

\begin{proof}
For each $i\in\{1,2,\dots,q\}$ and $l_i\in\{1,2,\dots,k_i\}$, the hazard rates can be written as
\begin{equation}\label{tttmt}
R_{il_i}(\bar{n};t)=\lim_{h\to t}\frac{\mathrm{Pr}\Big(M_i^\alpha(h)=n_i+l_i, \,\cap_{\substack{r=1\\r\ne i}}^{q}\{M_r^\alpha(h)=n_r\},\, \cap_{r=1}^{q}\{M_r^\alpha(t)=n_r\}\Big) }{(h-t)p^\alpha(\bar{n},t)}.
\end{equation}
On using \eqref{eqjoalpha}, we have
\begin{align}\label{NUM1}
\mathrm{Pr}&\{M_i^\alpha(h)=n_i+l_i, \cap_{\substack{r=1\\r\ne i}}^{q}\{M_r^\alpha(h)=n_r\},\, \cap_{r=1}^{q}\{M_r^\alpha(t)=n_r\}\nonumber\\
&\ \ =\int_{0}^{\infty}\int_{0}^{s_2}\mathrm{Pr}\Big(M_i(s_2)=n_i+l_i, \cap_{\substack{r=1\\r\ne i}}^{q}\{M_r(s_2)=n_r\},\, \cap_{r=1}^{q}\{M_r(s_1)=n_r\}\Big)\nonumber\\
&\hspace{6cm}\cdot h_{D_\alpha(h-t)}(s_2-s_1,h-t)h_{D_\alpha(t)}(s_1,t)\mathrm{d}s_1\,\mathrm{d}s_2, 
\end{align}
where $h_{D_\alpha(t)}(\cdot,t)$ is the density function of $\alpha$-stable subordinator $\{D_\alpha(t)\}_{t\ge0}$.

Now, using the independence of $\{M_i(t)\}_{t\ge0}$, we have
{\small\begin{align}\label{jojoj}
\mathrm{Pr}\Big(M_i(s_2)=n_i+l_i, \cap_{\substack{r=1\\r\ne i}}^{q}\{&M_r(s_2)=n_r\},\, \cap_{r=1}^{q}\{M_r(s_1)=n_r\}\Big)\nonumber\\
&=\sum_{\Omega(k_i,l_i)}\Big(\prod_{j_i=1}^{k_i}\frac{(\lambda_{ij_i}(s_2-s_1))^{x_{ij_i}}}{x_{ij_i}!}\Big)\sum_{\substack{\Omega(k_r,n_r)\\r=1,2,\dots,q}}\Big(\prod_{r=1}^{q}\prod_{j_r=1}^{k_r}\frac{(\lambda_{rj_r}s_1)^{n_{rj_r}}}{n_{rj_r}!}\Big)e^{-\lambda s_2},
\end{align}}
where $\Omega(k_i,l_i)=\{(x_{i1},x_{i2},\dots,x_{ik_i}):\sum_{j_i=1}^{k_i}j_ix_{ij_i}=l_i,\, x_{ij_i}\in\mathbb{N}_0\}$.

On substituting \eqref{jojoj} in \eqref{NUM1} and changing the order of integration, we get
\begin{align*}
\mathrm{Pr}&\{M_i^\alpha(h)=n_i+l_i, \cap_{\substack{r=1\\r\ne i}}^{q}\{M_r^\alpha(h)=n_r\},\, \cap_{r=1}^{q}\{M_r^\alpha(t)=n_r\}\nonumber\\
\ \ &=\sum_{\Omega(k_i,l_i)}\Big(\prod_{j_i=1}^{k_i}\frac{\lambda_{ij_i}^{x_{ij_i}}}{x_{ij_i}!}\Big)\sum_{\substack{\Omega(k_r,n_r)\\r=1,2,\dots,q}}\Big(\prod_{r=1}^{q}\prod_{j_r=1}^{k_r}\frac{\lambda_{rj_r}^{n_{rj_r}}}{n_{rj_r}!}\Big)\nonumber\\ &\hspace{2cm}\cdot\int_{0}^{\infty}\int_{s_1}^{\infty}e^{-\lambda s_2}(s_2-s_1)^{c_{k_i}} s_1^{\sum_{r=1}^{q}z_{k_r}} h_{D_\alpha(h-t)}(s_2-s_1,h-t)h_{D_\alpha(t)}(s_1,t)\,\mathrm{d}s_2\,\mathrm{d}s_1\\
&=\sum_{\Omega(k_i,l_i)}\Big(\prod_{j_i=1}^{k_i}\frac{\lambda_{ij_i}^{x_{ij_i}}}{x_{ij_i}!}\Big)\sum_{\substack{\Omega(k_r,n_r)\\r=1,2,\dots,q}}\Big(\prod_{r=1}^{q}\prod_{j_r=1}^{k_r}\frac{\lambda_{rj_r}^{n_{rj_r}}}{n_{rj_r}!}\Big) \int_{0}^{\infty}e^{-\lambda s_1}s_1^{\sum_{r=1}^{q}z_{k_r}}h_{D_\alpha(t)}(s_1,t)\,\mathrm{d}s_1\\
&\hspace{8.5cm}\cdot\int_{0}^{\infty}e^{-\lambda s_2}s_2^{c_{k_i}}h_{D_\alpha(h-t)}(s_2,h-t)\,\mathrm{d}s_2\\
&=\sum_{\Omega(k_i,l_i)}\Big(\prod_{j_i=1}^{k_i}\frac{\lambda_{ij_i}^{x_{ij_i}}}{x_{ij_i}!}\Big)\sum_{\substack{\Omega(k_r,n_r)\\r=1,2,\dots,q}}\Big(\prod_{r=1}^{q}\prod_{j_r=1}^{k_r}\frac{\lambda_{rj_r}^{n_{rj_r}}}{n_{rj_r}!}\Big)(-1)^{\sum_{r=1}^{q}z_{k_r}}\\
&\hspace{5.5cm}\cdot \bigg(\frac{\mathrm{d}^{\sum_{r=1}^{q}z_{k_r}}}{\mathrm{d}\lambda^{\sum_{r=1}^{q}z_{k_r}}}\mathbb{E}(e^{-\lambda D_\alpha(t)})\bigg) (-1)^{c_{k_i}}\frac{\mathrm{d}^{c_{k_i}}}{\mathrm{d}\lambda^{c_{k_i}}}\mathbb{E}(e^{-\lambda D_\alpha(h-t)}),
\end{align*}
where $c_{k_i}=\sum_{j_i=1}^{k_i}x_{ij_i}$.
On using Lemma 3.1 of Di Crescenzo and Meoli (2022), we get
\begin{align}\label{hznum}
\mathrm{Pr}&\{M_i^\alpha(h)=n_i+l_i, \cap_{\substack{r=1\\r\ne i}}^{q}\{M_r^\alpha(h)=n_r\},\, \cap_{r=1}^{q}\{M_r^\alpha(t)=n_r\}\nonumber\\
\ \
&=\sum_{\Omega(k_i,l_i)}\Big(\prod_{j_i=1}^{k_i}\frac{\lambda_{ij_i}^{x_{ij_i}}}{x_{ij_i}!}\Big)\sum_{\substack{\Omega(k_r,n_r)\\r=1,2,\dots,q}}\Big(\prod_{r=1}^{q}\prod_{j_r=1}^{k_r}\frac{(-\lambda_{rj_r}/\lambda)^{n_{rj_r}}}{n_{rj_r}!}\Big) \sum_{x=0}^{z_{k_1}+z_{k_2}+\dots+z_{k_q}}\frac{(\lambda^\alpha t)^{x}}{x!}\nonumber\\
&\ \ \cdot \sum_{y=0}^{x}(-1)^y\binom{x}{y}(\alpha y)_{z_{k_1}+z_{k_2}+\dots+z_{k_q}}\Big(-\frac{1}{\lambda}\Big)^{c_{k_i}}\sum_{u=0}^{c_{k_i}}\frac{(\lambda^\alpha (h-t))^{u}}{u!}\sum_{v=0}^{u}(-1)^v\binom{u}{v}(\alpha v)_{c_{k_i}}e^{-h\lambda^\alpha}.
\end{align}
Finally, on substituting \eqref{hznum} in \eqref{tttmt},  we get the required result.
\end{proof}
\begin{remark}
On substituting $q=2$ and $k_1=k_2=1$ in \eqref{hzratex}, we get the hazard rates corresponding to bivariate space fractional Poisson process (see Di Crescenzo and Meoli (2022), Proposition 3.1).
\end{remark}
\begin{remark}
For $i=1,2,\dots,q$, the following results hold true:\\ 
\noindent (i) On substituting \eqref{hzratex} in \eqref{faidenex}, the failure densities are given by
\begin{align}\label{faidensity}
	h_i(t)&=\alpha \lambda^{\alpha-1}e^{-\lambda^\alpha t}\sum_{n=1}^{\infty}p_n\sum_{n-1\le \sum_{r=1}^{q}n_r\le n-k_i}\sum_{l_i=n-(\sum_{r=1}^{q}n_r)}^{k_i}\lambda_{il_i}\sum_{\substack{\Omega(k_r,n_r)\\r=1,2,\dots,q}}\Big(\prod_{r=1}^{q}\prod_{j_r=1}^{k_r}\frac{(-\lambda_{rj_r}/\lambda)^{n_{rj_r}}}{n_{rj_r}!}\Big)\nonumber\\
	&\hspace{4cm} \cdot\sum_{x=0}^{z_{k_1}+z_{k_2}+\dots+z_{k_q}}\frac{(\lambda^\alpha t)^{x}}{x!}\sum_{y=0}^{x}(-1)^y\binom{x}{y}(\alpha y)_{z_{k_1}+z_{k_2}+\dots+z_{k_q}}.
\end{align}
\noindent (ii) On substituting \eqref{alphawright} in \eqref{relfn}, the reliability function of random failure time is given by
{\small\begin{align}\label{reliabexp}
	L_T(t)&=\sum_{n=0}^{\infty}q_n\sum_{n_1+n_2+\dots+n_q=n}\sum_{\substack{\Omega(k_r,n_r)\\r=1,2,\dots,q}}
	{}_1\Psi_1\left[\begin{matrix}
		(1,\alpha)_{1,1}\\
		\Big(1-\sum_{r=1}^{q}\sum_{j_r=1}^{k_r}n_{rj_r}, \alpha\Big)_{1,1}
	\end{matrix}\Bigg| -\lambda^\alpha t\right]\prod_{r=1}^{q}\prod_{j_r=1}^{k_r}\frac{\big(-\lambda_{rj_r}/\lambda\big)^{n_{rj_r}}}{n_{rj_r}!}.
\end{align}}
Here, $\Omega(k_r,n_r)=\{(n_{r1},n_{r2},\dots,n_{rk_r}):\sum_{j_r=1}^{k_r}j_rn_{rj_r}=n_r,\,\,n_{rj_r}\in\mathbb{N}_0\}$, $\lambda=\sum_{r=1}^{q}\sum_{j_r=1}^{k_r}\lambda_{rj_r}$ and $z_{k_r}=\sum_{j_r=1}^{k_r}n_{rj_r}$.
\end{remark}
\begin{remark}
For $q=2$ and $k_1=k_2=1$, the failure densities and the reliability function given in \eqref{faidensity} and \eqref{reliabexp} reduces to corresponding results for the case of bivariate space fractional Poisson process (see Di Crescenzo and Meoli (2022), Proposition 3.2).
\end{remark}
\begin{proposition}
For $i=1,2,\dots,q$, we have
\begin{align*}
\mathrm{Pr}\{\sigma=i\}&=\frac{\alpha}{\lambda}\sum_{n=1}^{\infty}p_n\sum_{n-1\le\sum_{r=1}^{q}n_r\le n-k_i}\,\sum_{l_i=n-(\sum_{r=1}^{q}n_r)}^{k_i}\lambda_{il_i}\sum_{\substack{\Omega(k_r,n_r)\\r=1,2,\dots,q}}\Big(\prod_{r=1}^{q}\prod_{j_r=1}^{k_r}\frac{(-\lambda_{rj_r}/\lambda)^{n_{rj_r}}}{n_{rj_r}!}\Big)\nonumber\\
&\hspace{5cm} \cdot\sum_{x=0}^{z_{k_1}+z_{k_2}+\dots+z_{k_q}}\sum_{y=0}^{x}(-1)^y\binom{x}{y}(\alpha y)_{z_{k_1}+z_{k_2}+\dots+z_{k_q}},
\end{align*}
where $\Omega(k_r,n_r)=\{(n_{r1},n_{r2},\dots,n_{rk_r}):\sum_{j_r=1}^{k_r}j_rn_{rj_r}=n_r,\,\,n_{rj_r}\in\mathbb{N}_0\}$, $\lambda=\sum_{r=1}^{q}\sum_{j_r=1}^{k_r}\lambda_{rj_r}$, $z_{k_r}=\sum_{j_r=1}^{k_r}n_{rj_r}$ and $(x)_k=x(x-1)\dots (x-k+1)$ denotes the falling factorial.
\end{proposition}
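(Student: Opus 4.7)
The strategy is to integrate the failure density $h_i(t)$ given in \eqref{faidensity} against $t$ over $[0,\infty)$, using the identity $\mathrm{Pr}\{\sigma=i\}=\int_0^\infty h_i(t)\,\mathrm{d}t$ recorded in \eqref{hzsigma}. All the $t$-dependence in \eqref{faidensity} is concentrated in the prefactor $\alpha\lambda^{\alpha-1}e^{-\lambda^\alpha t}$ together with the polynomial $(\lambda^\alpha t)^x/x!$ appearing in the innermost $x$-sum; every other factor is constant in $t$.

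First I would substitute \eqref{faidensity} into \eqref{hzsigma} and invoke Fubini--Tonelli to push the $t$-integral past all the (finite or absolutely convergent) sums. Legitimacy is immediate: the $\Omega(k_r,n_r)$-constraints make the sums over $n_{rj_r}$ and over $l_i$ finite, the double sum over $(x,y)$ is finite for each configuration, and the only infinite sum (over $n$) is controlled by $\sum_{n=1}^\infty p_n\le 1$ together with the fact that $h_i(t)$ is a component of a genuine probability density.

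Next I would evaluate the only $t$-dependent integral by the substitution $u=\lambda^\alpha t$, giving
$$\alpha\lambda^{\alpha-1}\int_0^\infty e^{-\lambda^\alpha t}\,\frac{(\lambda^\alpha t)^x}{x!}\,\mathrm{d}t = \frac{\alpha}{\lambda}\cdot\frac{1}{x!}\int_0^\infty e^{-u}u^x\,\mathrm{d}u = \frac{\alpha}{\lambda}.$$
The crucial observation is that the value $\alpha/\lambda$ is \emph{independent of $x$}, so this scalar factors cleanly out of the entire $(x,y)$-double sum. Substituting this back leaves the outer sum over $n$, the constraint on $\sum_r n_r$, the sum over $l_i$, the factor $\lambda_{il_i}$, the $\Omega(k_r,n_r)$-sum with its product $\prod_r\prod_{j_r}(-\lambda_{rj_r}/\lambda)^{n_{rj_r}}/n_{rj_r}!$, and the inner double sum $\sum_{x=0}^{z_{k_1}+\cdots+z_{k_q}}\sum_{y=0}^x(-1)^y\binom{x}{y}(\alpha y)_{z_{k_1}+\cdots+z_{k_q}}$ intact, yielding precisely the stated formula.

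There is no real obstacle beyond bookkeeping: one must keep track of what carries through unchanged and notice that the $x$-independence of the Gamma integral is exactly what allows the falling-factorial double sum to survive in the stated form, with the prefactor $\alpha\lambda^{\alpha-1}$ in \eqref{faidensity} replaced by $\alpha/\lambda$ after integration. The proof is therefore a one-step computation once \eqref{faidensity} is available.
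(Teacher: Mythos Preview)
Your proposal is correct and follows essentially the same route as the paper: substitute \eqref{faidensity} into \eqref{hzsigma}, evaluate the Gamma integral $\int_0^\infty t^x e^{-\lambda^\alpha t}\,\mathrm{d}t=\Gamma(x+1)/\lambda^{\alpha(x+1)}$, and observe that after multiplying by $\alpha\lambda^{\alpha-1}\lambda^{\alpha x}/x!$ the result collapses to $\alpha/\lambda$ independently of $x$. Your Fubini justification is slightly more explicit than the paper's, but the computation is identical.
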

\begin{proof}
On substituting \eqref{faidensity} in \eqref{hzsigma}, we have
\begin{align*}
\mathrm{Pr}\{\sigma=i\}&=\alpha \lambda^{\alpha-1}\sum_{n=1}^{\infty}p_n\sum_{n-1\le \sum_{r=1}^{q}n_r\le n-k_i}\sum_{l_i=n-(\sum_{r=1}^{q}n_r)}^{k_i}\lambda_{il_i}\sum_{\substack{\Omega(k_r,n_r)\\r=1,2,\dots,q}}\Big(\prod_{r=1}^{q}\prod_{j_r=1}^{k_r}\frac{(-\lambda_{rj_r}/\lambda)^{n_{rj_r}}}{n_{rj_r}!}\Big)\nonumber\\
&\hspace{2cm} \cdot\sum_{x=0}^{z_{k_1}+z_{k_2}+\dots+z_{k_q}}\frac{\lambda^{\alpha x} }{x!}\sum_{y=0}^{x}(-1)^y\binom{x}{y}(\alpha y)_{z_{k_1}+z_{k_2}+\dots+z_{k_q}}\int_{0}^{\infty}t^{x}e^{-\lambda^\alpha t}\, \mathrm{d}t\\
&=\alpha \lambda^{\alpha-1}\sum_{n=1}^{\infty}p_n\sum_{n-1\le \sum_{r=1}^{q}n_r\le n-k_i}\sum_{l_i=n-(\sum_{r=1}^{q}n_r)}^{k_i}\lambda_{il_i}\sum_{\substack{\Omega(k_r,n_r)\\r=1,2,\dots,q}}\Big(\prod_{r=1}^{q}\prod_{j_r=1}^{k_r}\frac{(-\lambda_{rj_r}/\lambda)^{n_{rj_r}}}{n_{rj_r}!}\Big)\nonumber\\
&\hspace{2cm} \cdot\sum_{x=0}^{z_{k_1}+z_{k_2}+\dots+z_{k_q}}\frac{\lambda^{\alpha x} }{x!}\sum_{y=0}^{x}(-1)^y\binom{x}{y}(\alpha y)_{z_{k_1}+z_{k_2}+\dots+z_{k_q}}
\frac{\Gamma(x+1)}{\lambda^{\alpha x+\alpha}}
\end{align*}
which reduces to the required result.
\end{proof}
\subsection{Some particular cases of the reliability function} Here, we present four examples of the reliability function for random failure time by examining specific distributions $q_n=\mathrm{Pr}\{\mathcal{N}>n\}$, $n\geq0$, of the random threshold $\mathcal{N}$. For different choices of parameters, we include plots of the reliability function $L_T(\cdot)$ corresponding to the following cases:\\
	\noindent I. Let $\mathcal{N}$ be geometrically distributed with parameter $p$, that is, $q_n=(1-p)^n$, $0<p\le1$. Then, on substituting $q=2$, $k_1=1$ and $k_2=2$ in \eqref{reliabexp}, we get

	\begin{align}
		L_T(t)&=\sum_{n=0}^{\infty}(1-p)^n\sum_{n_1+n_{21}+2n_{22}=n}\Big(-\frac{1}{\lambda_1+\lambda_2}\Big)^{n_1+n_{21}+n_{22}}\frac{\lambda_1^{n_1}}{n_1!}\frac{\lambda_{21}^{n_{21}}}{n_{21}!}\frac{\lambda_{22}^{n_{22}}}{n_{22}!}\nonumber\\
		&\hspace{3.3cm} \cdot\sum_{r=0}^{\infty}\frac{(-(\lambda_1+\lambda_2)^\alpha t)^r}{r!}\frac{\Gamma(\alpha r+1)}{\Gamma(\alpha r-(n_1+n_{21}+n_{22})+1)}.\label{fig1}
	\end{align}	
	\noindent II. Let $\mathcal{N}$ be logarithmically distributed with parameter $p$, that is,
	$q_n=-B(p;n+1,0)/\ln(1-p)$, $0<p\le1$,
	where $B(n;x,y)$ is the incomplete beta function (see \eqref{defbeta}). Then, on substituting $q=2$, $k_1=1$ and $k_2=2$ in \eqref{reliabexp}, we get
	\begin{align}
		L_T(t)&=\frac{-1}{\ln(1-p)}\sum_{n=0}^{\infty}\int_{0}^{p}\frac{z^n}{(1-z)}\mathrm{d}z\sum_{n_1+n_{21}+2n_{22}=n}\Big(-\frac{1}{\lambda_1+\lambda_2}\Big)^{n_1+n_{21}+n_{22}}\nonumber\\
		&\hspace{1.2cm}  \cdot\frac{\lambda_1^{n_1}}{n_1!}\frac{\lambda_{21}^{n_{21}}}{n_{21}!}\frac{\lambda_{22}^{n_{22}}}{n_{22}!}\sum_{r=0}^{\infty}\frac{(-(\lambda_1+\lambda_2)^\alpha t)^r}{r!}\frac{\Gamma(\alpha r+1)}{\Gamma(\alpha r-(n_1+n_{21}+n_{22})+1)}.\label{fig2}
	\end{align}	
	
	\noindent III. Let $\mathcal{N}$ has the following distribution:
	\begin{equation*}
		q_n=\frac{\gamma(n+1;a,p)}{e^{-a}-e^{-p}},\, 0\le a < p \le 1,
	\end{equation*}
	where $\gamma(n;x,y)$ is the generalized incomplete gamma function (see \eqref{defgama}). Then, on taking $q=2$, $k_1=1$ and $k_2=2$ in \eqref{reliabexp}, we obtain
	\begin{align}
		L_T(t)&=\frac{1}{e^{-a}-e^{-p}}\sum_{n=0}^{\infty}\int_{a}^{p}e^{-z}z^n\mathrm{d}z\sum_{n_1+n_{21}+2n_{22}=n}\Big(-\frac{1}{\lambda_1+\lambda_2}\Big)^{n_1+n_{21}+n_{22}}\nonumber\\
		&\hspace{1.2cm} \cdot\frac{\lambda_1^{n_1}}{n_1!}\frac{\lambda_{21}^{n_{21}}}{n_{21}!}\frac{\lambda_{22}^{n_{22}}}{n_{22}!}\sum_{r=0}^{\infty}\frac{(-(\lambda_1+\lambda_2)^\alpha t)^r}{r!}\frac{\Gamma(\alpha r+1)}{\Gamma(\alpha r-(n_1+n_{21}+n_{22})+1)}.\label{fig3}
	\end{align}	
	\noindent IV. Let us consider the following distribution of $\mathcal{N}$ (see Di Crescenzo and Meoli (2022), Section 4): 
	\begin{equation*}
		q_n=\frac{Si(n+1;a,p)}{\cos(a)-\cos(p)},\, 0\le a < p \le 1,
	\end{equation*} 
	where   
$Si(x;p,q)=\int_{p}^{q}t^{x-1}\sin(t)\mathrm{d}t$
is the generalized sine integral.
 Then, on substituting $q=2$, $k_1=1$ and $k_2=2$ in \eqref{reliabexp}, we get
	\begin{align}
		L_T(t)&=\frac{1}{\cos(a)-\cos(p)}\sum_{n=0}^{\infty}\int_{a}^{p}z^n\sin(z)\mathrm{d}z\sum_{n_1+n_{21}+2n_{22}=n}\Big(-\frac{1}{\lambda_1+\lambda_2}\Big)^{n_1+n_{21}+n_{22}}\nonumber\\
		&\hspace{1.2cm} \cdot\frac{\lambda_1^{n_1}}{n_1!}\frac{\lambda_{21}^{n_{21}}}{n_{21}!}\frac{\lambda_{22}^{n_{22}}}{n_{22}!}\sum_{r=0}^{\infty}\frac{(-(\lambda_1+\lambda_2)^\alpha t)^r}{r!}\frac{\Gamma(\alpha r+1)}{\Gamma(\alpha r-(n_1+n_{21}+n_{22})+1)}.\label{fig4}
	\end{align}	
	\begin{figure}[htp]
		\includegraphics[width=15cm]{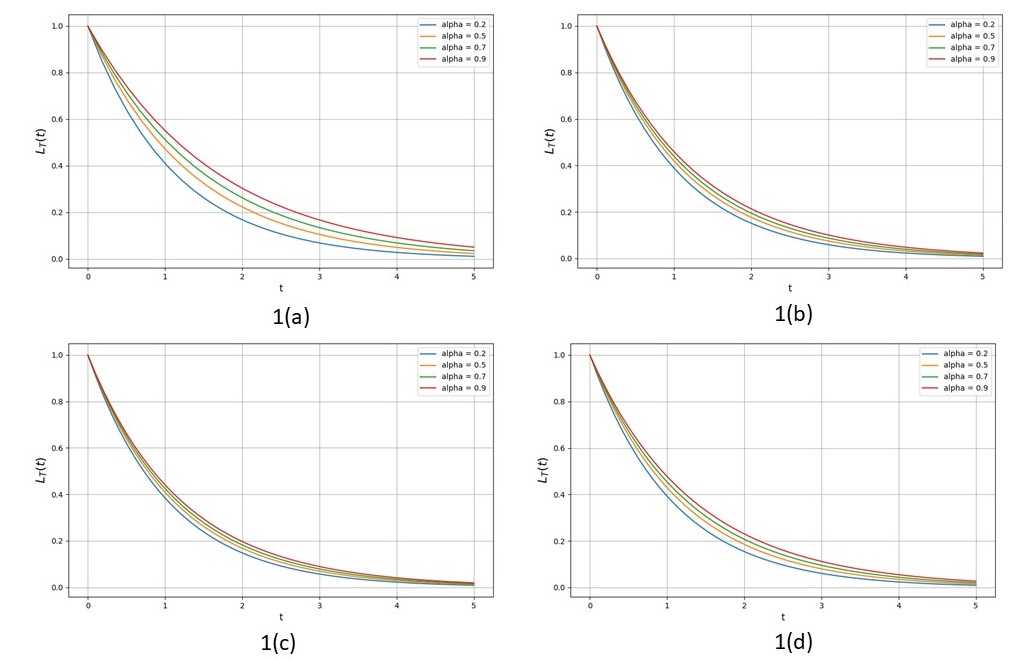}
		\vspace{-.3cm}\caption{Plots 1(a), 1(b), 1(c) and 1(d) represents the reliability function \eqref{fig1}, \eqref{fig2}, \eqref{fig3} and \eqref{fig4}, respectively for the parameters values $a=0$, $p=0.5$,  $\lambda_1=0.5$, $\lambda_{21}=0.5$, $\lambda_{22}=0.5$, that is, $\lambda_2=\lambda_{21}+\lambda_{22}=1$.}\label{slide1}
	\end{figure}
	
	\begin{figure}[htp]
		\includegraphics[width=15cm]{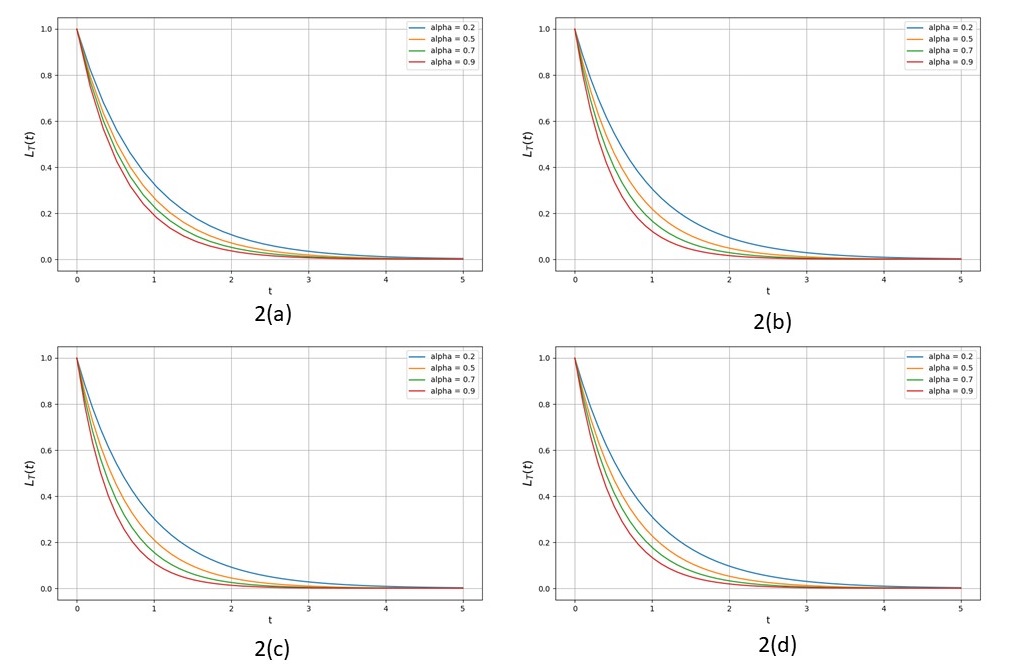}
		\vspace{-.3cm}\caption{Plots 2(a), 2(b), 2(c) and 2(d) represents the reliability function \eqref{fig1}, \eqref{fig2}, \eqref{fig3} and \eqref{fig4}, respectively  for the parameters values $a=0$, $p=0.5$, $\lambda_1=1$, $\lambda_{21}=1$, $\lambda_{22}=1$, that is,  $\lambda_2=\lambda_{21}+\lambda_{22}=2$.}\label{slide2}
	\end{figure}
	
	\begin{remark} 
		From the plots of the reliability function corresponding to the four cases discussed above, we have the following observations:
		
		\noindent(i) In Figure \ref{slide1}, we can observe that the reliability function $L_T(\cdot)$ is increasing with the increase in value of $\alpha$ for all the cases I to IV. Moreover, the distributions corresponding to \eqref{fig2}, \eqref{fig3} and \eqref{fig4} have thinner tails than that of \eqref{fig1}. \\
		\noindent(ii) In Figure \ref{slide2}, we can observe that the reliability function $L_T(\cdot)$ is decreasing with the increase in value of $\alpha$ for all the cases I to IV. Moreover, the distributions for all the four cases have thinner tails in Figure \ref{slide2} than that in Figure \ref{slide1}.
\end{remark}

\section*{Acknowledgments} 
The second author thanks Government of India for the grant of Prime Minister Research Fellowship, ID 1003066. 
The authors thank Mr. Pradeep Vishwakarma for his help in coding to plot the graphs in Figure~\ref{slide1} and Figure~\ref{slide2}.
\appendix
\renewcommand{\thesection}{\Alph{section}}
\section{}

\paragraph{A1}{\textit{Proof of Proposition \ref{prppp}}:} On using $\eqref{zzeq}$, we have
{\footnotesize	\begin{align}
		\frac{\partial}{\partial t}G^\alpha(\bar{u},t)&=-\lambda^\alpha\Big(1-\frac{1}{\lambda}\sum_{i=1}^{q}\sum_{j_i=1}^{k_i}\lambda_{ij_i}u_i^{j_i}\Big)^{\alpha } G^\alpha(\bar{u},t)\nonumber\\
		&=-\lambda^\alpha\sum_{x\geq0}\binom{\alpha }{x}\bigg(-\frac{1}{\lambda}\bigg)^x\bigg(\sum_{i=1}^{q}\sum_{j_i=1}^{k_i}\lambda_{ij_i}u_i^{j_i}\bigg)^xG^\alpha(\bar{u},t)\nonumber\\
		&=-\lambda^\alpha\sum_{x\geq0}\binom{\alpha }{x}\bigg(-\frac{1}{\lambda}\bigg)^xG^\alpha(\bar{u},t)\sum_{\substack{\sum_{i=1}^{q}x_i=x\\x_i\in\mathbb{N}_0}}x!\prod_{i=1}^{q}\frac{(\sum_{j_i=1}^{k_i}\lambda_{ij_i}u_i^{j_i})^{x_i}}{x_i!}\nonumber\\
		%		&=-\lambda^\alpha\sum_{x\geq0}\binom{\alpha }{x}\bigg(-\frac{1}{\lambda}\bigg)^xG^\alpha(\bar{u},t)\sum_{\substack{\sum_{i=1}^{q}x_i=x\\x_i\in\mathbb{N}_0}}x!\prod_{i=1}^{q}\sum_{\substack{\sum_{j_i=1}^{k_i}x_{ij_i}=x_i\\
				%				x_{ij_i\in\mathbb{N}_0}}}\prod_{j_i=1}^{k_i}\frac{(\lambda_{ij_i}u_i^{j_i})^{x_{ij_i}}}{x_{ij_i}!}\nonumber\\
		&=-\lambda^\alpha\sum_{x\geq0}\frac{\Gamma(\alpha +1)}{\Gamma(\alpha -x+1)}\bigg(-\frac{1}{\lambda}\bigg)^xG^\alpha(\bar{u},t)\sum_{\substack{\sum_{i=1}^{q}x_i=x\\x_i\in\mathbb{N}_0}}\prod_{i=1}^{q}\sum_{\substack{\sum_{j_i=1}^{k_i}x_{ij_i}=x_i\\
				x_{ij_i\in\mathbb{N}_0}}}\prod_{j_i=1}^{k_i}\frac{(\lambda_{ij_i}u_i^{j_i})^{x_{ij_i}}}{x_{ij_i}!}\nonumber\\
		&=-\lambda^\alpha\sum_{\substack{x_i\geq0\\i=1,2,\dots,q}}\frac{\Gamma(\alpha +1)}{\Gamma(\alpha -\sum_{i=1}^{q}x_i+1)}G^\alpha(\bar{u},t)\prod_{i=1}^{q}\sum_{\substack{\sum_{j_i=1}^{k_i}x_{ij_i}=x_i\\
				x_{ij_i\in\mathbb{N}_0}}}\prod_{j_i=1}^{k_i}\Big(-\frac{\lambda_{ij_i}u_i^{j_i}}{\lambda}\Big)^{x_{ij_i}}\frac{1}{x_{ij_i}!}\nonumber\\
		&=-\lambda^\alpha\sum_{\substack{m_i\geq 0\\ i=1,2,\dots,q}}\sum_{\substack{\Omega(k_i,m_i)\\i=1,2,\dots,q}}\sum_{\substack{n_i\geq 0\\ i=1,2,\dots,q}}\frac{\Gamma(\alpha +1)p^\alpha(\bar{n},t)}{\Gamma(\alpha -\sum_{i=1}^{q}\sum_{j_i=1}^{k_i}x_{ij_i}+1)}\Big(\prod_{i=1}^{q}u_i^{m_i+n_i}\Big)\prod_{i=1}^{q}\prod_{j_i=1}^{k_i}\frac{\big(-\lambda_{ij_i}/\lambda\big)^{x_{ij_i}}}{x_{ij_i}!}\nonumber\\
		&=-\lambda^\alpha\sum_{\substack{m_i\geq 0\\ i=1,2,\dots,q}}\sum_{\substack{\Omega(k_i,m_i)\\i=1,2,\dots,q}}\sum_{\substack{n_i\geq m_i\\ i=1,2,\dots,q}}\frac{\Gamma(\alpha +1)p^\alpha(\bar{n}-\bar{m},t)}{\Gamma(\alpha -\sum_{i=1}^{q}\sum_{j_i=1}^{k_i}x_{ij_i}+1)}\Big(\prod_{i=1}^{q}u_i^{n_i}\Big)\prod_{i=1}^{q}\prod_{j_i=1}^{k_i}\frac{\big(-\lambda_{ij_i}/\lambda\big)^{x_{ij_i}}}{x_{ij_i}!}\nonumber\\
		&=-\lambda^\alpha\sum_{\substack{n_i\geq 0\\ i=1,2,\dots,q}}\Big(\prod_{i=1}^{q}u_i^{n_i}\Big)\sum_{\substack{m_i\geq 0\\ i=1,2,\dots,q}}\sum_{\substack{\Omega(k_i,m_i)\\i=1,2,\dots,q}}\frac{\Gamma(\alpha +1)p^\alpha(\bar{n}-\bar{m},t)}{\Gamma(\alpha -\sum_{i=1}^{q}\sum_{j_i=1}^{k_i}x_{ij_i}+1)}\prod_{i=1}^{q}\prod_{j_i=1}^{k_i}\frac{\big(-\lambda_{ij_i}/\lambda\big)^{x_{ij_i}}}{x_{ij_i}!},\label{***}
\end{align}}
where in the last step we ignored the restriction $n_i\ge m_i$, $i=1,2,\dots,q$ because $p^\alpha(\bar{n}-\bar{m},t)=0$ when $\bar{n}\prec\bar{m}$.

Also, on taking the derivative on both sides of $G^\alpha(\bar{u},t)=\sum_{\substack{n_i\geq 0\\ 1\le i\le q}}(\prod_{i=1}^{q}u_i^{n_i})p^\alpha(\bar{n},t)$, we get
\begin{equation}\label{****}
	\frac{\partial}{\partial t}G^\alpha(\bar{u},t)=\sum_{\substack{n_i\geq 0\\i=1,2,\dots,q}}\Big(\prod_{i=1}^{q}u_i^{n_i}\Big)\frac{\mathrm{d}}{\mathrm{d}t}p^\alpha(\bar{n},t).
\end{equation}
Now, comparing the coefficients of $u_1^{n_1} u_2^{n_2}\dots u_q^{n_q}$ over the range $\bar{n}\ge\bar{0}$ on both sides of \eqref{***} and \eqref{****}, we get the required result.

\paragraph{A2}{\textit{Proof of Proposition \ref{prpalphatheta}}:}
On using \eqref{pgftempde}, we get
{\small\begin{align}\label{A1eq1}
\frac{\partial}{\partial t}G_{\bar{Z}}(\bar{u},t)&=\bigg(\theta^\alpha-\sum_{x=0}^{\infty}\binom{\alpha}{x}(\lambda+\theta)^{\alpha-x}(-1)^x\bigg(\sum_{i=1}^{q}\sum_{j_i=1}^{k_i}\lambda_{ij_i}u_i^{j_i}\bigg)^x\bigg)G_{\bar{Z}}(\bar{u},t)\nonumber\\
&=\bigg(\theta^\alpha-\sum_{x=0}^{\infty}\binom{\alpha}{x}(\lambda+\theta)^{\alpha-x}(-1)^x\sum_{\substack{\sum_{i=1}^{q}x_i=x\\x_i\in\mathbb{N}_0}}x!\prod_{i=1}^{q}\frac{(\sum_{j_i=1}^{k_i}\lambda_{ij_i}u_i^{j_i})^{x_i}}{x_i!}\bigg)G_{\bar{Z}}(\bar{u},t)\nonumber\\
&=\bigg(\theta^\alpha-\sum_{x=0}^{\infty}\binom{\alpha}{x}(\lambda+\theta)^{\alpha-x}(-1)^x\sum_{\substack{\sum_{i=1}^{q}x_i=x\\x_i\in\mathbb{N}_0}}x!\prod_{i=1}^{q}\sum_{\substack{\sum_{J_i=1}^{k_i}x_{ij_i}=x_i\\x_{ij_i}\in\mathbb{N}_0}}\prod_{j_i=1}^{k_i}\frac{(\lambda_{ij_i}u_i^{j_i})^{x_{ij_i}}}{x_{ij_i}!}\bigg)G_{\bar{Z}}(\bar{u},t)\nonumber\\
&=\bigg(\theta^\alpha-(\lambda+\theta)^{\alpha}\sum_{\substack{m_i\ge0\\i=1,2,\dots,q}}\sum_{\substack{\Omega(k_i,m_i)\\ i=1,2,\dots, q}}\binom{\alpha}{\sum_{i=1}^{q}\sum_{j_i=1}^{k_i}x_{ij_i}}\nonumber\Big(\frac{-1}{\lambda+\theta}\Big)^{\sum_{i=1}^{q}\sum_{j_i=1}^{k_i}x_{ij_i}}\\
&\hspace{7cm}\cdot\Big(\sum_{i=1}^{q}\sum_{j_i=1}^{k_i}x_{ij_i}\Big)!\prod_{i=1}^{q}u_i^{m_i}\prod_{j_i=1}^{k_i}\frac{\lambda_{ij_i}^{x_{ij_i}}}{x_{ij_i}!}\bigg)G_{\bar{Z}}(\bar{u},t)\nonumber\\
&=\sum_{\substack{n_i\ge 0\\ i=1,2,\dots,q}}\Big(\prod_{i=1}^{q}u_i^{n_i}\Big)\bigg(\theta^\alpha p_{\bar{Z}}(\bar{n},t)-(\lambda+\theta)^\alpha\Gamma(\alpha+1)\nonumber\\
&\hspace{2.8cm} \cdot \sum_{\substack{m_i\ge0\\i=1,2,\dots,q}}\sum_{\substack{\Omega(k_i,m_i)\\ i=1,2,\dots, q}}\frac{p_{\bar{Z}}(\bar{n}-\bar{m},t)}{\Gamma(\alpha+1-\sum_{i=1}^{q}\sum_{j_i=1}^{k_i}x_{ij_i})}\prod_{i=1}^{q}\prod_{j_i=1}^{k_i}\frac{(-\lambda_{ij_i}/(\lambda+\theta))^{x_{ij_i}}}{x_{ij_i}!}\bigg).
\end{align}}
Also, from \eqref{temprepr}, we have 
\begin{equation}\label{A1eq2}
\frac{\partial}{\partial t}G_{\bar{Z}}(\bar{u},t)=\sum_{\substack{n_i\ge 0\\ i=1,2,\dots,q}}\Big(\prod_{i=1}^{q}u_i^{n_i}\Big)p_{\bar{Z}}(\bar{n},t).
\end{equation}
Finally, the result follows on comparing the coefficient of $u_1^{n_1}u_2^{n_2}\dots u_q^{n_q}$ over the range $\bar{n}\ge\bar{0}$ on both sides of \eqref{A1eq1} and \eqref{A1eq2}.
\vspace{.5cm}
\paragraph{A3}{\textit{Proof of Theorem \ref{thmalphatheta}}:} On using \eqref{mnmn}, we get
\begin{align}\label{A2eq1}
G_{\bar{Z}}(\bar{u},t)&=e^{t\theta^\alpha}\sum_{r=0}^{\infty}\frac{(-\lambda^\alpha t)^r}{r!}\Big(1+\frac{\theta}{\lambda}-\frac{\sum_{i=1}^{q}\sum_{j_i=1}^{k_i}\lambda_{ij_i}u_i^{j_i}}{\lambda}\Big)^{\alpha r}\nonumber\\
&=e^{t\theta^\alpha}\sum_{r=0}^{\infty}\frac{(-t)^r}{r!}\sum_{x\ge0}\binom{\alpha r}{x}(\lambda+\theta)^{\alpha r-x}(-1)^x\sum_{\substack{\sum_{i=1}^{q}x_i=x\\x_i\in\mathbb{N}_0}}x!\prod_{i=1}^{q}\frac{(\sum_{j_i=1}^{k_i}\lambda_{ij_i}u_i^{j_i})^{x_i}}{x_i!}\nonumber\\
&=e^{t\theta^\alpha}\sum_{r=0}^{\infty}\frac{(- t)^r}{r!}\sum_{\substack{x_i\ge0\\i=1,2,\dots,q}}\binom{\alpha r}{\sum_{i=1}^{q}x_i}(\lambda+\theta)^{\alpha r}\Big(\frac{-1}{\lambda+\theta}\Big)^{\sum_{i=1}^{q}x_i}\Big(\sum_{i=1}^{q}x_i\Big)!\nonumber\\
&\hspace{7cm} \cdot\prod_{i=1}^{q}\sum_{\substack{\sum_{J_i=1}^{k_i}x_{ij_i}=x_i\\x_{ij_i}\in\mathbb{N}_0}}\prod_{j_i=1}^{k_i}\frac{(\lambda_{ij_i}u_i^{j_i})^{x_{ij_i}}}{x_{ij_i}!}\nonumber\\
&=e^{t\theta^\alpha}\sum_{r=0}^{\infty}\frac{(- t(\lambda+\theta)^{\alpha })^r}{r!}\sum_{\substack{n_i\ge0\\i=1,2,\dots,q}}\Big(\prod_{i=1}^{q}u_i^{n_i}\Big)\nonumber\\
&\hspace{2cm}\cdot\sum_{\substack{\Omega(k_i,n_i)\\ i=1,2,\dots,q}}\frac{\Gamma(\alpha r+1)}{\Gamma(\alpha r+1-\sum_{i=1}^{q}\sum_{J_i=1}^{k_i}x_{ij_i})}\prod_{i=1}^{q}\prod_{j_i=1}^{k_i}\frac{(-\lambda_{ij_i}/(\lambda+\theta))^{x_{ij_i}}}{x_{ij_i}!}.
\end{align}
Also, from \eqref{temprepr}, we can write
\begin{equation}\label{A2eq2}
G_{\bar{Z}}(\bar{u},t)=\sum_{\substack{n_i= 0\\ i=1,2,\dots,q}}\Big(\prod_{i=1}^{q}u_i^{n_i}\Big)p_{\bar{Z}}(\bar{n},t).
\end{equation}
On comparing the coefficient of $u_1^{n_1}u_2^{n_2}\dots u_q^{n_q}$ over the range $\bar{n}\ge\bar{0}$ on both sides of \eqref{A2eq1} and \eqref{A2eq2}, the result follows.
\vspace{.5cm}
\paragraph{A4}{\textit{Proof of Theorem \ref{thmappen}}:}
On using \eqref{pgfinvgau}, we get
{\small\begin{align}\label{CAA}
		G_{\bar{\mathcal{X}}}(\bar{u},t)&=e^{\delta \gamma t}\sum_{r=0}^{\infty}\frac{(-\sqrt{2\lambda}\delta t)^r}{r!}\bigg(1-\frac{1}{\lambda}\sum_{i=1}^{q}\sum_{j_i=1}^{k_i}\lambda_{ij_i}u_i^{j_i}\bigg)^{r/2}\nonumber\\
		&=e^{\delta \gamma t}\sum_{r=0}^{\infty}\frac{(-\sqrt{2\lambda}\delta t)^r}{r!}\sum_{x=0}^{\infty}\Big(\frac{r}{2}\Big)_{x}\Big(\frac{-1}{\lambda}\Big)^x\sum_{\substack{\sum_{i=1}^{q}x_i=x\\x_i\in\mathbb{N}_0}}\prod_{i=1}^{q}\frac{(\sum_{j_i=1}^{k_i}\lambda_{ij_i}u_i^{j_i})^{x_i}}{x_i!}\nonumber\\
		&=e^{\delta \gamma t}\sum_{r=0}^{\infty}\frac{(-\sqrt{2\lambda}\delta t)^r}{r!}\sum_{\substack{x_i\ge0\\ i=1,2,\dots,q}}\Big(\frac{r}{2}\Big)_{\sum_{i=1}^{q}x_i}\Big(\frac{-1}{\lambda}\Big)^{\sum_{i=1}^{q}x_i}\prod_{i=1}^{q}\sum_{\substack{\sum_{J_i=1}^{k_i}x_{ij_i}=x_i\\x_{ij_i}\in\mathbb{N}_0}}\prod_{j_i=1}^{k_i}\frac{(\lambda_{ij_i}u_{i}^{j_i})^{x_{ij_i}}}{x_{ij_i}!}\nonumber\\
		&=e^{\delta \gamma t}\sum_{r=0}^{\infty}\frac{(-\sqrt{2\lambda}\delta t)^r}{r!}\sum_{\substack{n_i\ge0\\i=1,2,\dots,q}}\sum_{\substack{\Omega(k_i,n_i)\\ i=1,2,\dots,q}}\Big(\frac{r}{2}\Big)_{\sum_{i=1}^{q}\sum_{j_i=1}^{k_i}x_{ij_i}}\Big(\frac{-1}{\lambda}\Big)^{\sum_{i=1}^{q}\sum_{j_i=1}^{k_i}x_{ij_i}}\prod_{i=1}^{q}\prod_{j_i=1}^{k_i}\frac{\lambda_{ij_i}^{x_{ij_i}}u_i^{n_i}}{x_{ij_i}!}.
\end{align}}
Also, on using \eqref{invgaudef}, we have
\begin{equation}\label{CAAC}
	G_{\bar{\mathcal{X}}}(\bar{u},t)=\sum_{\substack{n_i\ge0\\i=1,2,\dots,q}}\Big(\prod_{i=1}^{q}u_i^{n_i}\Big)p_{\bar{\mathcal{X}}}(\bar{n},t).
\end{equation}
Finally, on comparing the coefficient of $u_1^{n_1}u_2^{n_2}\dots u_q^{n_q}$ over the range $\bar{n}\ge \bar{0}$ in \eqref{CAA} and \eqref{CAAC}, the required result follows.

\end{document}